\newtheorem{thm}{Theorem}[section]
\newtheorem{cor}[thm]{Corollary}
\newtheorem{lemma}[thm]{Lemma}
\newtheorem{remark}[thm]{Remark}
\newtheorem{theorem}{Theorem}
\newtheorem{quest}[thm]{Question}
\newtheorem{conjecture}[thm]{Conjecture}
\numberwithin{equation}{section}
\def\R{\mathbb R}
\def\C{\mathbb C}
\def\N{\mathbb N}
\def\D{\mathbb D}
\def\H{\mathcal H}
\def\G{\mathcal G}
\def\diam{\text{diam}}
\def\ds{\displaystyle}
\title{Sharp Examples for Planar Quasiconformal Distortion of Hausdorff Measures and Removability}
\author{
%\textit{K. Astala}\and \textit{A. Clop}\and \textit{J. Mateu} \and \textit{J. Orobitg} \and 
\textit{I. Uriarte-Tuero}
\thanks{{
%Astala was supported in part by the Academy of Finland, projects 34082 and 41933. Clop, Mateu, Orobitg were supported by projects MTM2004-00519, HF2004-0208, 2005-SGR-00774. 
Uriarte-Tuero is a postdoctoral fellow in the Department of Mathematics of the University of Missouri-Columbia}\newline
\newline AMS (2000) Classification. Primary 30C62, 35J15, 35J70
\newline Keywords   Quasiconformal, Hausdorff measure, Removability}}
\begin{document}

\maketitle

\begin{abstract}

%VERY PRELIMINARY VERSION OF THE PAPER. PLEASE DO NOT CIRCULATE.

In the celebrated paper \cite{astalaareadistortion}, Astala showed optimal area distortion bounds and dimension distortion estimates for planar quasiconformal mappings. He asked (Question 4.4) whether a finer result held, namely absolute continuity of Hausdorff measures under push-forward by quasiconformal mappings. This was proven in one particular case relevant for removability questions, in joint work of Astala, Clop, Mateu, Orobitg and the author \cite{astalaclopmateuorobitguriartepreprint} (Theorem 1.1), the other cases remaining open. A related question that we left open in \cite{astalaclopmateuorobitguriartepreprint} (Question 4.2) (which was asked by Astala to the author before \cite{astalaclopmateuorobitguriartepreprint} in an equivalent form \cite{astalapersonalcommunication}) is whether BMO removability for $K$-quasiregular mappings and ($L^{\infty}$) removability for $K$-quasiregular mappings are indeed different problems.

In this paper we give a series of examples answering in the positive Question 4.2 in \cite{astalaclopmateuorobitguriartepreprint}, at the same time proving sharpness in two different senses of Theorem 1.1 in \cite{astalaclopmateuorobitguriartepreprint}, and also giving examples that would yield sharpness in those two different senses as well for the absolute continuity of Hausdorff measures under push-forward by quasiconformal mappings, were it to be proven.
\end{abstract}

\section{Introduction}

An orientation preserving homeomorphism $\phi:\Omega\rightarrow\Omega'$ between planar domains $\Omega,\Omega' \subset \C$ is called {\it{$K$-quasiconformal}} if  it belongs to  the Sobolev space $W^{1,2}_{loc}(\Omega)$ and satisfies the {\it{distortion inequality}}
\begin{equation}\label{distortioninequality}
\max_\alpha|\partial_\alpha\phi|\leq K\min_\alpha|\partial_\alpha\phi| \,\,\,\,\,\text{a.e. in }\Omega
\end{equation}
Quasiconformal mappings preserve sets of zero Lebesgue measure (See the work of Ahlfors \cite{ahlfors}.) They also preserve sets of zero Hausdorff dimension, since $K$-quasiconformal mappings are H\"older continuous with exponent $1/K$, see \cite{mori}. However, these maps do not preserve Hausdorff dimension in general, and  it was in the celebrated paper \cite{astalaareadistortion} where the precise dimension distortion bounds were given. Namely, for any compact set $E$ with dimension $t$ and for any $K$-quasiconformal mapping $\phi$ we have
\begin{equation}\label{distortionofdimension}
\frac{1}{K}\left(\frac{1}{t}-\frac{1}{2}\right)\leq\frac{1}{\dim(\phi(E))}-\frac{1}{2}\leq K\left(\frac{1}{t}-\frac{1}{2}\right)
\end{equation}
These bounds are optimal, i.e. equality may occur in either estimate.

A finer question fundamental to the understanding of size distortion by quasiconformal mappings was raised in \cite{astalaareadistortion} (Question 4.4.): whether the estimates (\ref{distortionofdimension}) can be improved to the level of Hausdorff measures $\H^t$. In other words, if $\phi$ is a planar $K$-quasiconformal mapping, $0<t<2$ and $t'=\frac{2Kt}{2+(K-1)t}$, the question is whether it is  true that
\begin{equation}\label{abscont}
\H^t(E)=0\,\,\,\Longrightarrow\,\,\,\H^{t'}(\phi(E))=0,
\end{equation}
or equivalently, $\phi^\ast\H^{t'}\ll\H^t$. The above classical results of Ahlfors and Mori assert that this is true when $t=0$ or $t=2$. For the Lebesgue measure one has even precise quantitative bounds  $$|\phi(E)|\leq C\,|E|^\frac{1}{K}, $$
which also lead to the sharp Sobolev regularity,  $\phi\in W^{1,p}_{loc}(\C)$ for every  $p<\frac{2K}{K-1}$ (see \cite{astalaareadistortion}.)

Two important results towards \eqref{abscont} and related questions were given in \cite{astalaclopmateuorobitguriartepreprint} (which we have also used as a source for some parts of this paper.) Namely, 

\begin{theorem}\label{theorem1.1inACMOU}\textbf{[Theorem 1.1 in \cite{astalaclopmateuorobitguriartepreprint}]}
Let $\phi$ be a planar $K$-quasiconformal mapping, and let $E$ be a compact set. Then,
\begin{equation}\label{festimate}
\H^\frac{2}{K+1}(E)=0\,\,\,\Longrightarrow\,\,\,\H^{1}(\phi(E))=0
\end{equation}
\end{theorem}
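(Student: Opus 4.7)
The plan is to argue by contradiction: suppose $\mathcal{H}^1(\phi(E)) > 0$, and use this to build a nontrivial bounded $K$-quasiregular map off $E$, which will contradict an $\mathcal{H}^{2/(K+1)}$-removability result. The route goes through analytic capacity and the sharp Sobolev regularity of $\phi$.

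First, I would reduce the problem using Stoilow factorization, writing $\phi = F\circ\psi$ with $\psi$ a principal $K$-quasiconformal map conformal outside a large disk and $F$ entire conformal, so only the behaviour of $\psi$ on $E$ matters. The next step produces an analytic witness on the image side: if $\mathcal{H}^1(\phi(E))>0$, Frostman's lemma yields a positive Radon measure $\mu$ on $\phi(E)$ with linear growth, and by Tolsa's theorem on the semiadditivity and comparability of analytic capacity $\gamma$ with $\gamma_+$, $\phi(E)$ supports a nontrivial bounded Cauchy potential $f$ that is analytic and nonconstant on $\C\setminus\phi(E)$.

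Second, I would pull this witness back through $\phi$ and estimate its regularity. Setting $g = f\circ\phi$, one obtains a bounded $K$-quasiregular function on $\C\setminus E$. Astala's sharp area distortion theorem gives $\phi\in W^{1,p}_{loc}$ for every $p<\frac{2K}{K-1}$; combined with the chain rule and the boundedness of $\nabla f$ away from $\phi(E)$, this should place $g$ in $W^{1,q}_{loc}$ for $q$ in a range arbitrarily close to the critical exponent $\frac{2K}{K-1}$. Here the precise integrability of the Jacobian of $\phi$ against powers of $r$ (the weighted area distortion estimates refining \eqref{distortionofdimension}) is essential to get $g$ into the right function space.

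Third, I would invoke the matching removability statement: compact sets with $\mathcal{H}^{2/(K+1)}(E)=0$ are removable for bounded $K$-quasiregular mappings in the Sobolev class just constructed (a quasiconformal analogue of Painlev\'e's theorem, at the endpoint exponent $t = 2/(K+1)$ for which the image dimension is $1$). Removability would force $g$ to extend as a bounded entire $K$-quasiregular map, hence constant by Liouville, contradicting the nonconstancy of $f$ produced in the first step.

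The main obstacle is the endpoint nature of the statement. Both ingredients---the existence of a bounded Cauchy potential on a set of positive $\mathcal{H}^1$ and the $K$-quasiregular removability of $\mathcal{H}^{2/(K+1)}$-null sets---are genuinely sharp: the former requires Tolsa's deep solution of the Painlev\'e/Vitushkin problem (a mere Frostman measure does not give a bounded analytic function), and the latter requires matching the Sobolev regularity of $\phi$ \emph{exactly} to the exponent $\frac{2K}{K-1}$, with no margin to spare. Threading these two sharp results together so that the pulled-back map $g$ lands in precisely the class where removability holds is the delicate technical heart of the proof.
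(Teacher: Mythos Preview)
This theorem is not proved in the present paper; it is quoted from \cite{astalaclopmateuorobitguriartepreprint} as background. So there is no proof here to compare against, and the paper explicitly records the logical direction: the distortion estimate (this theorem) together with analytic-capacity results is what \emph{yields} the $K$-quasiregular removability statement (Theorem~\ref{theorems1.2and4.3inACMOU}), not the reverse.

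That logical order exposes two genuine gaps in your plan. First, step~2 is false as stated: $\mathcal{H}^1(\phi(E))>0$ does \emph{not} imply positive analytic capacity. A Frostman measure with linear growth is necessary but not sufficient for $\gamma_+>0$; one also needs bounded Menger curvature (equivalently, $L^2$-boundedness of the Cauchy transform). Purely unrectifiable $1$-sets have positive $\mathcal{H}^1$ yet $\gamma=0$ by David's theorem, so no nonconstant bounded analytic function off $\phi(E)$ need exist. Tolsa's comparability $\gamma\approx\gamma_+$ does not help here---it goes the other way.

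Second, step~4 is circular. The removability of $\mathcal{H}^{2/(K+1)}$-null sets for bounded $K$-quasiregular maps (Theorem~\ref{theorems1.2and4.3inACMOU}) is \emph{derived from} the very statement you are trying to prove, via Stoilow factorization $g=h\circ\phi$: one pushes $E$ forward by $\phi$, uses the present theorem to get $\mathcal{H}^1(\phi(E))=0$, and then applies Painlev\'e to the analytic factor $h$. Invoking removability to prove the distortion estimate reverses cause and effect. The actual argument in \cite{astalaclopmateuorobitguriartepreprint} is a direct quantitative distortion bound for $\phi$, not a contradiction through removability.
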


which proves \eqref{abscont} for $t=\frac{2}{K+1}$, i.e. for image dimension  $t'=1$, and also the related 

\begin{theorem}\label{theorem2.5inACMOU}\textbf{[Theorem 2.5 in \cite{astalaclopmateuorobitguriartepreprint}]}
Let $E\subset\C$ be a compact set, and $\phi:\C\rightarrow\C$ a $K$-quasiconformal mapping. If $\, \H^\frac{2}{K+1}(E)$ is finite (or even $\sigma$-finite), then $\H^1(\phi(E))$ is $\sigma$-finite.
\end{theorem}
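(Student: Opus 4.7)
The plan is to reduce Theorem \ref{theorem2.5inACMOU} to the null-case result already in hand (Theorem \ref{theorem1.1inACMOU}) by decomposing $E$ according to the upper density of its Hausdorff measure, and then to control the resulting Frostman-regular pieces by a covering argument.

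First, since $\H^{2/(K+1)}|_E$ is $\sigma$-finite, I may write $E = \bigcup_n E_n$ with $\H^{2/(K+1)}(E_n) < \infty$; it suffices to treat each piece separately, so I assume that $\mu := \H^{2/(K+1)}|_E$ is a finite Borel measure. Next, I invoke the classical density theorem for Hausdorff measures, namely that the upper $2/(K+1)$-density of $\mu$ is finite at $\mu$-a.e.\ point. Decompose $E = N \cup \bigcup_{m \ge 1} E_m$, where
\[
E_m = \{x \in E : \mu(B(x,r)) \le m\, r^{2/(K+1)} \text{ for all } 0 < r < 1/m\},
\]
and $N = E \setminus \bigcup_m E_m$ satisfies $\H^{2/(K+1)}(N) = 0$. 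Theorem \ref{theorem1.1inACMOU} then gives $\H^1(\phi(N)) = 0$, which disposes of the null part of the decomposition.

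It remains to show $\H^1(\phi(E_m)) < \infty$ for each fixed $m$. Cover $E_m$ efficiently by disks $B(x_i, r_i)$ centered at points $x_i \in E_m$, with $r_i < 1/m$ and $\sum_i r_i^{2/(K+1)} \le \H^{2/(K+1)}_\infty(E_m) + 1$. A quantitative distortion bound of the form
\[
\diam\bigl(\phi(B(x_i, r_i))\bigr) \le C(K,m)\, r_i
\]
for disks on which $\mu$ has the Frostman growth $\mu(B(x,r)) \le m\, r^{2/(K+1)}$ would then combine with the elementary inequality $r_i \le r_i^{2/(K+1)}$ (valid since $r_i \le 1$ and $2/(K+1) \le 1$) to give
\[
\H^1_{\delta}(\phi(E_m)) \le C \sum_i r_i \le C \sum_i r_i^{2/(K+1)} \le C',
\]
for $\delta$ small enough; letting $\delta \to 0$ yields the desired finiteness, and summing over $m$ together with Theorem \ref{theorem1.1inACMOU} on $N$ produces the required $\sigma$-finiteness of $\H^1$ on $\phi(E)$.

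The main obstacle is precisely this diameter bound. The classical H\"older estimate gives only $\diam(\phi(B(x,r))) \le C r^{1/K}$, which is insufficient because $1/K < 2/(K+1)$ when $K > 1$, so summing over the efficient cover diverges. What is needed is a refinement produced by the weighted Beurling-transform and holomorphic-motion machinery that underlies the proof of Theorem \ref{theorem1.1inACMOU}, reformulated so that the Frostman growth of $\mu$ on $E_m$ upgrades the diameter exponent from $1/K$ to $1$. Extracting this quantitative, disk-level estimate---that is, turning the qualitative null-preservation statement of Theorem \ref{theorem1.1inACMOU} into a usable bound on Frostman-regular disks---is the crux of the argument; once it is in hand, the covering computation above is routine.
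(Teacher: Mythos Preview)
This theorem is not proved in the present paper at all: it is merely quoted from \cite{astalaclopmateuorobitguriartepreprint} as background, so there is no ``paper's own proof'' here to compare against. That said, your proposal has a genuine gap beyond the one you acknowledge. The disk-level estimate you are hoping for,
\[
\diam\bigl(\phi(B(x_i,r_i))\bigr)\le C(K,m)\,r_i,
\]
is not just hard to extract---it is false. Take the radial stretch $\phi(z)=|z|^{1/K-1}z$ and any set $E$ of finite $\H^{2/(K+1)}$-measure accumulating at the origin with bounded upper density there; the ball $B(0,r)$ is sent to $B(0,r^{1/K})$, so $\diam\phi(B(0,r))\asymp r^{1/K}$ regardless of the Frostman regularity of $\mu$ at $0$. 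A Frostman bound on $\mu$ constrains the measure of $E$ inside a ball, not the geometric distortion of the ball itself, and a single $K$-quasiconformal map can be as non-Lipschitz as the H\"older exponent $1/K$ allows on any fixed disk.

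The argument in \cite{astalaclopmateuorobitguriartepreprint} does not attempt any such pointwise diameter control. It works instead with \emph{collective} estimates over finite families of disjoint disks: one factors $\phi$ through an auxiliary principal $K$-quasiconformal map whose dilatation is supported on the disks, and then uses quantitative bounds on the Beurling transform (ultimately Astala's area distortion machinery) to control $\sum_i \diam(\phi(B_i))$ in terms of $\sum_i \diam(B_i)^{2/(K+1)}$. The gain comes from the interaction among the disks, not from any improvement on a single one. Your density decomposition $E=N\cup\bigcup_m E_m$ is a reasonable opening move, but the covering step after it has to be rebuilt around a summed inequality of this type rather than a per-disk Lipschitz bound.
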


These two theorems have important applications to removability questions for quasiregular mappings. (See Theorem \ref{theorems1.2and4.3inACMOU} below.)

It is worth noting that a positive answer to Question 2.4 in \cite{astalaclopmateuorobitguriartepreprint} would prove \eqref{abscont}. However, this question has a negative answer, as was shown by a counterexample of Bishop \cite{bishopdistortionofdisksbyconformalmaps}.

Recall that an orientation preserving $f$ is a {\it{$K$-quasiregular mapping}} in a domain $\Omega\subset\C$ if  $f\in W^{1,2}_{loc}(\Omega)$ and $f$ satisfies the distortion inequality (\ref{distortioninequality}). When $K=1$, this class agrees with the class of analytic functions on $\Omega$. The classical {\it{Painlev\'e problem}} consists of giving metric and geometric characterizations of those sets $E$ that are removable for bounded analytic functions. Painlev\'e's theorem tells us that sets with $\H^1(E)=0$ (zero length) are removable, while Ahlfors \cite{ahlforsboundedanalyticfunctions} showed that no set of Hausdorff dimension  $> \;1$ has this property. In dimension $1$ the question is quite delicate. For the related $BMO$-problem (i.e. changing ``bounded" by BMO in the previous problem), Kaufman \cite{kaufman} and Kr\'{a}l \cite{kral} proved that the condition $\H^1(E)=0$ is a precise characterization for removable singularities of  $BMO$ analytic functions. Thus for analytic removability, dimension $1$ is the critical point both for  $L^\infty$ and  $BMO$. However, the solution to the original Painlev\'e problem lies much deeper and was only recently achieved by Tolsa (\cite{tolsasemiadditivityanalyticcapacity},\cite{tolsabilipschitzinvarianceanalyticcapacity}) in terms of curvatures of measures. Under the assumption that $\H^1(E)$ is finite, Painlev\'e's problem was earlier solved by G. David \cite{davidunrectifiable1setszeroanalyticcapacity}, who showed that a set $E$ with $0< \H^1(E)< \infty$ is removable for bounded analytic functions if it is purely unrectifiable. (The converse direction is due to Garabedian and Calder\'{o}n, see \cite{calderonlipschitzcurves}, \cite{garabedian}.) The countable semiadditivity of analytic capacity, due to Tolsa \cite{tolsasemiadditivityanalyticcapacity}, implies that this result remains true if we only assume $\H^1(E)$ to be $\sigma$-finite.

It is natural to consider the Painlev\'e problem for $K$-quasiregular mappings. Following \cite{astalaclopmateuorobitguriartepreprint}, we say that a compact set $E$ is {\it{removable for bounded $K$-quasiregular mappings}}, or simply {\it{$K$-removable}}, if for every open set $\Omega\supset E$, every $K$-quasiregular mapping $f:\Omega\setminus E\rightarrow\C$, with $f \in L^\infty(\Omega)$, admits a $K$-quasiregular extension to $\Omega$. In this definition, as in the analytic setting, we may replace $L^\infty(\Omega)$ by $BMO(\Omega)$ to get a close variant of the problem. We will refer to these two problems as {\it{$L^\infty$ $K$-removability}} and {\it{$BMO$ $K$-removability}}.

The critical dimension in both the $L^\infty$ and $BMO$ $K$-quasiregular removability problems is $\frac{2}{K+1}$. This is determined by the sharpness of the bounds in equation \eqref{distortionofdimension}. In fact, Iwaniec and Martin previously conjectured \cite{iwaniecmartinquasiregularevendimensions}  that in $\R^n$, $n \geq 2$, sets with Hausdorff measure  $\H^\frac{n}{K+1}(E)=0$ are removable for bounded $K$-quasiregular mappings. A positive answer for $n=2$ was described in \cite{astalaiwaniecmartin}. In \cite{astalaclopmateuorobitguriartepreprint} a stronger result is proven:

\begin{theorem}\label{theorems1.2and4.3inACMOU}\textbf{[Theorems 1.2 and 4.3 in \cite{astalaclopmateuorobitguriartepreprint}]}
Let $E$ be a compact set in the plane, and let $K>1$. Assume that $\H^\frac{2}{K+1}(E)$ is $\sigma$-finite. Then $E$ is removable for all bounded $K$-quasiregular mappings.

In particular, for any $K$-quasiconformal mapping $\phi$ the image $\phi(E)$ is purely unrectifiable.
\end{theorem}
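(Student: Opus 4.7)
The plan follows the standard three-step scheme for quasiregular removability: Stoilow factorization to pass to the analytic setting on the image, Theorem~\ref{theorem2.5inACMOU} to obtain $\sigma$-finite $\H^1$ on the image, and the David--Tolsa theorem to reduce analytic removability to pure unrectifiability. The heart of the matter, and the main obstacle, is establishing that pure unrectifiability.

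Let $f$ be bounded and $K$-quasiregular on $\Omega\setminus E$. By Stoilow's theorem there is a factorization $f=g\circ\phi$, with $\phi:\C\to\C$ a $K$-quasiconformal homeomorphism and $g$ bounded analytic on $\phi(\Omega)\setminus\phi(E)$. Extending $f$ $K$-quasiregularly across $E$ is thus equivalent to extending $g$ analytically across $\phi(E)$, so it suffices to prove $\phi(E)$ is removable for bounded analytic functions. By Theorem~\ref{theorem2.5inACMOU}, $\H^1(\phi(E))$ is $\sigma$-finite. David's theorem \cite{davidunrectifiable1setszeroanalyticcapacity} characterises analytic removability for compact sets of finite $\H^1$-measure as pure unrectifiability, and Tolsa's countable semiadditivity of analytic capacity \cite{tolsasemiadditivityanalyticcapacity} promotes this equivalence to the $\sigma$-finite $\H^1$ setting. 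The problem therefore reduces to showing that $\phi(E)$ is purely unrectifiable.

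For pure unrectifiability, suppose for contradiction that $\phi(E)$ contains a compact rectifiable $R$ with $\H^1(R)>0$. Since $\H^{2/(K+1)}(E)$ is $\sigma$-finite and $2/(K+1)<1$, the Hausdorff dimension of $E$ does not exceed $2/(K+1)<1$, so $\H^1(E)=0$; it thus suffices to produce a subset of $E$ with positive $\H^1$. The plan is to show $\H^1(\phi^{-1}(R))>0$. By the Besicovitch--Federer structure theorem I may assume $R$ lies on a Lipschitz graph $\Gamma=\{(x,\gamma(x))\}$, and the explicit ambient bi-Lipschitz straightening $L(x,y)=(x,y-\gamma(x))$ maps $\Gamma$ to the $x$-axis while keeping $L\circ\phi$ quasiconformal (with a possibly larger constant, irrelevant to the dimension bound on $E$). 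The quasiconformal inverse $(L\circ\phi)^{-1}$ is absolutely continuous along almost every horizontal line; combined with a Fubini/approximation argument and the fact that the Jacobian is positive a.e., this yields $\H^1(\phi^{-1}(R))>0$, the required contradiction. Making this last step rigorous for the particular straightened line containing $L(R)$ (where ACL a priori only holds for a.e.\ line of the foliation) is the main technical obstacle, and it is here that one invokes Theorem~\ref{theorem1.1inACMOU} or a closely related finer estimate.

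Finally, the ``in particular'' assertion on pure unrectifiability of $\phi(E)$ is immediate from the argument above; alternatively, it follows directly from the removability statement, since if $\phi(E)$ carried a rectifiable subset of positive length, its positive analytic capacity would supply a nonconstant bounded analytic $g$ on $\C\setminus\phi(E)$, whose composition $g\circ\phi$ is a nonconstant bounded $K$-quasiregular function on $\C\setminus E$ that would extend to an entire $K$-quasiregular function by removability, contradicting the Liouville theorem for quasiregular mappings.
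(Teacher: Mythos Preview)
Your overall framework matches the paper's one-sentence sketch (Stoilow factorization, then Theorem~\ref{theorem2.5inACMOU} for $\sigma$-finite $\H^1$ on the image, then the David--Tolsa analytic capacity results), and the paper itself does not give a full proof but cites \cite{astalaclopmateuorobitguriartepreprint}. So on the level of strategy you are aligned with what is stated here.

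The genuine gap is in the pure unrectifiability step, and it is more serious than you indicate. Your plan is to deduce $\H^1(\phi^{-1}(R))>0$ from $\H^1(R)>0$ and rectifiability of $R$, and then contradict $\H^1(E)=0$. But this intermediate claim is \emph{false} for general $K$-quasiconformal maps: the Beurling--Ahlfors extension of a singular quasisymmetric self-map of $\R$ gives a $K$-quasiconformal $\psi$ and a set $R\subset\R$ (hence rectifiable) with $\H^1(R)>0$ yet $\H^1(\psi^{-1}(R))=0$. So the ACL-on-a.e.-line issue is not a mere technicality to be patched by an approximation argument or by invoking Theorem~\ref{theorem1.1inACMOU}; the target statement itself fails without bringing in the hypothesis on $E$ in a much more substantial way. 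The proof in \cite{astalaclopmateuorobitguriartepreprint} of the pure unrectifiability does not go through $\H^1(\phi^{-1}(R))>0$; it relies on finer tools (sharp integrability of the derivative and related weighted estimates for the Beurling transform) that are not reproduced in the present paper.

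Your final paragraph, deriving pure unrectifiability of $\phi(E)$ from $K$-removability of $E$ via analytic capacity and Liouville, is correct and is indeed the easy ``in particular'' direction. But since your proof of removability already presupposes pure unrectifiability of $\phi(E)$, this alternative does not help you avoid establishing it directly.
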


Notice that the situation is somewhat different when $K = 1$, since for instance the line segment $E = [0,1]$  is not removable.

For  the converse direction towards showing that $\frac{2}{K+1}$ is the critical dimension in both the $L^\infty$ and $BMO$ $K$-quasiregular removability problems, Astala \cite{astalaareadistortion} found for every  $t> \frac{2}{K+1}$ non-$K$-removable sets with $\dim(E) = t$. In \cite{astalaclopmateuorobitguriartepreprint} the following is proven:

\begin{theorem}\label{theorem5.1inACMOU}\textbf{[Theorem 5.1 in \cite{astalaclopmateuorobitguriartepreprint}]}
There are compact sets with dimension precisely equal to $\frac{2}{K+1}$ yet not removable for some bounded $K$-quasiregular mappings.
\end{theorem}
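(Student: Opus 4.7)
The plan is to transfer non-removability from a fat Cantor set on the real line to a compact set of borderline dimension $2/(K+1)$ via a $K$-quasiconformal substitution. Fix $F \subset [0,1] \subset \R \subset \C$ a compact set of positive one-dimensional Hausdorff measure (a ``fat'' Cantor set). Classical results (Ahlfors, together with $\H^1(F) > 0$) guarantee that $F$ is non-removable for bounded analytic functions; concretely, the Cauchy transform of $\H^1|_F$ is a nonconstant bounded analytic function $g$ on $\C \setminus F$ admitting no analytic extension across $F$. If I can produce a $K$-quasiconformal $\phi : \C \to \C$ and a compact $E \subset \C$ with $\phi(E) = F$ and $\dim E = 2/(K+1)$, then $g \circ \phi$ is bounded and $K$-quasiregular on $\C \setminus E$, and any $K$-quasiregular extension across $E$ would pull back via the $K$-quasiconformal $\phi^{-1}$ to an analytic extension of $g$ across $F$, contradicting the choice of $g$. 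Thus the theorem reduces to producing the pair $(\phi, E)$.

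To build $(\phi, E)$, I would design the principal solution $\phi$ of a Beltrami equation $\bar\partial \phi = \mu \, \partial \phi$ with a carefully engineered coefficient $\mu$: supported in $\C \setminus \R$, of modulus exactly $(K-1)/(K+1)$ on its support, and self-similar around each of the stage-$n$ building intervals $I^{(n)}_j$ of a nested construction realising $F$. Concretely, around each $I^{(n)}_j$ place an annular shell in which $\mu$ is radial with modulus $(K-1)/(K+1)$, so that the inverse map compresses lengths in a radial fashion. Iterating across all scales, $\phi^{-1}$ compresses $F$ to a Cantor-type set $E \subset \C$ whose stage-$n$ covering consists of $\asymp h_n^{-1}$ squares of side
\begin{equation*}
s_n \;=\; h_n^{(K+1)/2} \, c_n,
\end{equation*}
where $h_n$ is the stage-$n$ covering scale of $F$ and $c_n$ is a slowly varying correction. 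Compactness of $K$-quasiconformal principal solutions guarantees that the compositional limit of the level-by-level maps is a global $K$-quasiconformal map $\phi$ with $\phi(E) = F$.

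The exponent $(K+1)/2$ is dictated by sharpness in Astala's area-distortion theorem, and the main obstacle is a tightrope-walking balance: a self-similar choice $c_n \equiv c > 0$ produces $\dim E$ strictly greater than $2/(K+1)$, while any overly fast decay of $c_n$ pushes $\dim E$ strictly below $2/(K+1)$. The hard task is to choose $c_n$ to decay just slowly enough (e.g.\ like an inverse power of $\log n$) so that $\dim E = 2/(K+1)$ and $\H^{2/(K+1)}(E)$ is non-$\sigma$-finite, this latter property being forced by Theorem \ref{theorems1.2and4.3inACMOU} for any non-removable $E$. An upper bound $\dim E \leq 2/(K+1)$ will follow from counting the stage-$n$ squares against the scale $s_n^{2/(K+1)}$, while the matching lower bound will come from pushing $\H^1|_F$ forward through $\phi^{-1}$ to obtain a Frostman-type measure $\nu_E$ on $E$ satisfying $\nu_E(B(x,r)) \lesssim r^{2/(K+1)} (\log(1/r))^{-\beta}$ for some $\beta > 0$. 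Calibrating the logarithmic correction so both bounds hold simultaneously, uniformly across all scales at the extremal rate of area distortion, is the delicate technical heart of the construction; once it is in place, the transfer argument of the first paragraph completes the proof.
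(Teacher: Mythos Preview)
This theorem is cited from \cite{astalaclopmateuorobitguriartepreprint} rather than proved in the present paper; the constructions in Sections~\ref{BasicConstruction}--\ref{ConstructionForGeneralizedHausdorffMeasures} are described as generalizations of that cited proof, and the paper only remarks in passing that Theorem~\ref{theorem5.1inACMOU} can be recovered from them. Your high-level mechanism---produce a $K$-quasiconformal $\phi$ carrying a set $E$ of dimension $2/(K+1)$ onto a set of positive analytic capacity, then pull back a non-extendable bounded analytic function through $\phi$---is correct and is exactly how Stoilow factorization is used.

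The route is genuinely different, however. The paper (following \cite{astalaclopmateuorobitguriartepreprint}) does not fix a linear target $F\subset\R$ in advance: it builds source and target \emph{simultaneously} as planar Cantor sets, each an intersection of nested unions of disks, using the radial stretchings $z\mapsto|z|^{1/K-1}z$ in annuli between successive generations. The number of children, their radii $R_{N,p}$, and the contraction factors $\sigma_{N,p}$ are all chosen explicitly, and the dimension computations are carried out via the packing condition of Lemma~\ref{LemmaPackingTarget}, not via a Frostman measure pulled back from a prescribed $F$. Your approach trades that machinery for the simplicity of having $\gamma(F)>0$ by rectifiability, at the price of having to reverse-engineer $\phi$ to hit a given target.

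Your sketch contains an internal inconsistency you should resolve: annular shells centered on the real intervals $I^{(n)}_j$ necessarily meet $\R$, so the Beltrami coefficient cannot be both ``radial in shells around the $I^{(n)}_j$'' and ``supported in $\C\setminus\R$.'' If you drop the support condition, what you are really describing is the construction of Section~\ref{BasicConstruction} with all disk centers taken on $\R$; the target is then a Cantor subset of $\R$, but it is the one the construction produces, not an arbitrary prescribed fat Cantor set $F$. Finally, you correctly flag the calibration of the correction factors $c_n$ as the crux and then defer it entirely; in the paper's framework this is precisely what the explicit choices \eqref{SigmaAsFunctionOfR} (or \eqref{SigmaAsFunctionOfRGeneralizedHausdorffMeasure}) together with the packing argument accomplish, and without an analogue your proposal remains an outline rather than a proof.
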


The aforementioned Theorems \ref{theorem2.5inACMOU} (or Theorem \ref{theorem1.1inACMOU}) and \ref{theorems1.2and4.3inACMOU} are closely connected via the classical Stoilow factorization, which says (see \cite{astalaiwaniecmartin}, \cite{lehtovirtanen}), that in planar domains $K$-quasiregular mappings are precisely the maps $f = h \circ \phi$,  where $h$ is analytic and $\phi$ is $K$-quasiconformal. The idea in \cite{astalaclopmateuorobitguriartepreprint} is to combine distortion estimates for $\phi$ (such as Theorems \ref{theorem1.1inACMOU} or \ref{theorem2.5inACMOU}) and removability results for $h$ (i.e. analytic capacity results) in order to prove Theorem \ref{theorems1.2and4.3inACMOU}.

For the related BMO removability question, we have the following 

\begin{cor}\label{Corollary4.1inACMOU}\textbf{[Corollary 4.1 in \cite{astalaclopmateuorobitguriartepreprint}]}
Let $E$ be a compact subset of the plane. Assume that $\H^\frac{2}{K+1}(E)=0$. Then $E$ is removable for all $BMO$ $K$-quasiregular mappings.
\end{cor}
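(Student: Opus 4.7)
The plan is to combine Stoilow factorization with the invariance of $BMO$ under planar quasiconformal maps (Reimann's theorem), the Kaufman--Král characterization of $BMO$-removable sets for analytic functions, and Theorem \ref{theorem1.1inACMOU} applied to the quasiconformal factor. The main point is that each ingredient is already sharp in its own class, and the hypothesis $\H^{\frac{2}{K+1}}(E)=0$ is precisely what is needed to pass from the quasiregular side to the analytic side without loss.

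First, let $\Omega\supset E$ be open and let $f:\Omega\setminus E\to\C$ be a $K$-quasiregular mapping with $f\in BMO(\Omega)$. Extend the Beltrami coefficient of $f$ by zero across $E$ and solve the resulting Beltrami equation to obtain a global $K$-quasiconformal map $\phi:\C\to\C$ whose Beltrami coefficient agrees with that of $f$ on $\Omega\setminus E$. By the Stoilow factorization, there is an analytic function $h:\phi(\Omega)\setminus\phi(E)\to\C$ with $f=h\circ\phi$ on $\Omega\setminus E$. Since planar $K$-quasiconformal maps preserve the class $BMO$ (Reimann's theorem) and since $\phi$ has measure zero distortion of $BMO$-norm up to a constant depending only on $K$, the function $h=f\circ\phi^{-1}$ lies in $BMO(\phi(\Omega))$.

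Next, apply Theorem \ref{theorem1.1inACMOU} to the quasiconformal map $\phi$ and the compact set $E$: the hypothesis $\H^{\frac{2}{K+1}}(E)=0$ gives $\H^1(\phi(E))=0$. By the theorem of Kaufman \cite{kaufman} and Král \cite{kral}, a compact set of zero $1$-dimensional Hausdorff measure is removable for $BMO$ analytic functions. Therefore $h$ extends to an analytic function $\tilde h$ on all of $\phi(\Omega)$.

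Finally, setting $\tilde f=\tilde h\circ\phi$ produces a $K$-quasiregular extension of $f$ to all of $\Omega$, so $E$ is $BMO$ $K$-removable. The only nontrivial analytic input is Theorem \ref{theorem1.1inACMOU}; the rest of the argument consists in assembling standard facts about Stoilow factorization, quasiconformal invariance of $BMO$, and the Kaufman--Král characterization, so no serious obstacle is expected beyond recording the citations carefully.
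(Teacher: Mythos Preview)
Your proof is correct and follows the same approach as the paper, which simply records that the corollary ``follows immediately from Theorem~\ref{theorem1.1inACMOU} and the aforementioned result by Kaufman and Kr\'al.'' You have made explicit the Stoilow factorization and Reimann's $BMO$-invariance that the paper leaves implicit (having already discussed the Stoilow strategy just before the corollary), but the underlying argument is identical.
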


This follows immediately from Theorem \ref{theorem1.1inACMOU} and the aforementioned result by Kaufman \cite{kaufman} and Kr\'{a}l \cite{kral} that the condition $\H^1(E)=0$ is a precise characterization for removable singularities of  $BMO$ analytic functions. 

In light of the previous Theorems and Corollary, it is natural to wonder whether compact sets of sigma-finite $\frac{2}{K+1}$-Hausdorff measure are removable for BMO $K$-quasiregular mappings. Hence the question is raised (Question 4.2 in \cite{astalaclopmateuorobitguriartepreprint}) whether the BMO and the $L^{\infty}$ problems are indeed different, i.e. whether there exists for every $K>1$ a compact set $E$ of finite $\frac{2}{K+1}$-Hausdorff measure (hence $K$-removable, i.e. removable for the $L^{\infty}$ $K$-removability problem), which is not removable for BMO $K$-quasiregular mappings. This question was asked by Astala to the author before \cite{astalaclopmateuorobitguriartepreprint} in an equivalent form \cite{astalapersonalcommunication}.

One of our main results is the following 

\begin{theorem}\label{Question4.2inACMOUTHasPositiveAnswerIntroduction}
Question 4.2 in \cite{astalaclopmateuorobitguriartepreprint} has a positive answer. I.e. there exists for every $K \geq 1$ a compact set $E$ with $0 < \H^\frac{2}{K+1}(E) < \infty$, such that $E\,$  is not removable for some $K$-quasiregular functions in $BMO(\C)$. (The case $K=1$ is due to Kaufman and Kr\'{a}l, as we already mentioned.)
\end{theorem}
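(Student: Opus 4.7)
The natural plan is to exploit the Stoilow factorization combined with Reimann's theorem that $BMO$ is invariant under quasiconformal precomposition. I would first reduce the theorem to the following task: construct a compact set $E \subset \C$ with $0 < \H^{2/(K+1)}(E) < \infty$ together with a global $K$-quasiconformal map $\phi:\C\to\C$ such that $\H^1(\phi(E)) > 0$. Once such a pair $(E,\phi)$ is produced, the Kaufman--Kr\'al theorem provides an analytic function $h$ on $\C \setminus \phi(E)$ which lies in $BMO(\C)$ and does not extend analytically across $\phi(E)$. Setting $f = h \circ \phi$, Reimann's theorem gives $f \in BMO(\C)$, the Stoilow factorization ensures $f$ is $K$-quasiregular on $\C \setminus E$, and any hypothetical $K$-quasiregular extension of $f$ across $E$ would, upon factoring again as $\tilde h \circ \phi$, produce an analytic extension of $h$ across $\phi(E)$, contradicting Kaufman--Kr\'al. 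Thus $E$ is non-removable for $BMO$ $K$-quasiregular mappings, while $E$ is $K$-removable (for the $L^\infty$ problem) by Theorem \ref{theorems1.2and4.3inACMOU} since $\H^{2/(K+1)}(E) < \infty$.

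The construction of $(E,\phi)$ that I would carry out is a Cantor-type iterative construction following the philosophy of Astala's sharpness examples for dimension distortion. One starts with the unit square $Q_0 = [0,1]^2$, and at each step $n$ replaces every generation-$n$ square of side $r_n$ by four scaled copies of side $\sigma_n r_n$ placed at its corners; this yields $E = \bigcap_n E_n$ with $E_n$ a union of $4^n$ squares. Simultaneously one defines $\phi$ as the limit of maps $\phi_n$ that act on each generation-$n$ cell by a radial stretch of B\'eurling--Ahlfors type, carrying the four inner subsquares of side $\sigma_n r_n$ into four subsquares of a larger side $\tau_n r_n$ while remaining the identity outside each parent cell. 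The pointwise distortion of each radial stretch is controlled by the ratio $\log\tau_n/\log\sigma_n$; one chooses sequences $(\sigma_n)$, $(\tau_n)$ so that this ratio is at most $K$, guaranteeing $\phi$ is $K$-quasiconformal, while $\phi(E)$ is a Cantor set built from contraction ratios $\tau_n$ with $\prod_{n\le N}(4\tau_n^2)\gtrsim 1$, i.e.\ with positive $\H^1$ measure.

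The main technical obstacle is calibrating the sequences $(\sigma_n)$ and $(\tau_n)$ so that \emph{both} $\H^{2/(K+1)}(E)$ is finite and strictly positive \emph{and} $\H^1(\phi(E))$ is strictly positive, with the $K$-quasiconformality constraint $\log\tau_n\le K\log\sigma_n$ (in a suitably averaged, scale-dependent sense) preserved. A naive constant-ratio choice gives $\dim E = 2/(K+1)$ but typically $\H^{2/(K+1)}(E)=0$, so one needs to perturb the scales by slowly varying logarithmic corrections in order to produce finite positive $2/(K+1)$-measure on $E$, and verify via a Frostman-type mass-distribution argument that these corrections do not destroy positivity of $\H^1$ on $\phi(E)$ nor violate the distortion bound $K$. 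The concentration of $E$ along the natural self-similar measure $\mu$ (with $\mu(Q) \asymp \sigma_1\cdots\sigma_n$ on generation-$n$ squares) and the companion measure $\phi_*\mu$ on $\phi(E)$ have to be estimated from above and below at all scales, and matched against $r^{2/(K+1)}$ and $r$ respectively; this matching, together with the distortion arithmetic of the radial stretches, is where the delicate analysis lies.
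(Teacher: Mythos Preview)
Your reduction via Stoilow factorization, Reimann's $BMO$-invariance theorem, and the Kaufman--Kr\'al characterization is correct and is exactly how the paper passes from the geometric statement to non-removability (see the closing lines of the proof of Theorem~\ref{TheoremConstructionForUsualHausdorffMeasures}).

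The genuine gap is in the construction of $(E,\phi)$. A construction with \emph{equal-size} children at each generation, combined with Euclidean radial stretches, cannot deliver both $0<\H^{2/(K+1)}(E)<\infty$ and $\H^1(\phi(E))>0$ with $\phi$ exactly $K$-quasiconformal. In the radial-stretch picture, each source child of radius $\sigma_n^K R_n$ sits concentrically inside a protecting disk of radius $R_n$, and the $K$-stretch expands it to radius $\sigma_n R_n$; the calibration $\sigma_n=R_n$ (valid for $d=\tfrac{2}{K+1}$) gives $(\sigma_n^K R_n)^{d}=(\sigma_n R_n)^{d'}=R_n^2$, so the generation-$n$ contribution to both Hausdorff sums equals the area fraction $\sum_j m_{n,j} R_{n,j}^2$ covered by the protecting disks. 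But disjoint disks of a \emph{single} radius cover at most $1-\varepsilon_0$ of the parent for an absolute $\varepsilon_0>0$ (Lemma~\ref{FillingAreaOfDiskWithDisks}(a)), forcing $\prod_n(1-\varepsilon_0)=0$ and hence $\H^d(E)=\H^{d'}(\phi(E))=0$. Your proposed slowly varying logarithmic corrections to $\sigma_n$ cannot rescue this: the ceiling $1-\varepsilon_0$ is independent of scale, so no perturbation of the ratios lifts the product above zero. And if you switch to squares with $\ell^\infty$-radial stretches to tile the parent perfectly, the paper notes explicitly (in the discussion preceding Lemma~\ref{FillingAreaOfDiskWithDisks}) that the distortion constant then strictly exceeds $K$, so the map is no longer $K$-quasiconformal.

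The paper's resolution is precisely to break the equal-size constraint and allow the children \emph{within a single generation} to have different radii: by Lemma~\ref{FillingAreaOfDiskWithDisks}(b) one can then cover area proportion $1-\varepsilon_n$ with $\varepsilon_n\to 0$ fast enough that $\prod_n(1-\varepsilon_n)>0$. The packing estimate needed for the lower Hausdorff-measure bound (Lemma~\ref{LemmaPackingTarget}) is then proved by exploiting the identity $(\sigma R)^{d'}=R^2$, which converts the $d'$-packing condition into a plain area inequality for disjoint disks. This varying-size idea is the crux of the argument and is exactly what is missing from your outline.
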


The construction that we give has implications to the important problem of determining whether \eqref{abscont} holds. To be more precise, any statement of the type $A \,\, \Longrightarrow \,\, B$ can be sharp in two different senses. Let us say that it has a ``sharp hypothesis" if any hypothesis $\widetilde{A}$ strictly weaker than $A$ cannot yield the same conclusion $B$, i.e. for any such $\widetilde{A}$ there are cases were $B$ is not satisfied. Let us say that it has a ``sharp conclusion" if any conclusion $\widetilde{B}$ strictly stronger than $B$ cannot hold under the hypothesis $A$, i.e. there are cases where $A$ is satisfied but $\widetilde{B}$ is not. Then another one of our main results is

\begin{thm}\label{SharpnessOfTheorems1.1inACMOUand2.5inACMOUIn2DifferentSensesIntroduction}
\begin{enumerate}
\item[(a)] Consider the statement \eqref{abscont}, i.e. that for any compact set $E \subset \C$ and any $K$-quasiconformal mapping $\phi:\C \to \C$, we have that $$\H^d(E)=0\,\,\,\Longrightarrow\,\,\,\H^{d'}(\phi(E))=0,$$
with $d'=\frac{2Kd}{2+(K-1)d}$ and $0<d<2$. If such a statement is true, then it is sharp in both the ``sharp hypothesis" and ``sharp conclusion" ways (provided the weaker hypothesis or conclusion are expressed in terms of Hausdorff gauge functions.)

Notice that \eqref{abscont} is true in the particular case $d=\frac{2}{K+1}$, $d'=1$, (Theorem \ref{theorem1.1inACMOU}), which is the relevant case for removability (Theorems \ref{theorems1.2and4.3inACMOU} and \ref{theorem5.1inACMOU}), and it is conjectured to be true for all $0<d<2$ (Question 4.4 in \cite{astalaareadistortion} and Conjecture 2.3 in \cite{astalaclopmateuorobitguriartepreprint}.)

\item[(b)] Consider the statement that for any compact set $E \subset \C$ and any $K$-quasiconformal mapping $\phi:\C \to \C$, we have that 
\begin{equation}\label{StatementSigmaFiniteGoesToSigmaFiniteInAllDimensionsIntroduction}
\H^d(E) \ is \ \sigma-finite \,\,\,\Longrightarrow\,\,\,\H^{d'}(\phi(E)) \ is \ \sigma-finite, 
\end{equation}
with $d'=\frac{2Kd}{2+(K-1)d}$ and $0<d<2$. If such a statement is true, then it is sharp in both the ``sharp hypothesis" and ``sharp conclusion" ways (provided the weaker hypothesis or conclusion are expressed in terms of Hausdorff gauge functions.)

Notice that \eqref{StatementSigmaFiniteGoesToSigmaFiniteInAllDimensionsIntroduction} is true in the particular case $d=\frac{2}{K+1}$, $d'=1$, (Theorem \ref{theorem2.5inACMOU}), which is the relevant case for removability, (Theorems \ref{theorems1.2and4.3inACMOU} and \ref{theorem5.1inACMOU}), and we conjecture it is true for all $0<d<2$.
\end{enumerate}
\end{thm}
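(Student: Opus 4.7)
The plan is to construct, for each $K>1$ and each $0<d<2$ with $d'=\frac{2Kd}{2+(K-1)d}$, generalized Astala-type Cantor sets $E=\bigcap_n E_n\subset\C$ together with a $K$-quasiconformal map $\phi:\C\to\C$ that realizes the extreme dimension distortion of \eqref{distortionofdimension}. Recall the model of \cite{astalaareadistortion}: at generation $n$, the set $E_n$ is a disjoint union of $4^n$ closed disks, obtained by placing four children of radius $\sigma\, r_{n-1}$ at the corners of each parent disk of radius $r_{n-1}$; the map $\phi$ is an infinite composition of radial stretches on the annular complement of each parent, rescaling the children's radii from $\sigma\, r_{n-1}$ to $\tau\, r_{n-1}$. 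The pair $(\sigma,\tau)$ determines both the quasiconformality constant and the dimensions $\dim E=d$, $\dim\phi(E)=d'$. What will vary across the four sharpness examples is the \emph{sequence} $(\sigma_n,\tau_n)$ used at generation $n$, which we tune to the prescribed gauge function.

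For the sharp-hypothesis halves of (a) and (b), given a Hausdorff gauge $h$ with $h(t)/t^d\to 0$, we insert sparse ``thin" generations on which $\sigma_n$ is small enough that $4^n h(r_n)\to 0$, forcing $\mathcal{H}^h(E)=0$ (respectively, producing an $E$ on which $\mathcal{H}^h$ is $\sigma$-finite while $\mathcal{H}^{d'}\!\!\restriction\!\phi(E)$ is not), while keeping $\tau_n=4^{-1/d'}$ on those generations so that $\phi(E)$ still supports a Frostman measure of dimension $d'$. A diagonal interleaving of standard and thin stages produces the required $(E,\phi)$. Dually, for the sharp-conclusion halves, given $h'$ with $h'(t)/t^{d'}\to\infty$, we push $\tau_n$ slightly above $4^{-1/d'}$ on a sparse subsequence so that $\phi(E)$ acquires positive $\mathcal{H}^{h'}$-measure (or fails $\sigma$-finiteness for $\mathcal{H}^{h'}$), while simultaneously choosing the companion $\sigma_n$ small enough that $\sum_n 4^n\sigma_n^{d}\to 0$ and hence $\mathcal{H}^d(E)=0$. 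The critical case $d=\tfrac{2}{K+1}$, $d'=1$ of this family is precisely the example proving Theorem \ref{Question4.2inACMOUTHasPositiveAnswerIntroduction}; the general result is a single flexible family of Cantor-type constructions calibrated to the prescribed gauge.

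The main obstacle is twofold. First, we must verify that the Beltrami coefficient of the piecewise-radial stretch coming from variable ratios $(\sigma_n,\tau_n)$ has $L^\infty$-norm at most $(K-1)/(K+1)$ uniformly in $n$, even when an extremal generation pushes one of $\sigma_n$ or $\tau_n$ toward the boundary allowed by the gauge; this forces a uniform comparison of $|\log(\sigma_n/\tau_n)|$ against $\log(1/\sigma_n)$ across all generations, and has to be rechecked in each of the four modifications. Second, the lower bounds $\mathcal{H}^{d'}(\phi(E))>0$ and $\mathcal{H}^{h'}(\phi(E))>0$ (and their $\sigma$-finite analogues) require a variable-scale, gauge-adapted Frostman-type mass distribution on $\phi(E)$ rather than just the classical $d'$-dimensional one; producing this gauge-sensitive mass distribution, and propagating it through the infinite composition defining $\phi$, is the technical heart of the proof and the point where the precise inner geometry of Astala's Cantor construction is essential.
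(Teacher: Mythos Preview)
Your plan has a genuine structural gap: the four-equal-children Astala model cannot reach the \emph{measure}-level endpoint, only the \emph{dimension}-level endpoint, and this is exactly the obstacle the paper is built to overcome.

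Here is the issue. With four children of radius $\rho r$ inside a parent of radius $r$, the packing constraint forces $4\rho^{2}\le c_{0}<1$ for an absolute $c_{0}$ (this is Lemma~2.1(a) of the paper). The $K$-quasiconformal radial stretch on the annulus $D(z,\rho r)\setminus D(z,\sigma r)$ ties your parameters by $\tau/\rho=(\sigma/\rho)^{1/K}$; you are not free to choose $\sigma_{n}$ and $\tau_{n}$ independently. A short computation using $d'=\tfrac{2Kd}{2+(K-1)d}$ then gives the exact relation
\[
4\sigma_{n}^{\,d}\;=\;\bigl(4\tau_{n}^{\,d'}\bigr)\left(\frac{4\tau_{n}^{\,d'}}{4\rho_{n}^{2}}\right)^{\!\frac{(K-1)d}{2}}.
\]
Because $4\rho_{n}^{2}\le c_{0}<1$ uniformly, this forces $\sum_{n}\log(4\sigma_{n}^{d})\ge \sum_{n}\log(4\tau_{n}^{d'})-Cn$ with $C>0$ depending only on $K,d,c_{0}$. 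Hence if $\prod_{n}4\sigma_{n}^{d}\to 0$ (needed for $\H^{d}(E)=0$) then $\prod_{n}4\tau_{n}^{d'}$ decays at least geometrically, and no gauge $h(t)=t^{d'}\eta(t)$ with sub-power $\eta$ can give $\H^{h}(\phi(E))>0$. The same computation blocks the other three cases. In short, the ``first obstacle'' you flag---keeping the Beltrami norm $\le(K-1)/(K+1)$ while decoupling $\sigma_{n}$ from $\tau_{n}$---is not a technicality to be checked but an actual obstruction that kills the scheme.

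What the paper does instead is abandon equal-size children entirely. At each generation it packs the parent disk with children of \emph{many different radii} $R_{N,j}$ so that $\sum_{j}m_{N,j}R_{N,j}^{2}=1-\varepsilon_{N}$ with $\prod_{N}(1-\varepsilon_{N})>0$ (Lemma~2.1(b)); the companion parameters $\sigma_{N,j}$ are then fixed by the algebraic identity $(\sigma^{K}R)^{d}=(\sigma R)^{d'}=R^{2}$, which simultaneously turns the $\H^{d}$-sum on the source, the $\H^{d'}$-sum on the target, and the \emph{area} sum into the same quantity $1-\varepsilon_{N}$. This area interpretation is what drives both the upper bounds and the packing Lemma~3.2 giving the Frostman lower bounds. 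The ``sharp hypothesis'' halves of (a) and (b) then follow directly from this single example with $0<\H^{d}(E)<\infty$ and $0<\H^{d'}(\phi(E))<\infty$ (plus a gluing for the $\sigma$-finite conclusion); the ``sharp conclusion'' halves require rerunning the construction with $\sigma$ determined by the generalized gauge via $h^{(S)}(s_{j_{1},\dots,j_{N}})=(R_{1,j_{1}}\cdots R_{N,j_{N}})^{2}$ (Theorem~4.1), together with reduction lemmas showing any prescribed gauge can be replaced by one meeting the needed monotonicity/log-type hypotheses. Your Frostman step and your $K$-qc check would both go through in that framework, but only after you replace the four-corner model by the area-filling, unequal-children construction.
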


%The sharpness of Theorem \ref{theorem2.5inACMOU} in the sharp conclusion sense, is done via an iteration of the construction proving Theorem \ref{Question4.2inACMOUTHasPositiveAnswerIntroduction}, giving a set of positive, non-finite but $\sigma$-finite Hausdorff measure that is mapped to a set of positive, non-finite but $\sigma$-finite Hausdorff measure at the corresponding dimension. 

We don't know if a set of finite Hausdorff measure $d$ is always mapped to a set of finite Hausdorff measure $d'$ (as opposed to $\sigma$-finite.)

Due to the previous comments on the implications for sharpness of the aforementioned Theorems, the constructions to be presented provide examples of ``maximum stretching" allowed by quasiconformal mappings at the fine level of Hausdorff measures. To understand better what we mean by ``maximum stretching", let us briefly review the history (to the best of our knowledge) of these ``maximum stretching" examples. Astala \cite{astalaareadistortion} showed that the dimension distortion bounds \eqref{distortionofdimension} are optimal by gluing a sequence of examples $E_n$ so that $\dim (E_n) = d$, and $\dim \phi(E_n) = d' - \varepsilon_n$, for a certain $K$-quasiconformal mapping $\phi$ and for a sequence $\varepsilon_n \to 0$. One can think that the ``string" joining the source and target sets was as close to the maximum stretching as possible. Later, Theorem \ref{theorem5.1inACMOU} in \cite{astalaclopmateuorobitguriartepreprint} gave an example of the string being at the maximum stretching, in the case of $d=\frac{2}{K+1}$ and $d'=1$ (although it works in other dimensions as well), but the maximum stretching was only in terms of dimension. I.e. \cite{astalaclopmateuorobitguriartepreprint} gave a $K$-quasiconformal mapping $\phi$ and a compact set $E$ so that $\dim (E) = d$, and $\dim \phi(E) = d'$, but this example did not give the maximum possible stretching at the finer level of Hausdorff measures. Whether this maximum possible stretching at the finer level of Hausdorff measures can occur remained open, and that is the content of Question 4.2 in \cite{astalaclopmateuorobitguriartepreprint} that we answer positively in this paper for all dimensions.

%Another consequence of these examples is that, in the paper \cite{astalaclopmateuorobitguriartepreprint}, a partial result towards \eqref{abscont} is obtained for $t'>1$ in terms of capacities. The proof of that estimate is based on the invariance of some function spaces under quasiconformal mappings. Thus one might wonder whether the ``correct" estimate for quasiconformal distortion of size of sets should be expressed in terms of capacities being zero as opposed to Hausdorff measures being zero. Some sort of ``absolute continuity of capacities". Our example rules out that possibility at least in one extreme case.

The Cantor-type construction that we will present is done with some radial stretchings on disks. Some packing problems appear which might suggest using similar radial stretchings on squares (with the $\ell^\infty$ norm instead of the Euclidean norm in $\C$), but then the constant of quasiconformality would be strictly larger than $K$. Hutchinson \cite{hutchinson} (see also \cite{mattila}) considered Cantor type sets where all the generations in the construction had the same number of children, and this number was a constant throughout the generations. A further generalization (see e.g. \cite{mattila}) makes this number increase very fast from one generation to the next. This was also the idea in Theorem \ref{theorem5.1inACMOU}. However in all these constructions, to the best of our knowledge, all children of a given ball had (roughly) the same size, and the children were uniformly distributed inside the father ball. 

%To make this remark more precise, let us note that section 4.12 in \cite{mattila} does indeed allow for children of the same father with very different sizes. I.e., if $\diam$ denotes the diameter of a set, it assumes that for $k=1,2,...$ we have compact sets $E_{i_1, \dots ,i_k}$, with $i_j = 1, ... , m_j$ such that

%\begin{enumerate}
%\item[(1)] $E_{i_1, \dots ,i_k, i_{k+1}} \subset E_{i_1, \dots ,i_k}$

%\item[(2)] $\ds d_k = \max_{i_1, \dots ,i_k} \diam(E_{i_1, \dots ,i_k}) \to 0$ as $k \to \infty$

%\item[(3)] $\ds \sum_{j=1}^{m_{k+1}} \diam(E_{i_1, \dots ,i_k,j})^s = \diam(E_{i_1, \dots ,i_k})^s  $

%\item[(4)] $\ds \sum_{B \cap E_{i_1, \dots ,i_k} \neq \emptyset} \diam(E_{i_1, \dots ,i_k})^s  \leq c\, \diam(B)^s $
%for any ball $B$ with $\diam(B) \geq d_k$, where $c$ is a positive constant.

%\end{enumerate}

%Under those hypotheses, the conclusion is that

%$$
%0 < \H^s \left( \bigcap_{k=1}^{\infty} \bigcup_{i_1, \dots ,i_k}  E_{i_1, \dots ,i_k}  \right) < \infty
%$$

%However, as far as we know, there is no concrete way in the literature of satisfying those hypotheses if the number of children of a given ball of generation $N$ is unbounded as $N$ increases (as we need here) besides the case when all the children of the same father have (roughly) the same size (see also \cite{martinmattilakdimensional}.) 

The theorem in section 4.12 in \cite{mattila} (see also \cite{martinmattilakdimensional}) allows for more general constructions, but to our knowledge, those have not appeared previously in the literature.

However in this paper we need to construct a Cantor set where the children of the same ball are of very different sizes, and it was not clear a priori to us how to position the children inside the father ball, and what is the ``appropriate" thickness (size) for each of the children (indeed different children of the same father have different sizes) to yield a given Hausdorff measure for the resulting Cantor set. The previous constructions built Cantor sets where the children of a given father were roughly of the same size and uniformly distributed inside the father. So here an appropriate notion of ``uniformly distributed" and ``size" was needed. Here we construct such Cantor sets. A somewhat surprising aspect is that these appropriate notions of ``uniform distribution" and ``size" are naturally suggested by properties of $K$-quasiconformal mappings via an algebraic identity (see \eqref{EquationSigmaR} or the simpler case \eqref{EquationSigmaRInCaseTargetDimensionIs1}), which has a nice geometric interpretation in terms of area (see \eqref{SimplificationOfTargetEquationToArea}), which in turn is indeed very useful for proving the properties needed (see \eqref{ProofPackingTarget} and \eqref{ProofPackingTargetPart2}). This geometric interpretation becomes essentially the only guiding principle (even for the appropriate choice of definitions, see \eqref{SigmaAsFunctionOfRGeneralizedHausdorffMeasure}) for the proof of the general case, when technicalities are so pervasive that the algebraic intuition is lost. See also the comments after the proof of Lemma \ref{LemmaPackingTarget}.

The construction we present satisfies the hypotheses in the aforementioned Theorem 4.12 in \cite{mattila} (except for hypothesis (3) which is satisfied up to a factor of $(1-\varepsilon_{k+1})$, see \eqref{CoveringByBuildingBlocksConditionAtScaleNForSourceAndTarget}). However, we give the complete proof that the resulting sets have strictly positive and finite Hausdorff measure $\H^s$ for the convenience of the reader, to highlight the interactions between algebraic identities, quasiconformal mappings and geometry, and because we need also in section \ref{ConstructionForGeneralizedHausdorffMeasures} a result for more general gauge functions than the ones appearing in \cite{mattila}.

% (see section \ref{ConstructionForUsualHausdorffMeasures}) on the one hand for the convenience of the reader. On the other hand, so that in this case it is clear how the quasiconformal mappings suggest the construction and where the pertinent algebraic identities inspired by the quasiconformal mappings and the geometry comes into play. As we mentioned, as far as we know, the constructive way to build these sets so that they satisfy (essentially) Theorem 4.12 in \cite{mattila} is the first constructive example of Cantor sets satisfying those hypotheses in which children of the same father have wildly different sizes, and the geometric meaning that we give to condition (3) in Theorem 4.12 in \cite{mattila} in terms of area (see \eqref{CoveringByBuildingBlocksConditionAtScaleNForSourceAndTarget}, understanding the product at the end of that equation as the area of the disks chosen up to generation $N$ inside $\D$ -before any dilations with parameters $\sigma$ are made to them-). This geometric interepretation, besides its beauty, is actually useful in the proof and provides for an essentially automatic way to check condition (3) in Theorem 4.12 in \cite{mattila}.

%Note also that Theorem 4.12 in \cite{mattila} only is stated for Hausdorff gauge functions $h(t)=t^s$, whereas in section \ref{ConstructionForGeneralizedHausdorffMeasures} we do need much more general Hausdorff gauge functions. This is another reason to present in our specific case the proof of Theorem 4.12 in \cite{mattila}, adapted to our case of such more general Hausdorff gauge functions.

The construction we present is inspired in that of Theorem \ref{theorem5.1inACMOU} (which in turn is inspired in Theorem 18.7.1 in \cite{iwaniecmartin}), albeit a number of modifications and technical difficulties appear. 

We include in section \ref{BasicConstruction} only the basic construction of the Cantor-type set, leaving the choice of the key parameters for later sections. In section \ref{ConstructionForUsualHausdorffMeasures} the parameters are chosen to yield the proof for the case of the Hausdorff measures with gauge function $h(t)=t^\alpha$, i.e. the ``usual" Hausdorff measures. This easier case already contains most of the main ideas, and the considerably more technical general case is done in section \ref{ConstructionForGeneralizedHausdorffMeasures}.

\textit{Acknowledgements.} I would like to thank Stephen Montgomery-Smith who pointed out that for an increasing function $\varepsilon: \R^+ \to \R^+$ with $\varepsilon (0)=0$, the condition that $\frac{\varepsilon(t)}{t}$ is decreasing is very close to the condition that $\varepsilon(t)$ is concave, and that the decreasing condition for $\frac{\varepsilon(t)}{t}$ is indeed compatible with the logarithmic-type condition appearing in Theorem \ref{SharpExampleTheoremForGeneralizedHausdorffMeasures}. I would also like to thank K. Astala, A. Clop, J. Mateu and J. Orobitg for insightful conversations regarding the previous paper we wrote \cite{astalaclopmateuorobitguriartepreprint}. Further pondering on some of these insights inspired some of the ideas of this paper. I would also like to thank M. Jos\'{e} Mart\'{i}n for her help with the pictures.

In terms of acknowledgements not mathematically related to this paper, I would also like to thank my former advisor Peter W. Jones, my former mentor Kari Astala, my mentors Alex Iosevich and Igor Verbitsky; Mark Ashbaugh, Chris Bishop, Mario Bonk, Peter Casazza, David Drasin, the late Miguel de Guzm\'{a}n, Loukas Grafakos, Juha Heinonen, Eugenio Hern\'{a}ndez, Steve Hofmann, Tadeusz Iwaniec, Nigel Kalton, Nam-Gyu Kang, Carlos Kenig, Pekka Koskela, Nikolai Makarov, Olli Martio, Pertti Mattila, Anna Mazzucato, Dorina and Marius Mitrea, Paul F.X. M\"{u}ller, Tomi Nieminen, Jill Pipher, Luke Rogers, Eero Saksman, Raanan Schul, Hans-Olav Tylli, Joan Verdera and Dragan Vukotic for their support and help at some time or other (or continuously) during my postdoc positions at the Mathematics Department at the University of Missouri-Columbia and at Helsinki University, departments to which I am also indebted and grateful. I am omitting many others whom I keep in mind for reasons of space.

I would also like to thank my wife In\'{e}s and my family and friends for continued support.

\section{The basic construction}\label{BasicConstruction}

As we mentioned above, in \cite{astalaclopmateuorobitguriartepreprint}, the following question is asked:

\begin{quest}\label{question4.2inACMOU}\textbf{[Question 4.2 in \cite{astalaclopmateuorobitguriartepreprint}]}
Does there exist for every $K \geq 1$ a compact set $E$ with $0 < \H^\frac{2}{K+1}(E) < \infty$, such that $E\,$  is not removable for some $K$-quasiregular functions in $BMO(\C)$.
\end{quest}

Since the condition $\H^1(E)=0$ is a precise characterization for removable singularities of $BMO$ analytic functions, (\cite{kaufman}, \cite{kral}), the case $K=1$ is already known to have a positive answer.
%Due to the already mentioned result of Kaufman \cite{kaufman} and Kr\'{a}l \cite{kral}, that the condition $\H^1(E)=0$ is a precise characterization for removable singularities of  $BMO$ analytic functions, the case $K=1$ is already known to have a positive answer, so we focus on the case $K>1$.
Between the present section and section \ref{ConstructionForUsualHausdorffMeasures}, we will prove the following

\begin{thm}\label{TheoremConstructionForUsualHausdorffMeasures}
Let $K>1$. For any $0<t<2$, there exists a compact set $E$ with $0 < \H^t(E) < \infty$ and a $K$-quasiconformal mapping $\phi : \C \rightarrow \C$ such that $0 < \H^{t'}(\phi E) < \infty$, where $t'=\frac{2Kt}{2+(K-1)t}$ (see equations \eqref{abscont} and \eqref{distortionofdimension}.)

In particular, choosing $t=\frac{2}{K+1}$ (so $t'=1$) gives a positive answer to Question \ref{question4.2inACMOU}.
\end{thm}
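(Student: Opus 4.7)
The plan is to construct $E$ and $\phi$ simultaneously via a Cantor-type iteration on disks, using $K$-quasiconformal radial stretchings at each scale. This generalizes the construction from Theorem \ref{theorem5.1inACMOU}, but with the essential new feature that the children of a given disk are allowed to have very different sizes; this is what upgrades the conclusion from ``full Hausdorff dimension'' to ``positive and finite Hausdorff measure.'' Starting from $D_0 = B(0,1)$, inside each disk $D = B(c,R)$ at generation $n$ I would insert a disjoint collection $\{B(c_i,\rho_i)\}_{i=1}^{N}$ with $|c_i - c| = d_i < R$, and apply the $K$-quasiconformal radial stretching $\psi_D(z) = c + (z-c)|(z-c)/R|^{1/K-1}$, which fixes $\partial D$ pointwise and sends $B(c_i,\rho_i)$, to first order when $\rho_i \ll d_i$, to a roughly elliptical region of ``radius'' $\rho_i' \approx K^{-1}R^{1-1/K} d_i^{1/K-1}\rho_i$. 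Iterating self-similarly gives a decreasing sequence of compact sets whose intersection is $E$, and composing the $\psi_D$ across all generations gives a globally $K$-quasiconformal $\phi$.

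The core of the construction is to choose $(N, d_i, \rho_i)$ so that both source and target carry the correct Hausdorff mass; concretely, one wants a configuration satisfying
\[
\sum_{i=1}^N \rho_i^t \;\approx\; R^t \qquad\text{and}\qquad \sum_{i=1}^N (\rho_i')^{t'}\;\approx\; R^{t'},
\]
with $t' = \tfrac{2Kt}{2+(K-1)t}$. Substituting the formula for $\rho_i'$ turns the target identity into an algebraic constraint on the pairs $(d_i,\rho_i)$, and the specific exponent relation between $t$ and $t'$ is precisely what converts this target sum into a sum of \emph{areas} of thin annular slabs around the points $c_i$. This is the ``geometric interpretation via area'' highlighted in the introduction, and I would use it to pick $d_i$ and $\rho_i$ inductively, organizing the children into annular rings $d_i \approx 2^{-j}R$ so that within each ring the children have comparable size, with a single free parameter controlling their thickness.

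The main obstacle is the packing lemma: arranging the children disjointly inside $D$ so that \emph{also} their images under $\psi_D$ are disjoint inside $D$. Since $\psi_D$ anisotropically inflates disks near the center of $D$ while deflating those near $\partial D$, the source-side and target-side packings impose genuinely different constraints, neither of which implies the other. Within each annular ring the two packings each reduce to elementary area bookkeeping, and the area identity from the previous paragraph ensures that both the total source area $\sum \rho_i^2$ and the total target area $\sum (\rho_i')^2$ fit inside $\pi R^2$ with a slack factor $1-\varepsilon_n$, where $\{\varepsilon_n\}$ can be made to shrink to $0$ quickly enough to be absorbed across the iteration.

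Given the building block, the remaining steps are largely verification. The iterated maps converge uniformly to a $K$-quasiconformal $\phi$ because each $\psi_D$ is supported in a generation-$n$ disk of geometrically shrinking diameter, and quasiconformality is preserved under locally uniform limits. For the Hausdorff mass bounds on $E$ and $\phi(E)$, I would invoke Mattila's Theorem 4.12 (with the relaxed $(1-\varepsilon_{n+1})$-covering condition the paper describes): the upper bounds come from covering $E$ and $\phi(E)$ by the building blocks themselves, while the lower bounds come from the mass distribution principle applied to the natural Cantor measure assigning child $i$ the mass $\rho_i^t / R^t$ on the source side (respectively $(\rho_i')^{t'}/R^{t'}$ on the target side). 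Because the construction is executed simultaneously on both sides, these two measures correspond under $\phi$, yielding $0 < \H^t(E), \H^{t'}(\phi(E)) < \infty$ and hence Theorem \ref{TheoremConstructionForUsualHausdorffMeasures}.
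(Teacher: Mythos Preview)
Your proposal has a fatal gap in the quasiconformality claim. The radial stretching $\psi_D(z)=c+(z-c)|(z-c)/R|^{1/K-1}$ has Beltrami coefficient of constant modulus $\frac{K-1}{K+1}$ \emph{everywhere} on $D\setminus\{c\}$; in particular it is genuinely $K$-quasiconformal (and not conformal) on each child disk $B(c_i,\rho_i)$. When you iterate and apply a second-generation stretching $\psi_{D'}$ on a child $D'$, the composition $\psi_{D'}\circ\psi_D$ has dilatation up to $K^2$ on $D'$, and after $n$ generations the dilatation is $K^n$. So the limit map is not $K$-quasiconformal (indeed, not quasiconformal at all). The fact that the supports shrink does not help: locally uniform limits preserve $K$-quasiconformality only if the approximants are uniformly $K$-quasiconformal, which yours are not. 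Your own computation of the image as a ``roughly elliptical region'' with aspect ratio $K$ is exactly the symptom of this: the map is distorting the children with full dilatation $K$.

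The paper's construction avoids this by a different building block. Inside each parent disk one places protecting disks $D^i_j$ of radius $R$, and inside each of those a much smaller concentric disk $(D^i_j)'$ of radius $\sigma^K R$. The map $g_N$ is the radial stretching only on the annulus $D^i_j\setminus (D^i_j)'$, is a pure \emph{similarity} on $(D^i_j)'$, and is the identity outside $\bigcup D^i_j$. Thus $g_N$ is conformal off a union of annuli, and the next generation is built entirely inside the images $(D^i_j)''$ of the conformal cores, so the supports of the Beltrami coefficients of $g_1,g_2,\dots$ are pairwise disjoint and the composition stays $K$-quasiconformal. The source and target children then have radii $\sigma^K R$ and $\sigma R$ respectively, both genuine disks, and the key algebraic choice $\sigma^{Kd}=R^{2-d}$ gives $(\sigma^K R)^d=(\sigma R)^{d'}=R^2$. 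This simultaneously equates the $d$-mass on the source, the $d'$-mass on the target, and the \emph{area} of the protecting disks; the area interpretation then drives the packing lemma (Lemma~\ref{LemmaPackingTarget}) needed for the lower Hausdorff bounds. Your annular-ring organization and ``area bookkeeping'' intuition are pointing at the right phenomenon, but they have to be implemented with this conformal-core mechanism, not with a single global stretch of the parent.
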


\begin{proof}

We will construct the $K$-quasiconformal mapping $\phi$ as the limit of a sequence $\phi_N$ of $K$-quasiconformal mappings, and $E$ will be a Cantor-type set. To reach the optimal estimates we need to change, at every step in the construction of $E$, both the size and the number  $m_j$ of the generating disks. However, this change is made not only from one step to the next, as in \cite{astalaclopmateuorobitguriartepreprint}, but also within the same step of the construction.

As part of the motivation for the construction, notice that in the terminology of \cite{astalaclopmateuorobitguriartepreprint}, for the case $t=\frac{2}{K+1}$ and $t'=1$, we are (formally) choosing $\varepsilon (t) =1$. Then, the area (up to a multiplicative factor of $\pi$) covered by the $N^{th}$ generating disks is $c_N$, where $1>c_N=m_N\,R_N^2>\frac{1}{2}$, and $m_N$ is the number of generating disks which are chosen disjoint inside the unit disk. The product of the factors $c_N$ appears as part of the gauge function $\varepsilon ' (t)$ in \cite{astalaclopmateuorobitguriartepreprint}, which forces the distortion not to be as sharp as conceivably possible. So one would like the product of the areas covered at the different steps to be convergent to a strictly positive constant.

This observation motivates the following elementary Lemma, of which part (a) is well-known in the context of sphere packings:

\begin{lemma}\label{FillingAreaOfDiskWithDisks}
Let $\D = \{z \in \C : |z|<1 \}$.
\begin{enumerate}
\item[(a)] There exists an absolute constant $\varepsilon_0 >0$ such that for any $0<R<1$, and any collection of disks $D_j \subset \D$ with disjoint interiors, with radii $r_j=R$, $\mid \cup_j D_j  \mid < (1-\varepsilon_0) \mid \D \mid$, i.e. $\displaystyle \sum_j r_j^2 < 1-\varepsilon_0$. (Here $\mid A \mid$ is the area of $A$.)

\item[(b)] For any $\varepsilon >0$, there exists a finite collection of disks $D_j \subset \D$ with radii $r_j$ with disjoint interiors (or even disjoint closures), such that $\mid \cup_j D_j  \mid > (1-\varepsilon) \mid \D \mid$, i.e. $\displaystyle \sum_j r_j^2 > 1-\varepsilon$.
\end{enumerate}
\end{lemma}

\begin{proof}
Part (a) follows readily from the observation that given any 3 pairwise tangent disks $D_1, D_2, D_3$ with the same radius $R$, in the space they leave between them (i.e. in the bounded component of $\displaystyle \C \setminus \bigcup_{j=1}^{3} D_j$) one can fit another disk $B$, tangent to $D_1, D_2$ and $D_3$, with radius $cR$, where $c$ is an absolute constant independent of $R$.

Part (b) follows from Vitali's covering theorem, but we will prove it directly since we will later use some elements from the proof. Given a bounded open set $\Omega$, consider a mesh of squares of side $\delta$. Select those squares entirely contained in the open set, i.e. $\overline{Q_j} \subset \Omega$, say such a collection is $\{ Q_j  \}_{j=1}^{N}$. Then $\displaystyle \mid \Omega \setminus \bigcup_{j=1}^{N} Q_j \mid $ is as small as we wish if $\delta$ is sufficiently small.

For each $Q_j$, let $D_j$ be the largest disk inscribed inside it. (Shrink the $D_j$ slightly so that they have disjoint closures.) Then $\mid D_j \mid > \frac{1}{2} \mid Q_j \mid$.

Consequently, given $\Omega_0 = \D$, pick a first collection of disks $\ds \{D^1_j \}_{j=1}^{N}$ eating up at least, say, $\frac{1}{10}$ of the area of $\D$. Let $\ds \Omega_1 = \D \setminus \bigcup_{j=1}^{N} D^1_j$, which has area $< \frac{9}{10} \mid \Omega_0 \mid$. Repeat the construction in $\Omega_1$ and so on. The Lemma follows since $\left( \frac{9}{10} \right)^n \longrightarrow 0$ as $n \longrightarrow \infty$.

\end{proof}

Hence, by Lemma \ref{FillingAreaOfDiskWithDisks}, in order to fill a very big proportion of the area of the unit disk $\D$ with smaller disks we are forced to consider disks of different radii.
%, i.e. we cannot do it with all the disks of the same radius. 
This creates a number of technical complications as we will see later.

{\bf{Step 1}}. Choose first $m_{1,1}$ disjoint disks $D(z_{1,1}^i,R_{1,1}) \subset \D$, $i=1,...,m_{1,1}$, and then $m_{1,2}$ disks $D(z_{1,2}^i,R_{1,2}) \subset \D$, $i=1,...,m_{1,2}$, disjoint among themselves and with the previous ones, and then $m_{1,3}$ disks $D(z_{1,3}^i,R_{1,3}) \subset \D$, $i=1,...,m_{1,3}$, disjoint among themselves and with the previous ones, and so on up to $m_{1,l_1}$ disks $D(z_{1,l_1}^i,R_{1,l_1}) \subset \D$, $i=1,...,m_{1,l_1}$, disjoint among themselves and with the previous ones,
so that they cover a big proportion of the unit disk $\D$ (see Lemma \ref{FillingAreaOfDiskWithDisks}.) Then, we have that
\begin{equation}\label{AreaCoveredInFirstStep}
c_1:=m_{1,1}\,(R_{1,1})^2 +  m_{1,2}\,(R_{1,2})^2 + ...+ m_{1,l_1}\,(R_{1,l_1})^2 = 1-\varepsilon_1 
\end{equation}

where $0< \varepsilon_1 <1$ is a very small parameter to be chosen later. By the proof of Lemma \ref{FillingAreaOfDiskWithDisks}, we can assume that all radii $R_{1,j} < \delta_1$, for $j=1, ..., l_1$, for a $\delta_1 >0$ as small as we wish.

Now consider the parameters $\sigma_{1,j} >0$, which we will associate to each one of the disks 
$D(z_{1,j}^i,R_{1,j})$, with $j=1, ..., l_1$, and all possible values of $i$. We associate the same parameter $\sigma_{1,j}$ to all the disks of the form $D(z_{1,j}^i,R_{1,j})$ (so $\sigma_{1,j}$ does not depend on $i$.) The parameters $\sigma_{1,j}$ will be chosen later, and they will all be quite small, say $\sigma_{1,j} < \frac{1}{100}$ for $j=1, ..., l_1$.

Next, let  $r_{1,j}=R_{1,j}$ for $j=1, ..., l_1$. For each $i=1,\dots, m_j$, let $\varphi^i_{1,j}(z)=z^i_{1,j}+(\sigma_{1,j})^K R_{1,j}\,z$ and, using the notation $\alpha D(z,\rho):= D(z,\alpha \rho)$, set
$$\aligned
D^i_j&:=\frac{1}{(\sigma_{1,j})^K}\,\varphi^i_{1,j}(\D)=D(z^i_{1,j}, r_{1,j})\\
(D^i_j)'&:=\varphi^i_{1,j}(\D)=D(z^i_{1,j},(\sigma_{1,j})^K r_{1,j}) \subset D^i_j
\endaligned$$

As the first approximation of the mapping define
$$
 g_1(z)=
\begin{cases}
(\sigma_{1,j})^{1-K}(z-z^i_{1,j})+z^i_{1,j}, &z\in (D^i_j)'\\
\left|\frac{z-z^i_{1,j}}{r_{1,j}}\right|^{\frac{1}{K}-1}(z-z^i_{1,j})+z^i_{1,j}, \; &z\in D^i_j\setminus (D^i_j)'\\
z, & z \notin \cup D^i_j
\end{cases}
$$
This is a $K$-quasiconformal mapping, conformal outside of $\ds \bigcup_{j=1}^{l_1} \bigcup_{i=1}^{m_{1,j}}(D^i_j \setminus (D^i_j)' )$. It maps each $D^i_j$ onto itself and $(D^i_j)'$ onto $(D^i_j)''=D(z^i_{1,j},\sigma_{1,j} \: r_{1,j})$, while the rest of the plane remains fixed. Write $\phi_1=g_1$. 
\\

{\bf{Step 2}}. We have already fixed $l_1, m_{1,j}, R_{1,j}, \sigma_{1,j}$ and $c_1$. Choose now $m_{2,1}$ disjoint disks $D(z_{2,1}^n,R_{2,1}) \subset \D$, $n=1,...,m_{2,1}$, and then $m_{2,2}$ disks $D(z_{2,2}^n,R_{2,2}) \subset \D$, $n=1,...,m_{2,2}$, disjoint among themselves and with the previous ones (within this second step), and then $m_{2,3}$ disks $D(z_{2,3}^n,R_{2,3}) \subset \D$, $n=1,...,m_{2,3}$, disjoint among themselves and with the previous ones (within this second step), and so on up to $m_{2,l_2}$ disks $D(z_{2,l_2}^n,R_{2,l_2}) \subset \D$, $n=1,...,m_{2,l_2}$, disjoint among themselves and with the previous ones (within this second step), so that they cover a big proportion of the unit disk $\D$ (see Lemma \ref{FillingAreaOfDiskWithDisks}.) Then, we have that
\begin{equation}\label{AreaCoveredInSecondStep}
c_2:=m_{2,1}\,(R_{2,1})^2 +  m_{2,2}\,(R_{2,2})^2 + ...+ m_{2,l_2}\,(R_{2,l_2})^2 = 1-\varepsilon_2 
\end{equation}

where $0< \varepsilon_2 <1$ is a very small parameter to be chosen later. By the proof of Lemma \ref{FillingAreaOfDiskWithDisks}, we can assume that all radii $R_{2,k} < \delta_2$, for $k=1, ..., l_2$, for a $\delta_2 >0$ as small as we wish.

Repeating the above procedure, consider now the parameters $\sigma_{2,k} >0$, which we will associate to each one of the disks $D(z_{2,k}^n,R_{2,k})$, with $k=1, ..., l_2$, and all possible values of $n$. We associate the same parameter $\sigma_{2,k}$ to all the disks of the form $D(z_{2,k}^n,R_{2,k})$ (which is why the parameter $\sigma_{2,k}$ does not have an index depending on $n$.) The parameters $\sigma_{2,k}$ will be chosen later, and they will all be quite small, say $\sigma_{2,k} < \frac{1}{100}$ for $k=1, ..., l_2$.

Denote $r_{\{2,k\},\{1,j\}}=R_{2,k}\,\sigma_{1,j} \: r_{1,j}$ and $\varphi^n_{2,k}(z)=z^n_{2,k}+(\sigma_{2,k})^K R_{2,k}\,\,z$, \, and define the auxiliary disks
$$\aligned
D_{j,k}^{i,n}=\phi_1\left(\frac{1}{(\sigma_{2,k})^K}\, \varphi^{i}_{1,j} \circ \varphi^{n}_{2,k}(\D)\right)=D(z^{i,n}_{j,k}, r_{\{2,k\},\{1,j\}})\\
(D_{j,k}^{i,n})'=\phi_1\left(\, \varphi^{i}_{1,j} \circ \varphi^{n}_{2,k}(\D)\right)=D(z^{i,n}_{j,k}\, , (\sigma_{2,k})^K r_{\{2,k\},\{1,j\}})
\endaligned$$
for certain $z^{i,n}_{j,k} \in \D$, where $i=1,\dots,m_{1,j}$, $n=1,\dots,m_{2,k}$, $j=1,\dots,l_1$ and $k=1,\dots,l_2$. Now let

$$
g_2(z)=
\begin{cases}
(\sigma_{2,k})^{1-K}(z-z^{i,n}_{j,k})+z^{i,n}_{j,k}&z\in (D_{j,k}^{i,n})'\\
\left|\frac{z-z^{i,n}_{j,k}}{r_{\{2,k\},\{1,j\}}}\right|^{\frac{1}{K}-1}(z-z^{i,n}_{j,k})+z^{i,n}_{j,k}&z\in D_{j,k}^{i,n}\setminus (D_{j,k}^{i,n})'\\
z&\text{otherwise}
\end{cases}
$$

Clearly, $g_2$ is $K$-quasiconformal, conformal outside of $\ds \bigcup_{i,j,k,n} \left( D_{j,k}^{i,n} \setminus (D_{j,k}^{i,n})' \right)$, maps each $D_{j,k}^{i,n}$ onto itself and $(D_{j,k}^{i,n})'$ onto $(D_{j,k}^{i,n})''=D(z^{i,n}_{j,k},\: \sigma_{2,k}\: r_{\{2,k\},\{1,j\}})$, while the rest of the plane remains fixed. 
Define $\phi_2=g_2\circ\phi_1$.\\

The picture below represents (an approximation of) the $K$-quasiconformal mapping $\phi$ by its first two steps (i.e. by $\phi_2$.) The size of the parameters $\sigma$ has been greatly magnified for the convenience of the reader (so that e.g. the annuli $D_{j}^{i}\setminus (D_{j}^{i})'$ and their images under $\phi$ are much thinner in the picture than in the proof.)

%\begin{figure}[ht]
%\begin{center}
%\includegraphics{recurrent2.eps}
%\end{center}
%\end{figure}

%\begin{figure}[ht]
%\begin{center}
%\includegraphics{prueba3.eps}
%\end{center}
%\end{figure}

\begin{figure}[ht]
\begin{center}
\includegraphics{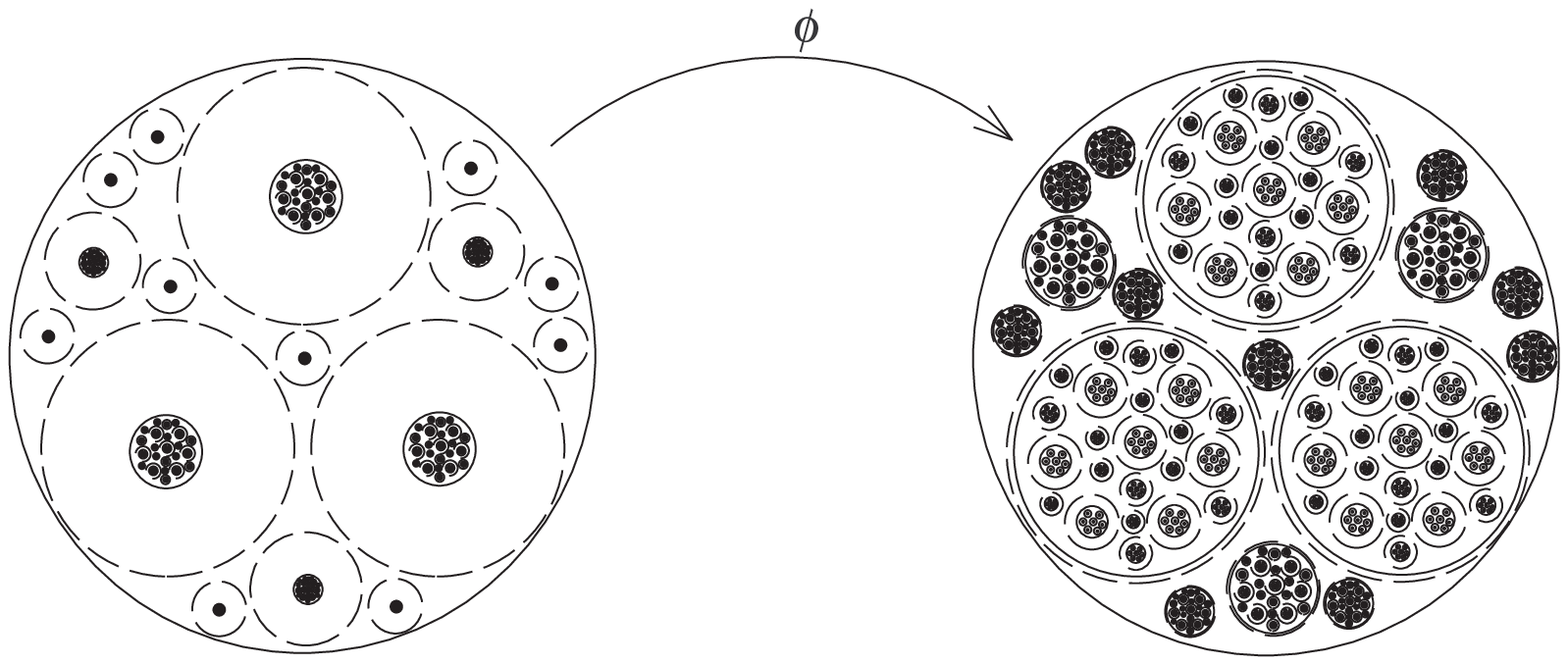}
\end{center}
\end{figure}

\noindent  {\bf{The induction step}}. After step $N-1$ we take $m_{N,1}$ disjoint disks $D(z_{N,1}^q,R_{N,1}) \subset \D$, $q=1,...,m_{N,1}$, and then $m_{N,2}$ disks $D(z_{N,2}^q,R_{N,2}) \subset \D$, $q=1,...,m_{N,2}$, disjoint among themselves and with the previous ones (within this $N^{th}$ step), and then $m_{N,3}$ disks $D(z_{N,3}^q,R_{N,3}) \subset \D$, $q=1,...,m_{N,3}$, disjoint among themselves and with the previous ones (within this $N^{th}$ step), and so on up to $m_{N,l_N}$ disks $D(z_{N,l_N}^q,R_{N,l_N}) \subset \D$, $q=1,...,m_{N,l_N}$, disjoint among themselves and with the previous ones (within this $N^{th}$ step), so that they cover a big proportion of the unit disk $\D$ (see Lemma \ref{FillingAreaOfDiskWithDisks}.) Then, we have that
\begin{equation}\label{AreaCoveredInNthStep}
c_N:=m_{N,1}\,(R_{N,1})^2 +  m_{N,2}\,(R_{N,2})^2 + ...+ m_{N,l_N}\,(R_{N,l_N})^2 = 1-\varepsilon_N 
\end{equation}

where $0< \varepsilon_N <1$ is a very small parameter to be chosen later. By the proof of Lemma \ref{FillingAreaOfDiskWithDisks}, we can assume that all the radii $R_{N,p} < \delta_N$, for $p=1, ..., l_N$, and for a $\delta_N >0$ as small as we wish.

Repeating the above procedure, consider now the parameters $\sigma_{N,p} >0$, which we will associate to each one of the disks $D(z_{N,p}^q,R_{N,p})$, with $p=1, ..., l_N$, and all possible values of $q$. We associate the same parameter $\sigma_{N,p}$ to all the disks of the form $D(z_{N,p}^q,R_{N,p})$ (so the parameter $\sigma_{N,p}$ does not depend on $q$.) The parameters $\sigma_{N,p}$ will be chosen later, and they will all be quite small, say $\sigma_{N,p} < \frac{1}{100}$ for $p=1, ..., l_N$.

Denote then 
$r_{ \{N,p\}, \{N-1,h\}, \ldots , \{2,k\}, \{1,j\} }=R_{N,p}\,\,\sigma_{N-1,h}\,\, 
r_{ \{N-1,h\}, \ldots , \{2,k\}, \{1,j\} }$, and $\varphi^q_{N,p}(z)=z^q_{N,p}+ (\sigma_{N,p})^K \, R_{N,p}\,z$. For any multiindexes $I=(i_1,...,i_N)$ and  $J=(j_1,...,j_N)$, where $1\leq i_k\leq m_{k,j_k}$, $1\leq j_k\leq l_k$, and $k=1,...,N$, let 

\begin{equation}\label{FormulaDIJAndDIJPrime}
\aligned
D^{I}_{J}=\phi_{N-1}\left(\frac{1}{(\sigma_{N,p})^K}\, \varphi^{i_1}_{1,j_1} \circ \dots \circ \varphi^{i_N}_{N,j_N}(\D) \right) = D\left(z^{I}_{J}, r_{ \{N,p\}, \{N-1,h\}, \ldots , \{2,k\}, \{1,j\} } \right)\\
(D^{I}_{J})'=\phi_{N-1}\left( \varphi^{i_1}_{1,j_1}  \circ \dots \circ \varphi^{i_N}_{N,j_N}(\D) \right) = D\left(z^{I}_{J}, (\sigma_{N,p})^K \, r_{ \{N,p\}, \{N-1,h\}, \ldots , \{2,k\}, \{1,j\} }\right)
\endaligned
\end{equation}
and let

$$
g_N(z)=
\begin{cases}
(\sigma_{N,p})^{1-K}(z-z^{I}_{J})+z^{I}_{J}&z\in (D^{I}_{J})'\\
\left|\frac{z-z^{I}_{J}}{r_{ \{N,p\}, \{N-1,h\}, \ldots , \{2,k\}, \{1,j\} }}\right|^{\frac{1}{K}-1}(z-z^{I}_{J})+z^{I}_{J}&z\in D^{I}_{J}\setminus (D^{I}_{J})'\\
z&\text{otherwise}
\end{cases}
$$

Clearly, $g_N$ is $K$-quasiconformal, conformal outside of $\ds \bigcup_{\substack{I=(i_1,...,i_N)\\J=(j_1,...,j_N)}} \left( \, D^{I}_{J}\setminus (D^{I}_{J})' \, \right)$, maps $D^{I}_{J}$ onto itself and $(D^{I}_{J})'$ onto $(D^{I}_{J})''=D \left( z^{I}_{J},\, \sigma_{N,p} \, \, r_{ \{N,p\}, \{N-1,h\}, \ldots , \{2,k\}, \{1,j\} } \right)$, while the rest of the plane remains fixed. 
Now define $\phi_N=g_N\circ\phi_{N-1}$.\\  
\\
Since each $\phi_N$ is $K$-quasiconformal and  equals the identity  outside the unit disk $\D$, there exists a limit $K$-quasiconformal mapping  
$$\phi=\lim_{N\to\infty}\phi_N$$
with convergence in $W^{1,p}_{loc}(\C)$ for any $p<\frac{2K}{K-1}$.

On the other hand, $\phi$ maps the compact set $$E=\displaystyle\bigcap_{N=1}^\infty\left(
\ds \bigcup_{\substack{i_1,...,i_N \\ j_1,...,j_N}}
\varphi^{i_1}_{1,j_1}  \circ \dots \circ \varphi^{i_N}_{N,j_N} \left( \, \overline{\D} \, \right)
\right)$$
to the compact set
$$\phi(E)=\bigcap_{N=1}^\infty\left(
\ds \bigcup_{\substack{i_1,...,i_N \\ j_1,...,j_N}}
\psi^{i_1}_{1,j_1}  \circ \dots \circ \psi^{i_N}_{N,j_N} \left( \, \overline{\D} \, \right)
\right)$$
where we have written $\psi^{i_k}_{k,j_k}(z)=z^{i_k}_{k,j_k} + \sigma_{k,j_k} \, R_{k,j_k} \, z$, and where $1\leq i_k\leq m_{k,j_k}$, $1\leq j_k\leq l_k$, and $k \in \N$. 

Notice that with our notation, a building block in the $N^{th}$ step of the construction in the source set $E$ (i.e. a set of the type $\varphi^{i_1}_{1,j_1}  \circ \dots \circ \varphi^{i_N}_{N,j_N} \left( \, \overline{\D} \, \right)$) is a disk with radius given by 
\begin{equation}\label{RadiusSourceNthStep}
s_{j_1,...,j_N}=\left( (\sigma_{1,j_1})^K \, R_{1,j_1} \right) \dots \left( (\sigma_{N,j_N})^K R_{N,j_N} \right) 
\end{equation}

and a building block in the $N^{th}$ step of the construction in the target set $\phi(E)$ (i.e. a set of the type $\psi^{i_1}_{1,j_1}  \circ \dots \circ \psi^{i_N}_{N,j_N} \left( \, \overline{\D} \, \right)$) is a disk with radius given by 
\begin{equation}\label{RadiusTargetNthStep}
t_{j_1,...,j_N}=\left( \sigma_{1,j_1} \, R_{1,j_1} \right) \dots \left( \sigma_{N,j_N} \, R_{N,j_N} \right)
\end{equation}

\section{Examples of extremal distortion for Hausdorff measures}\label{ConstructionForUsualHausdorffMeasures}

In this section we prove Theorem \ref{TheoremConstructionForUsualHausdorffMeasures}, i.e. we deal with Hausdorff measures $\H^\alpha$ with gauge function $h(t)=t^\alpha$, where $0<\alpha<2$. We want to choose the parameters from section \ref{BasicConstruction} to yield a compact set $E$ such that $0< \H^d(E) < \infty$ and $0< \H^{d'}(\phi(E)) < \infty$, where $d'=\frac{2Kd}{2+(K-1)d}$ (see equations \eqref{abscont} and \eqref{distortionofdimension}.)

On the first step of the construction, we have the equation corresponding to the proportion of area taken by the disks in the first step, which is equation \eqref{AreaCoveredInFirstStep} that we repeat here for the convenience of the reader:
\begin{equation}\label{AreaCoveredInStep1}
c_1:=m_{1,1}\,(R_{1,1})^2 +  m_{1,2}\,(R_{1,2})^2 + ...+ m_{1,l_1}\,(R_{1,l_1})^2 = 1-\varepsilon_1 
\end{equation}

Since we aim at $\H^{d}(E)\approx 1$, (see equation \eqref{RadiusSourceNthStep}) it would be desirable to have
\begin{equation}\label{SourceEquationInStep1}
m_{1,1}\,\left[ (\sigma_{1,1})^K \, R_{1,1} \right]^d +  m_{1,2}\,\left[ (\sigma_{1,2})^K \, R_{1,2} \right]^d  + ...+ m_{1,l_1}\,\left[ (\sigma_{1,l_1})^K \, R_{1,l_1} \right]^d \approx 1 
\end{equation}

and in order to have $\H^{d'}(E)\approx 1$, (see equation \eqref{RadiusTargetNthStep}) it would also be desirable to have
\begin{equation}\label{TargetEquationInStep1}
m_{1,1}\,\left[ \sigma_{1,1} \, R_{1,1} \right]^{d'} +  m_{1,2}\,\left[ \sigma_{1,2} \, R_{1,2} \right]^{d'}  + ...+ m_{1,l_1}\,\left[ \sigma_{1,l_1} \, R_{1,l_1} \right]^{d'}  \approx 1 
\end{equation}

In general, for the $N^{th}$ step of the construction, we have the equation corresponding to the proportion of area taken by the disks in the $N^{th}$ step, which is equation \eqref{AreaCoveredInNthStep} that we repeat here for the convenience of the reader:
\begin{equation}\label{AreaCoveredInStepN}
c_N:=m_{N,1}\,(R_{N,1})^2 +  m_{N,2}\,(R_{N,2})^2 + ...+ m_{N,l_N}\,(R_{N,l_N})^2 = 1-\varepsilon_N 
\end{equation}

Analogously, it would be desirable to have (regarding the source set) 
\begin{equation}\label{SourceEquationInStepN}
m_{N,1}\,\left[ (\sigma_{N,1})^K \, R_{N,1} \right]^d +  m_{N,2}\,\left[ (\sigma_{N,2})^K \, R_{N,2} \right]^d  + ...+ m_{N,l_N}\,\left[ (\sigma_{N,l_N})^K \, R_{N,l_N} \right]^d \approx 1 
\end{equation}

and, regarding the target set,
\begin{equation}\label{TargetEquationInStepN}
m_{N,1}\,\left[ \sigma_{N,1} \, R_{N,1} \right]^{d'} +  m_{N,2}\,\left[ \sigma_{N,2} \, R_{N,2} \right]^{d'}  + ...+ m_{N,l_N}\,\left[ \sigma_{N,l_N} \, R_{N,l_N} \right]^{d'}  \approx 1 
\end{equation}

The Cantor-type sets $E$ and $\phi(E)$ are not self-similar (since on each step we introduce a different number and configuration of disks.) On top of that, the disks introduced at each step are of very different sizes among themselves, and their centers are by far non-uniformly distributed inside $\D$. Hence the technical difficulties to compute Hausdorff measures or dimension of these sets are, in general, quite substantial (at least to our knowledge.) The equations that appear for the source and target sets at scale $N$ are:
\begin{equation}\label{GeneralEquationSourceSetAtScaleN}
\sum_{j_1,...,j_N}  m_{1,j_1} m_{2,j_2} \ldots m_{N,j_N} \left( s_{j_1,...,j_N} \right)^d \approx 1
\end{equation}

(see equation \eqref{RadiusSourceNthStep}), and (see equation \eqref{RadiusTargetNthStep})
\begin{equation}\label{GeneralEquationTargetSetAtScaleN}
\sum_{j_1,...,j_N}  m_{1,j_1} m_{2,j_2} \ldots m_{N,j_N} \left( t_{j_1,...,j_N} \right)^{d'} \approx 1
\end{equation}

One of the merits of this paper is to present one way of handling such Cantor-type sets.

The first step is to notice, when comparing equations \eqref{SourceEquationInStep1}  and \eqref{TargetEquationInStep1} (or equations \eqref{SourceEquationInStepN} and \eqref{TargetEquationInStepN}), that for any two numbers  $\sigma , R >0$, the following identity holds (recall $d' = \frac{2Kd}{2+(K-1)d} $):
\begin{equation}\label{EquationSigmaR}
\left( \sigma^K \, R \right)^d = 
\left( \sigma \, R \right)^{ \frac{2Kd}{2+(K-1)d} } \left(  \frac{ \sigma^{dK} }{ R^{2-d} } \right)^{ \frac{d(K-1)}{2+(K-1)d} }
\end{equation}

This identity, which follows from elementary calculations, suggests the choice of parameters 
\begin{equation}\label{SigmaAsFunctionOfR}
(\sigma_{k,j_k})^{dK} = (R_{k,j_k})^{2-d}
\end{equation}

for all possible values of $k$ and $j_k$. This choice of parameters makes the left-hand sides of equations \eqref{SourceEquationInStepN} and \eqref{TargetEquationInStepN} equal. This algebraic identity is another key idea in this paper. It was originally seen in the simpler but important case $d=\frac{2}{K+1}$ and $d' =1$, where the following simplified identity holds:
\begin{equation}\label{EquationSigmaRInCaseTargetDimensionIs1}
\left( \sigma^K \, R \right)^{\frac{2}{K+1}} = \sigma \, R \left(  \frac{ \sigma }{ R } \right)^{ \frac{K-1}{K+1} }
\end{equation}

which suggests the choice $\sigma = R$ in that case (which is consistent with \eqref{SigmaAsFunctionOfR}.)

Moreover, the choice \eqref{SigmaAsFunctionOfR} 
%motivated by the algebraic identity \eqref{EquationSigmaR} 
actually has some geometric meaning. Namely, 
%with \eqref{SigmaAsFunctionOfR}, one has 
\begin{equation}\label{SimplificationOfTargetEquationToArea}
\left( \sigma^K \, R \right)^d = \left( \sigma \, R \right)^{ \frac{2Kd}{2+(K-1)d} } = \left( \sigma \, R \right)^{d'} = R^2
\end{equation}

i.e. the left-hand side of equation \eqref{TargetEquationInStepN} equals the left-hand side of equation \eqref{AreaCoveredInStepN}, which has a clear geometric interpretation in terms of area, already mentioned. This geometric interpretation is another key idea of the paper and it will prove very useful later.

Summarizing, with this choice of parameters, (see equations \eqref{AreaCoveredInStepN}, \eqref{SourceEquationInStepN} and \eqref{TargetEquationInStepN}) we have 

\begin{eqnarray}\label{AreaSourceAndTargetConditionsBecomeTheSameAtScaleN}
c_N &=& m_{N,1}\,(R_{N,1})^2 +  m_{N,2}\,(R_{N,2})^2 + ...+ m_{N,l_N}\,(R_{N,l_N})^2 = \nonumber \\
&=& m_{N,1}\,\left[ (\sigma_{N,1})^K \, R_{N,1} \right]^d +  m_{N,2}\,\left[ (\sigma_{N,2})^K \, R_{N,2} \right]^d  + ...+ m_{N,l_N}\,\left[ (\sigma_{N,l_N})^K \, R_{N,l_N} \right]^d = \nonumber \\
&=& m_{N,1}\,\left[ \sigma_{N,1} \, R_{N,1} \right]^{d'} +  m_{N,2}\,\left[ \sigma_{N,2} \, R_{N,2} \right]^{d'}  + ...+ m_{N,l_N}\,\left[ \sigma_{N,l_N} \, R_{N,l_N} \right]^{d'} = \nonumber \\
&=& 1-\varepsilon_N 
\end{eqnarray}

Then we have that (see equations \eqref{GeneralEquationSourceSetAtScaleN} and \eqref{GeneralEquationTargetSetAtScaleN})
\begin{equation}\label{CoveringByBuildingBlocksConditionAtScaleNForSourceAndTarget}
\sum_{j_1,...,j_N}  m_{1,j_1} m_{2,j_2} \ldots m_{N,j_N} \left( s_{j_1,...,j_N} \right)^d = 
%\nonumber \\
\sum_{j_1,...,j_N}  m_{1,j_1} m_{2,j_2} \ldots m_{N,j_N} \left( t_{j_1,...,j_N} \right)^{d'} = 
%\nonumber \\
\prod_{n=1}^{N} \left( 1- \varepsilon_n \right) 
\end{equation}

Notice that the product term in \eqref{CoveringByBuildingBlocksConditionAtScaleNForSourceAndTarget} is the proportion of area of $\D$ occupied by the (dilated and translated versions of the) disks $D(z_{N,p}^q,R_{N,p})$, with $p=1, ..., l_N$, and $1 \leq q \leq m_{N,p}$ so that they are placed inside the corresponding disks of all previous steps in section \ref{BasicConstruction} (see above \eqref{AreaCoveredInNthStep}.) This geometric interpretation will be useful later and provides for an essentially automatic way to check condition (3) in Theorem 4.12 in \cite{mattila}.

Now take $\varepsilon_n \rightarrow 0$ so fast that 
\begin{equation}\label{ProductOfAreasConverges}
\prod_{n=1}^{\infty} \left( 1- \varepsilon_n \right) \approx 1
\end{equation}

Such a choice of $\sigma 's$ and $\varepsilon_n$ will make equations \eqref{GeneralEquationSourceSetAtScaleN} and \eqref{GeneralEquationTargetSetAtScaleN} and an area condition true at the same time, as we will see later.

As a consequence, since the $R_{k,j_k}$ can all be taken small enough that all $\sigma_{k,j_k}<1$ and that all $R_{k,j_k}<\frac{1}{2}$, then
$\diam(\varphi^{i_1}_{1,j_1}  \circ \dots \circ \varphi^{i_N}_{N,j_N} \left( \, \overline{\D} \, \right)) = 2 \, s_{j_1,...,j_N}  \leq 2\, \left( \frac{1}{2} \right)^N \to 0$ when $N\to \infty$, and we have by \eqref{GeneralEquationSourceSetAtScaleN}, 
\begin{eqnarray}\label{UpperHausdorffMeasureEstimateForSource}
\H^d(E)=\lim_{\delta\to 0}\H^{h}_{\delta}(E)\leq 
\lim_{N\to \infty} 
\sum_{\substack{i_1,\dots,i_{N} \\ j_1,\dots,j_{N} }}
\left[ \diam(\varphi^{i_1}_{1,j_1}  \circ \dots \circ \varphi^{i_N}_{N,j_N} \left( \, \overline{\D} \, \right)) \right]^d \approx   \nonumber \\
\approx \lim_{N\to \infty}  \sum_{j_1,...,j_N}  m_{1,j_1} m_{2,j_2} \ldots m_{N,j_N} \left( s_{j_1,...,j_N} \right)^d 
=\prod_{n=1}^{\infty} \left( 1- \varepsilon_n \right)
\approx 1
\end{eqnarray}

A similar argument based on \eqref{GeneralEquationTargetSetAtScaleN}, yields $\H^{d'}(\phi(E))\lesssim 1$. We have established the following 

\begin{lemma}\label{UpperBoundsForHausdorffMeasureInSourceAndTarget}
With the notation as above, we have that
$$\H^{d}(E)\lesssim 1 \, \, \text{ and that } \, \, 
\H^{d'}(\phi(E))\lesssim 1. $$
\end{lemma}

As usual, the lower estimates for Hausdorff measures take some more work to establish than the upper estimates. Let us this time work with the target set $\phi(E)$. 

Fix a building block $D$ at scale $N-1$ for the target set, i.e. let $D = \psi^{i_1}_{1,j_1}  \circ \dots \circ \psi^{i_{N-1}}_{N-1,j_{N-1}} \left( \, \overline{\D} \, \right)$ for some choice of $i_k$ and $j_k$, $1 \leq k \leq N-1$. As is traditional, we will call the building blocks at scale $N$ contained in $D$, the children of $D$. I.e. the children of $D$ are the disks of the form $B = \psi^{i_1}_{1,j_1}  \circ \dots \circ \psi^{i_{N-1}}_{N-1,j_{N-1}} \circ \psi^{i_{N}}_{N,j_{N}}  \left( \, \overline{\D} \, \right)$, for some choice of $i_{N}$ and $j_{N}$, but with the same choices of $i_k$ and $j_k$ for $1 \leq k \leq N-1$ as for $D$. The genealogical terminology (parents, cousins, descendants, generation, etc.) has the obvious meaning in this context.

For a given building block $B$, let us denote by $r(B)$ its radius. Observe that, with the above notation, by \eqref{SimplificationOfTargetEquationToArea} and \eqref{AreaCoveredInStepN},
\begin{eqnarray}\label{SummingInAllChildrenOfD}
\sum_{B_n \, \text{ children of } \,D} r(B_n)^{d'} &=& %\nonumber \\
%
%&=& 
\left[ \sigma_{1,j_1} \, R_{1,j_1} \dots 
\sigma_{N-1,j_{N-1}} \, R_{N-1,j_{N-1}} \right]^{d'} \, 
\sum_{j_N} m_{N,j_N} \left( \sigma_{N,j_N} \, R_{N,j_N}  \right)^{d'} = \nonumber \\
&=& r(D)^{d'} \, \sum_{j_N} m_{N,j_N} \left( R_{N,j_N} \right)^2 = 
r(D)^{d'} \, (1-\varepsilon_N) 
\end{eqnarray}

Consequently, if $D$ is a fixed building block at scale $L$, and we fix a finite family of building blocks $\{B_n \}$ with $B_n \subset D$, (the $B_n$ need not all be in the same generation), and if $\G(B_n)$ denotes the generation $B_n$ belongs to (i.e. the step in the construction in which it appears), assume $\max \G(B_n) = N$. Let $\{B_{N,k} \}$ be the collection of all descendants of elements of $\{B_n \}$ of generation $N$. Then, by \eqref{SummingInAllChildrenOfD}, we get
\begin{equation}\label{SummingInAllNGenerationDescendantsOfASubfamilyOfD}
\sum_{B_n} r(B_n)^{d'} \approx \sum_{B_{N,k}} r(B_{N,k})^{d'}
\end{equation}

where the comparability constants can be taken independent of $N$, i.e. they can be taken to be $\ds C = \prod_{n=1}^{\infty} \left( 1- \varepsilon_n \right)$ and $\frac{1}{C}$.
\\

We are aiming at the following

\begin{lemma}\label{LemmaPackingTarget}
Let $B$ be an arbitrary disk and $B_n$ be disjoint building blocks for $\phi(E)$, i.e. disks of the form $B_n = \psi^{i_1}_{1,j_1}  \circ \dots \circ \psi^{i_{N_n-1}}_{N_n-1,j_{N_n-1}} \circ \psi^{i_{N_n}}_{N_n,j_{N_n}}  \left( \, \overline{\D} \, \right)$, for some choice of indexes $N_n$, $i_k$, and $j_k$, for $1 \leq k \leq N_n$.

Let $\mathcal{C} = \{ B_n \}$ be a family of such building blocks $B_n$. We say $\mathcal{C} $ is admissible for $B$ (denoted by $\mathcal{C} \in \mathcal{A}(B)$) if all elements $B_n$ of $\mathcal{C} $ are pairwise disjoint and satisfy $B_n \subset B$.

There exists an absolute constant $C_1$ such that if $\mathcal{C} \in \mathcal{A}(B)$, then we have the following Carleson packing condition

\begin{equation}\label{PackingTarget}
\sum_{B_n \in \mathcal{C} } r(B_n)^{d'} \leq C_1 \, r(B)^{d'}.
\end{equation}\\
\end{lemma}

Precisely \eqref{PackingTarget} is the key step in proving the lower bound for the Hausdorff measure estimate, as is to be expected.

\begin{proof} (Of Lemma \ref{LemmaPackingTarget}.)

%It should be noted that \eqref{PackingTarget} is well defined in the following sense. If we show that there exists $C_1$ so that for any disk $B$, there exists a disjoint family of building blocks $\{B_n^{(B)}\}$ which is maximal with respect to inclusion in $B$, so that \eqref{PackingTarget} holds for $\{B_n^{(B)}\}$, then there exists $C_2$ so that for any disk $B$ and any disjoint family of building blocks $\{B_n'\}$ with $B_n'\subset B$, \eqref{PackingTarget} also holds with the constant $C_2$ instead of $C_1$. This follows by taking finite subfamilies,  \eqref{SummingInAllNGenerationDescendantsOfASubfamilyOfD}, and taking sups in the number of elements in the subfamilies. Consequently, besides having checked that \eqref{PackingTarget} is well defined, we also see that we only need to show \eqref{PackingTarget} for one single subfamily of building blocks $\{B_n^{(B)}\}$, maximal with respect to inclusion in $B$, for any given disk $B$, with a constant $C_1$ independent of $B$.

Let us fix a family $\mathcal{C} = \{ B_n \} \in \mathcal{A}(B)$. We can assume without loss of generality that $\mathcal{C}$ is a finite family. (Indeed, $\C$ is separable, so $\mathcal{C}$ contains at most countably many elements. Proving \eqref{PackingTarget} with the same constant $C_1$ for any finite subcollection of elements of $\mathcal{C}$ yields \eqref{PackingTarget} for $\mathcal{C}$.)

An iteration of \eqref{SummingInAllChildrenOfD} shows that \eqref{PackingTarget} holds for $B = \D$, and hence, we can assume without loss of generality that $r(B) \lesssim 1$. Now fix a disk $B$ and a maximal family of building blocks $\{B_n\}$ with $B_n \subset B$. Let $H$ be the unique integer with the property that there exists a building block disk $B^{H-1}_{i_0}$ of generation $\G (B^{H-1}_{i_0}) = H-1$, (i.e. $B^{H-1}_{i_0} = \psi^{i_1}_{1,j_1}  \circ \dots \circ \psi^{i_{H-1}}_{H-1,j_{H-1}} (\overline{\D})$ for some choice of $i_1, \dots ,i_{H-1}$ and of $j_1, \dots ,j_{H-1}$) such that $B_n \subset B^{H-1}_{i_0}$ for all $n$, but there is no building block disk $B^H_{j_0}$ of generation $\G ( B^H_{j_0} ) = H$ such that $B_n \subset B^{H}_{j_0}$ for all $n$. I.e. all the $B_n$ are descendants of some siblings $B^H_{k_0}, B^H_{k_1}, \dots , B^H_{k_m}$, with $m \geq 1$ (i.e. at least there are two siblings), which have a common father $B^{H-1}_{i_0}$. We assume that $\{ B^H_{k_p} \}_{p=0}^{m}$ is a complete list of ancestors of generation $H$ of the family $\{ B_n \}$.

Again an iteration of \eqref{SummingInAllChildrenOfD} shows that \eqref{PackingTarget} holds for $B = B^{H-1}_{i_0}$, and hence, we can assume without loss of generality that 
\begin{equation}\label{wlogRBLessThanRBHMinus1}
r(B) \lesssim r(B^{H-1}_{i_0}) = t_{j_1,...,j_{H-1}}= \left( \sigma_{1,j_1} \, R_{1,j_1} \right) \dots \left( \sigma_{H-1,j_{H-1}} \, R_{H-1,j_{H-1}} \right).
\end{equation}

Each $B^H_{k_p}$, $p=0,1, \dots , m$, has radius 
\begin{equation}\label{RadiusBHkp}
r(B^H_{k_p})= t_{j_1,...,j_{H-1},j_H}=\left( \sigma_{1,j_1} \, R_{1,j_1} \right) \dots \left( \sigma_{H-1,j_{H-1}} \, R_{H-1,j_{H-1}} \right) \left( \sigma_{H,j_H} \, R_{H,j_H} \right),
\end{equation}
for some choice of $j_1,...,j_{H-1},j_H$. To make the dependence on $p$ more apparent (notice that it only appears in the $H^{th}$ index, since the first $H-1$ entries are the same for all $p$ and they are determined by $B^{H-1}_{i_0}$), we will denote 
\begin{equation}\label{RadiusBHkpAsFunctionOfP}
r(B^H_{k_p})= t_{j_1,...,j_{H-1},j_{H_{k_p}}}=\left( \sigma_{1,j_1} \, R_{1,j_1} \right) \dots \left( \sigma_{H-1,j_{H-1}} \, R_{H-1,j_{H-1}} \right) \left( \sigma_{H,j_{H_{k_p}}} \, R_{H,j_{H_{k_p}}} \right).
\end{equation}

Associated to each $B^H_{k_p}$, consider the disk $\widetilde{B^H_{k_p}}$, which is concentric to $B^H_{k_p}$, and has radius 
\begin{eqnarray}\label{RadiusWideTildeBHkp}
r(\widetilde{B^H_{k_p}}) &=& \frac{t_{j_1,...,j_{H-1},j_H}}{\sigma_{H,j_H}} = \left( \sigma_{1,j_1} \, R_{1,j_1} \right) \dots \left( \sigma_{H-1,j_{H-1}} \, R_{H-1,j_{H-1}} \right) \left( R_{H,j_H} \right) = \nonumber \\
&=& \frac{t_{j_1,...,j_{H-1},j_{H_{k_p}}}}{\sigma_{H,j_{H_{k_p}}}} = \left( \sigma_{1,j_1} \, R_{1,j_1} \right) \dots \left( \sigma_{H-1,j_{H-1}} \, R_{H-1,j_{H-1}} \right) \left( R_{H,j_{H_{k_p}}} \right)
\end{eqnarray}

Notice that, for the appropriate multiindexes $I=(i_1,...,i_H)$ and  $J=(j_1,...,j_H)$, the disks $\widetilde{B^H_{k_p}}$ are precisely the disks $D^{I}_{J}$ from \eqref{FormulaDIJAndDIJPrime}. In particular, the disks $\widetilde{B^H_{k_p}}$ result from applying a dilation of ratio $r(B^{H-1}_{i_0}) = t_{j_1,...,j_{H-1}}= \left( \sigma_{1,j_1} \, R_{1,j_1} \right) \dots \left( \sigma_{H-1,j_{H-1}} \, R_{H-1,j_{H-1}} \right)$ (and an appropriate translation) to the disks chosen in the $N^{th}$ step, as in \eqref{AreaCoveredInNthStep}, and they are all contained in $B^{H-1}_{i_0}$.

\begin{figure}[ht]
\begin{center}
\includegraphics{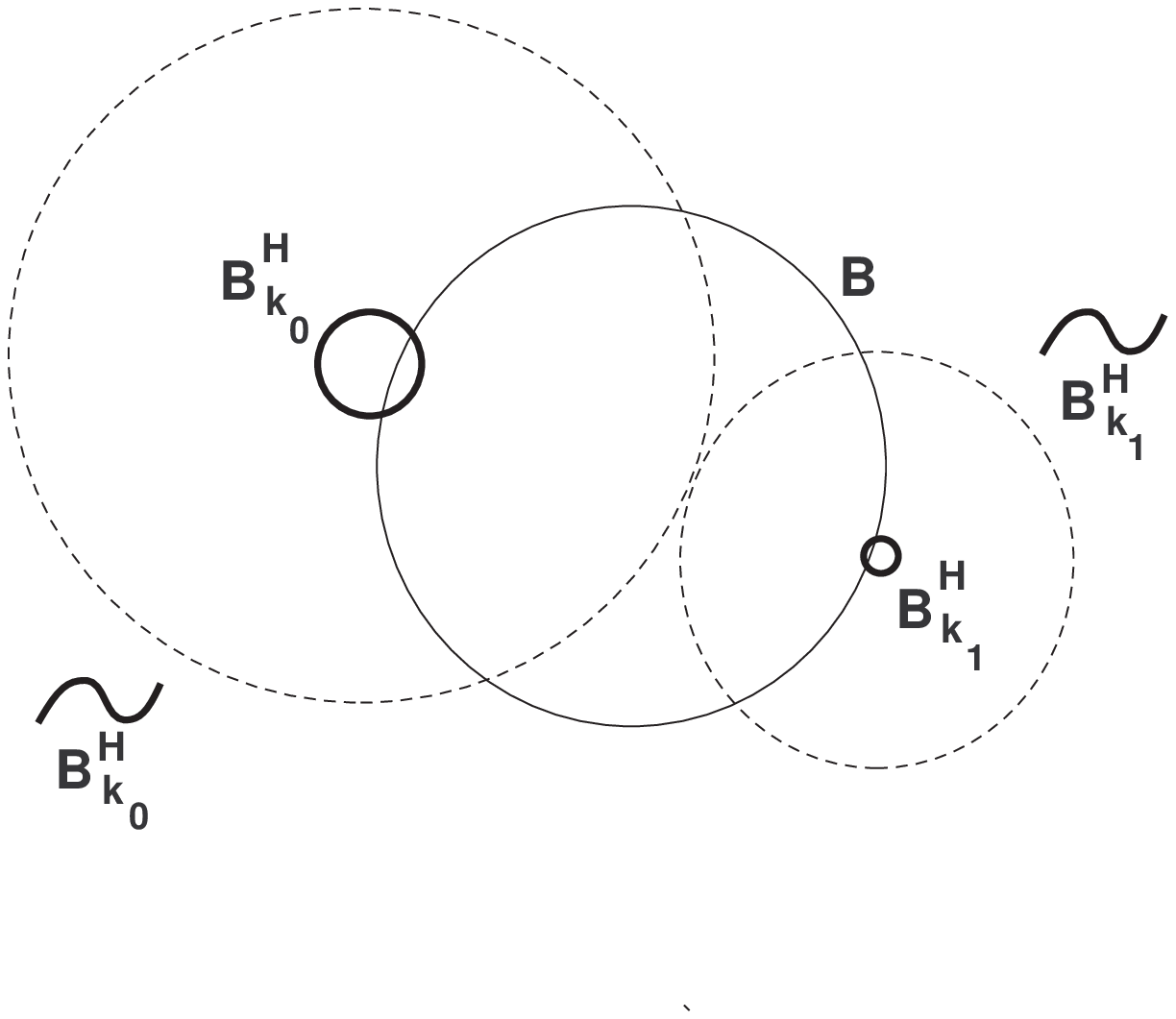}
\end{center}
\end{figure}

Recall now (see \eqref{SigmaAsFunctionOfR}) that the parameters $R_{N,p}$ are chosen so small that the parameters $\sigma_{N,p}$ are also quite small, say $< \, \frac{1}{100}$. Given that $B \cap B^H_{k_p} \neq \emptyset $ for $p=0,1, \dots , m $, ($m \geq 1$) this implies 
\begin{equation}\label{CompareRadiusBWithRadiusWideTildeBHkp}
2 \, r(B) \geq \frac{99}{100} \,\, r(\widetilde{B^H_{k_p}})
\end{equation}
for $p=0,1, \dots , m $, since $B^H_{k_p}$ is a disk concentric to $\widetilde{B^H_{k_p}}$, tiny in comparison with $\widetilde{B^H_{k_p}}$, and the disks $\widetilde{B^H_{k_p}}$ are pairwise disjoint.

Consequently, for $p=0,1, \dots , m $,
\begin{equation}\label{DilatedBCoversBHkp}
B^H_{k_p} \subset 2B \,\,\,\,\, \text{ and } \,\,\,\,\, \widetilde{B^H_{k_p}} \subset 4B.
\end{equation}

And now, by \eqref{SummingInAllChildrenOfD}, \eqref{RadiusBHkpAsFunctionOfP}, \eqref{RadiusTargetNthStep}, \eqref{SimplificationOfTargetEquationToArea}, and \eqref{RadiusWideTildeBHkp},
\begin{eqnarray}\label{ProofPackingTarget}
\sum_{B_n \in \mathcal{C} } r(B_n)^{d'}  & \leq & \sum_{p=0}^{m} r(B^H_{k_p})^{d'} = \nonumber \\
& = & \left[   \left( \sigma_{1,\,j_1} \, R_{1,\,j_1} \right) \dots \left( \sigma_{H-1,\,j_{H-1}} \, R_{H-1,\,j_{H-1}} \right)   \right]^{d'} \, \sum_{p=0}^{m} \left( \sigma_{H,\,j_{H_{k_p}}} \, R_{H,\,j_{H_{k_p}}} \right)^{d'} = \nonumber \\
& = & \left[  t_{j_1,...,j_{H-1}}  \right]^{d'} \, \sum_{p=0}^{m} \left( R_{H,\,j_{H_{k_p}}} \right)^{2} = \nonumber \\
& = & \left[  t_{j_1,...,j_{H-1}}  \right]^{d'} \, \frac{1}{\pi} \, \sum_{p=0}^{m}  area  \left(  D (z_{H,j_{H_{k_p}}}^{k_p},R_{H,j_{H_{k_p}}} )  \right). 
\end{eqnarray}

The notation for the last step of \eqref{ProofPackingTarget} is the following. The disks  $D(z_{H,j_{H_{k_p}}}^{k_p},R_{H,j_{H_{k_p}}})$ are those chosen in the induction step in section \ref{BasicConstruction} (see  \eqref{AreaCoveredInNthStep} and the paragraph before it.)
Since $B^{H-1}_{i_0} = \psi^{i_1}_{1,j_1}  \circ \dots \circ \psi^{i_{H-1}}_{H-1,j_{H-1}} (\overline{\D}) = f^{(H-1)}_{i_0} ( \, \overline{\D} \, )$ for some choice of $i_1, \dots ,i_{H-1}$ and of $j_1, \dots ,j_{H-1}$, then $f^{(H-1)}_{i_0} ( \, D (z_{H,j_{H_{k_p}}}^{k_p},R_{H,j_{H_{k_p}}} )  \, ) =  \widetilde{B^H_{k_p}} $. With respect to the radii, $ r\left(  D(z_{H,j_{H_{k_p}}}^{k_p},R_{H,j_{H_{k_p}}})  \right)  = R_{H,j_{H_{k_p}}} = \frac{1}{t_{j_1,...,j_{H-1}}} \, r\left(  \widetilde{B^H_{k_p}} \right)$.
Now using the fact that the disks $D(z_{H,j_{H_{k_p}}}^{k_p},R_{H,j_{H_{k_p}}})$ are pairwise disjoint, \eqref{DilatedBCoversBHkp} and the observation that $f^{(H-1)}_{i_0}$ is a dilation of ratio $t_{j_1,...,j_{H-1}}$ composed with a translation, we get that
\begin{eqnarray}\label{ProofPackingTargetPart2}
\left[  t_{j_1,...,j_{H-1}}  \right]^{d'}  &  \frac{1}{\pi} &  \, \sum_{p=0}^{m}  area  \left(  D (z_{H,j_{H_{k_p}}}^{k_p},R_{H,j_{H_{k_p}}} )  \right) \leq  
% \nonumber \\
%
 \left[  t_{j_1,...,j_{H-1}}  \right]^{d'} \, \frac{1}{\pi} \,  area  \left( \left(f^{(H-1)}_{i_0}\right)^{-1} (4B) \right) =  \nonumber \\
& = \, & \left[  t_{j_1,...,j_{H-1}}  \right]^{d'} \, \left[ r \, ( \left(f^{(H-1)}_{i_0}\right)^{-1} (4B)  \, ) \right]^2 =  
% \nonumber \\
%
 \left[  t_{j_1,...,j_{H-1}}  \right]^{d'} \, \left( \frac{r(4B)}{ t_{j_1,...,j_{H-1}} }   \right)^2 \lesssim   \nonumber \\
& \lesssim \, & \left[  t_{j_1,...,j_{H-1}}  \right]^{d'} \, \left( \frac{r(4B)}{ t_{j_1,...,j_{H-1}} }   \right)^{d'} \lesssim r(B)^{d'}
\end{eqnarray}
where, in the first inequality, we used \eqref{wlogRBLessThanRBHMinus1} and that $0<d'<2$.

Putting together \eqref{ProofPackingTarget} and \eqref{ProofPackingTargetPart2}, gives Lemma \ref{LemmaPackingTarget}.

\end{proof}

Now take a finite covering $\{U_j\}$ of $\phi(E)$ by open disks of diameter $\diam(U_j)\leq\delta$ and let $\delta_0 >0$ be the Lebesgue number of $\{ U_j \}$ (i.e. if $A \subseteq \phi(E)$ with $\diam(A) \leq \delta_0$, then $A \subset U_i$ for some $i$, see e.g. \cite{willard} p.163.)
Denote by $N_0$ the minimal integer such that $t_{j_1,...,j_{N_0}}\leq\delta_0$, for all possible choices of $j_1,...,j_{N_0}$ (recall \eqref{RadiusTargetNthStep}, and that $\sigma_{k,j_k} < \frac{1}{100}$ and $R_{k,j_k} < \frac{1}{2}$ for all $k$.)

By construction, the family  $\ds \left\{ \psi^{i_1}_{1,j_1}  \circ \dots \circ \psi^{i_{N_0}}_{N_0,j_{N_0}} \left( \, \overline{\D} \, \right) \right\}_{\substack{i_1,\dots,i_{N_0}\\j_1,\dots,j_{N_0} } }$ is a covering of $\phi(E)$ with the $\H^{d'}$-packing condition \cite{mattila} (i.e. satisfying Lemma \ref{LemmaPackingTarget}.) Thus, 
$$\aligned
\sum_j \left[\diam(U_j)\right]^{d'}&\geq C\,\sum_{\substack{i_1,\dots,i_{N_0}\\j_1,\dots,j_{N_0} } }
\left[ \psi^{i_1}_{1,j_1}  \circ \dots \circ \psi^{i_{N_0}}_{N_0,j_{N_0}} \left( \, \overline{\D} \, \right)  \right]^{d'} \geq C'= C \prod_{n=1}^{\infty} \left( 1- \varepsilon_n \right) >0
\endaligned$$

Hence, $\H^{d'}_{\delta}(\phi(E))\geq C'$ and letting $\delta\to 0$, we get that
$$0 < C' \leq\H^{d'}(\phi(E)) .$$ 

A similar argument, based this time on \eqref{GeneralEquationSourceSetAtScaleN}, (and hence substituting $d$ for $d'$ and $\sigma^K$ for $\sigma$, etc.) gives that $1 \lesssim \H^{d}(E)$.

The positive answer to Question \ref{question4.2inACMOU} is now readily obtained from the aforementioned result of Kaufman \cite{kaufman} and Kr\'{a}l \cite{kral}, that the condition $\H^1(E)=0$ is a precise characterization for removable singularities of  $BMO$ analytic functions. % and Theorem \ref{theorems1.2and4.3inACMOU}.

This finishes the proof of Theorem \ref{TheoremConstructionForUsualHausdorffMeasures}.
\end{proof}

From the above proof we want to remark that a key idea of it is to choose appropriately the parameters $\sigma$, as in \eqref{SimplificationOfTargetEquationToArea}, so that the expressions appearing in the calculation of the Hausdorff measure of the set in question (see \eqref{GeneralEquationTargetSetAtScaleN}), actually can be translated into area, which is indeed additive. This geometric idea is the proposed way of handling Cantor-type sets with building blocks at the same step of very different sizes: that the distribution of the building blocks be uniform with respect to area in the same way it was done in the above calculations. 

More precisely, the philosophy (for this heuristic comment let us work with squares, but the same idea works for circles as above) is that if one wants to build a Cantor set inside a square $\widetilde{Q}$ of sidelength $1$, and one wants to have (say) $9$ children $S_1, ... , S_9$ of equal size, then one can divide $\widetilde{Q}$ into $9$ equal squares $Q_1, ... , Q_9$ and place inside each $Q_i$ a square $S_i$ of sidelength $l(S_i)$, with the same center as $Q_i$, which satisfies that $l(S_i)^{d} = area (Q_i)$. After iteration, this construction yields a Cantor set of strictly positive and finite Hausdorff $\H^d$ measure, as in our proof. We thank K.Astala, A. Clop, J. Mateu and J. Orobitg for insightful conversations regarding this aspect just mentioned in the case of children of equal size when we were proving Theorem 5.1 (stated here as Theorem \ref{theorem5.1inACMOU}) in our joint paper \cite{astalaclopmateuorobitguriartepreprint}. Further pondering of those conversations led us later to a deeper understanding of the philosophy explained now.

However if we want (as in our case) the square $\widetilde{Q}$ to have (say) $9$ square children $S_1, ... , S_9$ of unequal size, one can then divide $\widetilde{Q}$ into $9$ unequal squares $Q_1, ... , Q_9$ and place inside each $Q_i$ a square $S_i$ of sidelength $l(S_i)$, with the same center as $Q_i$, which satisfies that $l(S_i)^{d} = area (Q_i)$. Again after iteration, this construction yields a Cantor set of strictly positive and finite Hausdorff $\H^d$ measure.

This geometric meaning of the choice of parameters is also suggested by an algebraic insight (see \eqref{SigmaAsFunctionOfR} and \eqref{EquationSigmaRInCaseTargetDimensionIs1}.)

From Theorem \ref{TheoremConstructionForUsualHausdorffMeasures} we also obtain the following
\begin{cor}\label{Theorem1.1inACMOUSharpInSenseOfRelaxingHypothesis}
Theorem \ref{theorem1.1inACMOU} is sharp in the sense that no relaxation of the hypothesis (in terms of Hausdorff gauge functions) allows the same conclusion to hold, i.e. there exists a compact set $E \subset \C$ and a $K$-quasiconformal mapping $\phi$ such that $\H^\frac{2}{K+1}(E)>0$ and $\H^{1}(\phi(E))>0$.
\end{cor}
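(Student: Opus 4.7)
The plan is to specialize Theorem \ref{TheoremConstructionForUsualHausdorffMeasures} to the exponent $t = \frac{2}{K+1}$ and read off the corresponding conclusion; the Corollary is essentially a book-keeping consequence of the construction just completed.

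First I would verify the elementary arithmetic. Substituting $t = \frac{2}{K+1}$ into the distortion formula $t' = \frac{2Kt}{2+(K-1)t}$, one computes
\begin{equation*}
2 + (K-1)\cdot\tfrac{2}{K+1} \;=\; \tfrac{2(K+1)+2(K-1)}{K+1} \;=\; \tfrac{4K}{K+1},
\end{equation*}
so that $t' = \frac{4K/(K+1)}{4K/(K+1)} = 1$. Hence Theorem \ref{TheoremConstructionForUsualHausdorffMeasures} directly supplies a compact set $E \subset \C$ and a $K$-quasiconformal mapping $\phi:\C\to\C$ with
\begin{equation*}
0 < \H^{\frac{2}{K+1}}(E) < \infty \qquad \text{and} \qquad 0 < \H^1(\phi(E)) < \infty,
\end{equation*}
from which the required strict inequalities $\H^{\frac{2}{K+1}}(E)>0$ and $\H^1(\phi(E))>0$ follow at once.

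The conceptual content, namely that this example records the ``sharp hypothesis'' sharpness of Theorem \ref{theorem1.1inACMOU} within the polynomial Hausdorff scale, is then immediate: if some hypothesis $\widetilde{A}$ strictly weaker than $\H^{\frac{2}{K+1}}(E)=0$ (expressed in terms of a Hausdorff gauge $h$ with $h(t)\lesssim t^{\frac{2}{K+1}}$) were still to force the conclusion $\H^1(\phi(E))=0$, it would in particular have to be violated by the $E$ just constructed, contradicting $\H^1(\phi(E))>0$. The fully general sharpness over arbitrary Hausdorff gauge functions, as announced in Theorem \ref{SharpnessOfTheorems1.1inACMOUand2.5inACMOUIn2DifferentSensesIntroduction}(a), will be the business of section \ref{ConstructionForGeneralizedHausdorffMeasures}.

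There is no real obstacle to overcome: the entire Cantor-type construction, the key algebraic identity \eqref{EquationSigmaR} with parameter choice \eqref{SigmaAsFunctionOfR}, and both the upper bound (Lemma \ref{UpperBoundsForHausdorffMeasureInSourceAndTarget}) and the lower bound (Lemma \ref{LemmaPackingTarget} with the covering/Lebesgue-number argument) have already been carried out in the proof of Theorem \ref{TheoremConstructionForUsualHausdorffMeasures}. The Corollary is therefore merely the matching of those parameters to the critical exponent pair $(t,t')=(\tfrac{2}{K+1},1)$ appearing in Theorem \ref{theorem1.1inACMOU}.
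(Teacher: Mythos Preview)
Your proposal is correct and matches the paper's approach exactly: the paper states this Corollary immediately after the proof of Theorem \ref{TheoremConstructionForUsualHausdorffMeasures} with no separate argument, treating it as the specialization $t=\tfrac{2}{K+1}$, $t'=1$. Your added arithmetic check and explanatory remarks are fine and do not depart from this.
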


and, analogously
\begin{cor}\label{AbsoluteContinuityOfHausdorffMeasuresSharpInSenseOfRelaxingHypothesis}
If the implication \eqref{abscont} is true, it is sharp in the same sense as Corollary \ref{Theorem1.1inACMOUSharpInSenseOfRelaxingHypothesis}.
\end{cor}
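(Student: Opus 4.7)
The plan is to deduce Corollary \ref{AbsoluteContinuityOfHausdorffMeasuresSharpInSenseOfRelaxingHypothesis} directly from Theorem \ref{TheoremConstructionForUsualHausdorffMeasures}, along exactly the same lines as Corollary \ref{Theorem1.1inACMOUSharpInSenseOfRelaxingHypothesis}, by verifying that the constructed examples simultaneously obstruct every admissible ``relaxation'' of the hypothesis of \eqref{abscont} at the level of Hausdorff gauge functions.

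First I would make precise what ``relaxation of the hypothesis in terms of Hausdorff gauge functions'' means. For an increasing gauge $h$ with $h(0)=0$, the hypothesis $\H^h(E)=0$ is strictly weaker than $\H^d(E)=0$ precisely when $h(t)/t^d \to 0$ as $t\to 0^+$: indeed the elementary pointwise bound
\begin{equation}\label{ProofPlanHGaugeVsTD}
\H^h_\delta(E)\;\le\;\Bigl(\sup_{0<t\le\delta}\frac{h(t)}{t^d}\Bigr)\,\H^d_\delta(E)
\end{equation}
shows that any set $E$ with $\H^d(E)<\infty$ automatically satisfies $\H^h(E)=0$, so such an $h$ really does weaken the hypothesis. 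These are therefore exactly the $\widetilde{A}$'s we must rule out.

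Next I would invoke Theorem \ref{TheoremConstructionForUsualHausdorffMeasures} for the chosen exponent $d\in(0,2)$: it produces a compact set $E\subset\C$ and a $K$-quasiconformal map $\phi:\C\to\C$ satisfying $0<\H^d(E)<\infty$ and $0<\H^{d'}(\phi(E))<\infty$, where $d'=\frac{2Kd}{2+(K-1)d}$. Combining $\H^d(E)<\infty$ with \eqref{ProofPlanHGaugeVsTD}, for any gauge $h$ strictly weaker than $t^d$ in the above sense, we obtain $\H^h(E)=0$, while the second estimate gives $\H^{d'}(\phi(E))>0$. Hence the relaxed implication $\H^h(E)=0 \Rightarrow \H^{d'}(\phi(E))=0$ fails for this $E$ and $\phi$, which is precisely the asserted sharpness of the hypothesis of \eqref{abscont}.

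The entire content of the argument is packed into Theorem \ref{TheoremConstructionForUsualHausdorffMeasures}; the auxiliary observation \eqref{ProofPlanHGaugeVsTD} is purely measure-theoretic and standard. Consequently the main (in fact the only) obstacle is the Cantor-type construction and the Carleson-type packing estimate of Lemma \ref{LemmaPackingTarget}, both already established in the preceding section, and I anticipate no further difficulty in assembling the corollary from them.
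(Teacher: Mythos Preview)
Your proposal is correct and follows exactly the paper's approach: the corollary is stated immediately after Corollary \ref{Theorem1.1inACMOUSharpInSenseOfRelaxingHypothesis} with the remark ``and, analogously,'' and both are obtained as direct consequences of Theorem \ref{TheoremConstructionForUsualHausdorffMeasures}. Your additional unpacking of ``relaxation in terms of gauge functions'' via the comparison \eqref{ProofPlanHGaugeVsTD} is standard and simply makes explicit what the paper leaves to the reader.
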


Also, we get 
\begin{cor}\label{Theorem2.5inACMOUSharpInSenseOfStrengtheningConclusion}
Theorem \ref{theorem2.5inACMOU} is sharp (in the sigma-finite measure goes to sigma-finite measure version) in the sense that under the same hypothesis the conclusion cannot be strengthened. I.e. there exists a compact set $F \subset \C$ and a $K$-quasiconformal mapping $\phi$ such that $\H^\frac{2}{K+1}(F) = \infty$ but $F$ is $\H^\frac{2}{K+1}$-$\sigma$-finite and $\H^{1}(\phi(F))=\infty$ but $\phi(F)$ is $\H^1$-$\sigma$-finite. In general, for any $0<d<2$ there exists a compact set $F \subset \C$ and a $K$-quasiconformal mapping $\phi$ such that $\H^d(F) = \infty$ but is $\H^d$-$\sigma$-finite, and $\H^{d'}(\phi(F))=\infty$ but is $\H^{d'}$-$\sigma$-finite. As usual, we are taking $d'=\frac{2Kd}{2+(K-1)d}$.
\end{cor}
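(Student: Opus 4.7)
The plan is to build $F$ as a countable disjoint union of rescaled copies of the compact set $E^\ast$ produced by Theorem \ref{TheoremConstructionForUsualHausdorffMeasures}, placed in pairwise disjoint sub-disks of $\overline{\D}$, and to build $\phi$ by gluing the corresponding rescaled $K$-quasiconformal mappings (each of which is the identity off its supporting disk). First I would fix one instance $(E^\ast,\phi^\ast)$ of Theorem \ref{TheoremConstructionForUsualHausdorffMeasures} with $\H^d(E^\ast),\H^{d'}(\phi^\ast(E^\ast))\in(0,\infty)$ and with $E^\ast\cup\phi^\ast(E^\ast)\subset\overline{\D}$, noting that by construction $\phi^\ast=\mathrm{id}$ off $\overline{\D}$. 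Then I pick disjoint disks $B_n=D(c_n,\lambda_n)\subset\overline{\D}$ with $c_n\to c_\infty$, set $E_n:=c_n+\lambda_n E^\ast$, and let $\phi_n(z):=c_n+\lambda_n\phi^\ast(\lambda_n^{-1}(z-c_n))$, which coincides with the identity outside $B_n$. Finally set $F:=\{c_\infty\}\cup\bigcup_n E_n$ and define $\phi:=\phi_n$ on $B_n$ and $\phi:=\mathrm{id}$ elsewhere.

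For the scale choice I would take $\lambda_n=n^{-1/d'}$, so that $\sum_n\lambda_n^{d'}=\sum_n n^{-1}=\infty$ and, since $d<d'$ when $K>1$ and $0<d<2$ (cf.\ Theorem \ref{SharpnessOfTheorems1.1inACMOUand2.5inACMOUIn2DifferentSensesIntroduction}(b)), $\sum_n\lambda_n^d=\infty$ as well, while $\sum_n\lambda_n^2<\infty$ because $d'<2$. This last summability makes it possible to pack the $B_n$ disjointly inside $\overline{\D}$ with centers accumulating only at $c_\infty$, by a direct covering argument in the spirit of Lemma \ref{FillingAreaOfDiskWithDisks}(b); in particular $F$ is compact.

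To verify that $\phi$ is $K$-quasiconformal, note it is a homeomorphism of $\C$ because each $\phi_n$ is a homeomorphism of $B_n$ equal to the identity near $\partial B_n$, and the distortion inequality \eqref{distortioninequality} holds a.e.\ (trivially off $\bigcup_n B_n$, and inside each $B_n$ from $\phi_n$). For the Sobolev regularity, the standard bound $\|D\phi_n\|_{L^2(B_n)}^2\le K\int_{B_n}|J\phi_n|\le K|B_n|\lesssim\lambda_n^2$ together with $\sum_n\lambda_n^2<\infty$ yields $\phi\in W^{1,2}_{loc}(\C)$. For the Hausdorff measure computations, disjointness of the $E_n$ and scaling give
\[
\H^d(F)=\sum_n\lambda_n^d\,\H^d(E^\ast)=\infty,\qquad \H^{d'}(\phi(F))=\sum_n\lambda_n^{d'}\,\H^{d'}(\phi^\ast(E^\ast))=\infty,
\]
while both $F$ and $\phi(F)$ are countable unions of sets of finite $\H^d$- respectively $\H^{d'}$-measure (together with the singleton $\{c_\infty\}$, of zero measure), hence $\sigma$-finite in the respective Hausdorff measures.

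The main obstacle is purely the packing/gluing bookkeeping in the first two paragraphs, i.e.\ arranging disjoint $B_n$ inside $\overline{\D}$ with centers converging to a single point while keeping the radii $\lambda_n=n^{-1/d'}$; all the nontrivial mass estimates are inherited from Theorem \ref{TheoremConstructionForUsualHausdorffMeasures} by the similarity scaling of Hausdorff measures. The case $d=\frac{2}{K+1}$, $d'=1$ is the particular instance stated explicitly in the corollary.
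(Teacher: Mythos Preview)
Your proof is correct and follows the same gluing idea as the paper: place countably many rescaled copies of the set $E^\ast$ from Theorem \ref{TheoremConstructionForUsualHausdorffMeasures} in disjoint regions, glue the corresponding $K$-quasiconformal maps (each the identity off its support), and choose the scales so that both $\sum\lambda_n^d$ and $\sum\lambda_n^{d'}$ diverge while $\sum\lambda_n^2$ converges. Your implementation is in fact cleaner than the paper's. The paper packs the copies inside a sequence of dyadic squares $Q_k$ along the $x$-axis, using the scale sequence $\lambda_n=\frac{1}{\sqrt{n}\log(n+2)}$ together with auxiliary parameters $\varepsilon_k$ and $N_k$ so that each $Q_k$ separately contributes a fixed positive amount to both Hausdorff measures; this double indexing and the dyadic placement argument are not really needed. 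Your single-sequence choice $\lambda_n=c\,n^{-1/d'}$ (a small constant $c$ is needed so that $\lambda_1<1$ and the disks fit inside $\overline{\D}$) directly gives $\sum\lambda_n^{d'}=\infty$, $\sum\lambda_n^d=\infty$ (since $d<d'$), and $\sum\lambda_n^2<\infty$ (since $d'<2$), and the packing reduces to finding disjoint disks of prescribed radii with summable squares inside $\overline{\D}$. Both arguments verify $K$-quasiconformality the same way, via finite truncations $\phi_N$ converging in $W^{1,p}_{loc}$; your direct $L^2$ gradient bound $\|D\phi_n\|_{L^2(B_n)}^2\le K|B_n|$ is equivalent.
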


\begin{proof}
The proof of this corollary is achieved with the usual technique of ``gluing" together countably many copies of the example constructed in Theorem \ref{TheoremConstructionForUsualHausdorffMeasures}. 

More precisely, let $Q_1$ be a square of sidelength $\frac{1}{2}$, with sides parallel to the coordinate axes, and the lower side being $ [0,\frac{1}{2}] \times \{ 0 \}$.  Then let $Q_2$ be a square of sidelength $\frac{1}{4}$, with the lower side being $ [\frac{1}{2}, \frac{3}{4}] \times \{ 0 \}$, and in general let $Q_n$ be a square of sidelength $\frac{1}{2^{-n}}$, with the lower side being $ [1-\frac{1}{2^{-n-1}}, 1-\frac{1}{2^{-n}}] \times \{ 0 \}$. We will place several rescaled copies of the compact set from Theorem \ref{TheoremConstructionForUsualHausdorffMeasures} (let us call it $E \subset \D$) inside each $Q_n$ so that each $Q_n$ contributes at least $c > 0$ towards both $\H^d(E)$ and $\H^{d'}(\phi (E))$, where $\phi$ is the $K$-quasiconformal mapping defined as the identity outside the copies of $E$ that we will prescribe, and the rescaled copy of the $K$-quasiconformal mapping from Theorem \ref{TheoremConstructionForUsualHausdorffMeasures} on each copy of $E$.

Let $\lambda_n = \frac{1}{\sqrt{n} \; \log (n+2)}$. Notice that $\displaystyle{ \sum_{n=1}^{\infty} (\lambda_n)^2 < \infty }$, but $\displaystyle{ \sum_{n=1}^{\infty} (\lambda_n)^\alpha = \infty }$ for all $\alpha<2$. Fix now $d$ and $d'$ as in the statement. Let us fix $k \in \mathbb{N}$. Notice that the area of each $Q_k$ satisfies $\mid Q_k \mid = \frac{1}{4^k}$. Let $\varepsilon_k >0$ be so small that $\displaystyle{ \sum_{n=1}^{\infty} (\varepsilon_k  \lambda_n)^2 < \frac{1}{1000} \frac{1}{4^k} }$, and let $N_k$ be large enough that $\displaystyle{ \sum_{n=1}^{N_k} (\varepsilon_k  \lambda_n)^{d'} > 1 }$.

Let $c_{k,n} \in \mathbb{N}$ be such that $\varepsilon_k  \lambda_n < 2^{-c_{k,n}}  \leq 2 \varepsilon_k  \lambda_n$ (some values of $c_{k,n}$ might be repeated for different values of $k$ and $n$.) Then $\displaystyle{ \sum_{n=1}^{N_k} ( 2^{-c_{k,n}}  )^2 \leq \frac{1}{250} \frac{1}{4^k} = \frac{1}{250} \mid Q_k \mid  }$. Inside each $Q_k$ place $N_k$ dyadic squares $\{ S_{k,n} \}_{n=1}^{N_k}$ with disjoint interiors, of sidelengths (respectively) $2^{-c_{k,n}}$. E.g. subdivide $Q_k$ into its 4 dyadic children, and each of these into its 4 dyadic children until $2^{-c_{k,1}}$ is reached. Then keep as many squares of sidelength $2^{-c_{k,1}}$ as needed (to account for repetitions in the values of $c_{k,n} $), and with the remaining squares, restart the process of subdivision. The process is finite and there is no overlapping, since all squares involved are dyadic and the sum of the areas of the squares $S_{k,n}$ is smaller than the area of $Q_k$.

With the same center as $S_{k,n}$, draw $\frac{\varepsilon_k  \lambda_n}{2} E \subset \frac{\varepsilon_k  \lambda_n}{2} \D$ i.e. $E$ and $\D$ rescaled by a factor of $\frac{\varepsilon_k  \lambda_n}{2}$, so that they fit inside $S_{k,n}$. Let $F$ be the union of the countably many rescaled copies of $E$ just described together with the point $(1,0)$ and $\phi$ the $K$-quasiconformal mapping previously described (it is easy to see that it is $K$-qc, e.g. by taking the mappings $\phi_N$ that agree with (a rescaled copy of) the mapping from Theorem \ref{TheoremConstructionForUsualHausdorffMeasures} on the first $N$ copies of $E$ and are otherwise the identity, and then observing that there exists a limit $K$-quasiconformal mapping  $\ds \phi=\lim_{N\to\infty}\phi_N$ with convergence in $W^{1,p}_{loc}(\C)$ for any $p<\frac{2K}{K-1}$.) 

The set $F$ is compact, and each square $Q_k$ contributes at least $c_0 >0$ towards $\H^{d'} (\phi(F))$. Since $0 < \lambda_n  < 1$, then $(\lambda_n)^{d'} < (\lambda_n)^{d}$, so (in the source plane) each square $Q_k$ contributes at least $c_1 >0$ towards $\H^d (F)$.

\end{proof}

%And we also get a corresponding parallel version of Corollary \ref{Theorem2.5inACMOUSharpInSenseOfStrengtheningConclusion} for other dimensions, if the corresponding analogue of Theorem \ref{theorem2.5inACMOU} were to be proven in other dimensions.

\begin{remark}\label{RemarkOnConsequenceForBesselCapacities}
Let us also mention that, aiming at proving implication \eqref{abscont}, in \cite{astalaclopmateuorobitguriartepreprint} a partial result is obtained for $1<t'<2$, namely that for a compact set $E \subset \C$ and a $K$-quasiconformal mapping $\phi: \C \to \C$, if $\H^{t}(E)=0$ (or even $\H^{t}(E)< \infty$), then $C_{\alpha, t'} (\phi (E)) =0$, where $\alpha = \frac{2}{t'}-1$, and $C_{\alpha,p}$ stands for the Bessel capacity. This implies then that $\H^{h} (\phi (E)) =0$ for any gauge function $h(s)=s^{t'}\,\varepsilon(s)$ such that 

\begin{equation}\label{ConditionForHausdorffGaugeFunctionsRelatedBesselCapacities}
\int_0 \varepsilon(s)^\frac{1}{t'-1}\frac{ds}{s}<\infty
\end{equation}

This might induce to wonder whether the ``correct" implication for $K$-quasiconformal mappings $\phi$ is not quite the implication \eqref{abscont}, but some sort of implication that a certain Bessel capacity is zero for $E$ implies that another Bessel capacity is zero for $\phi (E)$. In the limiting case $t'=1$, this would imply that any compact set $E$ with $0<\H^\frac{2}{K+1}(E)<\infty$ (and hence $C_{\alpha,p}(E)=0$, for some $\alpha,p$, with $p>1$), would satisfy $\H^{1}(\phi(E))=0$ (since $C_{1,1}(E) \approx \H^1_{\infty} (E)=0$, see \cite{adamshedberg}), which we just showed is false (Corollary \ref{Theorem1.1inACMOUSharpInSenseOfRelaxingHypothesis}.)
\end{remark}

\section{Examples of extremal distortion for generalized Hausdorff measures}\label{ConstructionForGeneralizedHausdorffMeasures}

Let us first agree on some terminology and notation. A {\it{measure function}} (or {\it{gauge function}}) is a continuous non-decreasing function $h(t)$, $t\geq 0$, such that $\ds \lim_{t\to 0}h(t)=0$ and $h(t)>0$ for $t>0$. If $h$ is a measure function and $F\subset\C$ the $h$-Hausdorff content of $F$ is 
$$\H^h_{\infty}(F)=\inf \sum_j h(2 r_j)$$
where the infimum is taken over all countable coverings of $F$ by disks $B(z_j, r_j)$ of diameter $d_j = 2 r_j$. If the infimum is taken over all countable coverings of $F$ by disks of radius $r_j$ with the additional restriction that $d_j < \delta$, then such infimum is denoted by $\H^h_{\delta}(F)$. Taking the limit as $\delta \to 0$, one gets the (generalized) $h$-Hausdorff measure of $F$, denoted by $\H^h (F)$. When $h(t)=t^\alpha$, one gets the (usual) $\alpha$-dimensional Hausdorff measure $\H^\alpha (F)$.

Recall that one can introduce a partial order into the family of gauge functions (see e.g. \cite{rogershausdorffmeasures}), denoted 
\begin{equation}\label{PartialOrderForGaugeFunctions1}
g \,\, \prec \,\, h
\end{equation}
whenever 
\begin{equation}\label{PartialOrderForGaugeFunctions2}
\frac{h(t)}{g(t)} \to 0 \,\, as \,\, t \to 0
\end{equation}
and it is said that $g$ corresponds to a smaller generalized dimension than $h$.

Recall also a standard comparison theorem, namely that if $f \prec g \prec h$ and the set $E$ has $\H^{g}(E)$ strictly positive and $\sigma$-finite, then $\H^{h}(E)=0$ and $\H^{f}(E)$ is non-$\sigma$-finite.

Two gauge functions $g$ and $h$ satisfying 
$$
0 < \liminf_{t \to 0} \frac{h(t)}{g(t)} \leq \limsup_{t \to 0} \frac{h(t)}{g(t)} < \infty
$$
will be regarded as (essentially) equivalent, although they would not lead to the same Hausdorff measures.

Then, for some of the generalized Hausdorff measures ``near" the usual Hausdorff measures we have the following Theorem that will allow us to prove the sharpness of Theorems \ref{theorem1.1inACMOU} and \ref{theorem2.5inACMOU} in a sense different to the one we have already proved.

\begin{thm}\label{SharpExampleTheoremForGeneralizedHausdorffMeasures}
Let $h^{(S)} (t) = t^d \varepsilon(t)$ be a gauge function where $0<d<2$, and one of the following two conditions is satisfied (for $t < t_0$):
\begin{enumerate}
\item[(a)] $\varepsilon(t)$ is a (strictly) decreasing function, $\varepsilon(t) \to \infty$ as $t \to 0$, but for all $\alpha >0$, we have that $t^\alpha \varepsilon(t) \to 0$ as $t \to 0$. %and $t^\alpha \varepsilon(t)$ is a (strictly) increasing function of $t$.
\item[(b)] $\varepsilon(t)$ is a (strictly) increasing function, $\varepsilon(t) \to 0$ as $t \to 0$, but for all $\alpha >0$, we have that $\frac{t^\alpha}{\varepsilon(t)} \to 0$ as $t \to 0$. In this case it follows that for all $\alpha >0$, $t^\alpha \varepsilon(t)$ is a (strictly) increasing function of $t$. Let us also assume that $\frac{\varepsilon^{\frac{1}{2-d}}(t)}{t}$ is a decreasing function of $t$ (which happens e.g. if $\varepsilon^{\frac{1}{2-d}}(t)$ is concave), and the logarithmic-type condition that $\varepsilon (t) \lesssim \varepsilon (t^K)$. 
\end{enumerate}

Then, if $d'=\frac{2Kd}{2+(K-1)d}$ as in equation \eqref{abscont}, there is a compact set $E \subset \C$ and a $K$-quasiconformal mapping $\phi :\C \to \C$ such that $\H^{ h^{(S)} }(E) \approx 1$ and $\H^{\widetilde {h^{(T)} } } (\phi (E)) >0$, where, correspondingly we can take
\begin{enumerate}
\item[(a)] $\widetilde{h^{(T)} } (t)= t^{d'} \varepsilon^{\frac{2}{2+(K-1)d}} (t^K)$
\item[(b)] $\widetilde{h^{(T)} } (t)= t^{d'} \varepsilon^{\frac{2}{2+(K-1)d}} (t)$
\end{enumerate}

\end{thm}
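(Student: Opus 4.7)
The plan is to follow the Cantor-type construction of Section \ref{BasicConstruction}, but to replace the explicit parameter choice \eqref{SigmaAsFunctionOfR} by the implicit one
\[
\sigma_{k,j_k}^{dK}\,\varepsilon\bigl(\sigma_{k,j_k}^K R_{k,j_k}\bigr)\;=\;R_{k,j_k}^{2-d},
\]
which is equivalent to $h^{(S)}(\sigma_{k,j_k}^K R_{k,j_k}) = R_{k,j_k}^2$, i.e.\ the natural generalization of \eqref{SimplificationOfTargetEquationToArea}: the single-step source gauge equals the single-step area. Unique positive solvability for $\sigma_{k,j_k}$ as a function of $R_{k,j_k}$ is ensured by strict monotonicity of $u\mapsto u^d\varepsilon(u)$, which holds in case (b) thanks to the hypothesis that $\varepsilon^{1/(2-d)}(t)/t$ be decreasing, and in case (a) because $\varepsilon$ is sub-power.

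The same manipulation that led to \eqref{SimplificationOfTargetEquationToArea} (now carrying the $\varepsilon$ factor along, and using $\tfrac{2}{2+(K-1)d}=\tfrac{d'}{dK}$) yields the dual target-side identity
\[
(\sigma_{k,j_k}R_{k,j_k})^{d'}\,\varepsilon\bigl(\sigma_{k,j_k}^K R_{k,j_k}\bigr)^{d'/(dK)}\;=\;R_{k,j_k}^{2},
\]
so that at each step the source gauge sum, the target gauge sum (with $\varepsilon$ evaluated at the source scale $s=\sigma^K R$), and the area sum $c_k=1-\varepsilon_k$ all coincide, in direct analogy with \eqref{AreaSourceAndTargetConditionsBecomeTheSameAtScaleN}.

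The next step is to iterate across $N$ steps, which is where the argument diverges from the polynomial case, since $\varepsilon$ does not distribute through products. The single-step identity can nevertheless be telescoped, using the slow variation of $\varepsilon$ and controlling the path-dependence of $s_J$ coming from the within-step variation of the $R_{k,j_k}$, so as to obtain $s_J^d\,\varepsilon(s_J)\approx \prod_k R_{k,j_k}^{2}$ and hence $\sum_J\prod m_{k,j_k}\,h^{(S)}(s_J)\approx\prod_n(1-\varepsilon_n)$; choosing the $\varepsilon_n$ summable then gives $\H^{h^{(S)}}(E)\lesssim 1$ from the natural covering. The matching lower bound $\H^{h^{(S)}}(E)\gtrsim 1$ comes from the packing inequality of Lemma \ref{LemmaPackingTarget} rewritten with $h^{(S)}$ in place of $t^d$: the geometric reading of \eqref{ProofPackingTarget}--\eqref{ProofPackingTargetPart2} as an area inequality survives verbatim thanks to the identity $h^{(S)}(s_{k,j_k})=R_{k,j_k}^2$. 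The analogous packing argument on the target side yields $\H^{\widetilde{h^{(T)}}}(\phi(E))>0$ once one compares the gauge $\varepsilon(s_J)^{d'/(dK)}$ that the construction naturally produces with the prescribed $\widetilde{\varepsilon}$: in case (b) one uses the approximate relation $s_J\approx t_J^{d'/d}$, together with $d'/d\leq K$, monotonicity of $\varepsilon$, and the logarithmic hypothesis $\varepsilon(t)\lesssim\varepsilon(t^K)$, to obtain $\varepsilon(s_J)\approx\varepsilon(t_J)$, matching $\widetilde{\varepsilon}(t)=\varepsilon(t)^{d'/(dK)}$; in case (a), $\varepsilon$ is decreasing and $s_J\geq t_J^K$ already at single-step scale (since $s/t^K=R^{1-K}\geq 1$), so $\varepsilon(s_J)\leq\varepsilon(t_J^K)$ and the enlarged gauge $\widetilde{\varepsilon}(t)=\varepsilon(t^K)^{d'/(dK)}$ suffices (possibly with $\H^{\widetilde{h^{(T)}}}(\phi(E))=+\infty$, which the statement allows).

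The main obstacle, as signaled by the authors' comment after Lemma \ref{LemmaPackingTarget}, is that once $\varepsilon\not\equiv 1$ the clean algebraic structure is lost and the geometric area-based interpretation of \eqref{SimplificationOfTargetEquationToArea} is essentially the only guiding principle left. Propagating the single-step balance to a cumulative balance, controlling the path-dependence of $s_J$, generalizing the packing lemma to the new gauge, and matching the $\varepsilon$-arguments on source and target according to whether hypothesis (a) or (b) applies all require substantially more technical bookkeeping than in the polynomial case; in particular the logarithmic-type condition in (b) (and its replacement in (a) by evaluating $\varepsilon$ at $t^K$ rather than $t$ in $\widetilde{h^{(T)}}$) is precisely what makes the source-to-target matching go through.
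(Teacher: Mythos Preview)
Your parameter choice is the single-step identity
\[
h^{(S)}\bigl(\sigma_{k,j_k}^K R_{k,j_k}\bigr)=R_{k,j_k}^2,
\]
whereas the paper's choice \eqref{SigmaAsFunctionOfRGeneralizedHausdorffMeasure} is the \emph{cumulative} identity
\[
h^{(S)}(s_{j_1,\dots,j_N})=\bigl(R_{1,j_1}\cdots R_{N,j_N}\bigr)^2,
\]
which defines $\sigma_{N,j_N}$ inductively in terms of the whole path $(j_1,\dots,j_N)$, not just $R_{N,j_N}$. This distinction is the crux of the argument, and your telescoping claim does not bridge it. Writing $u_k=\sigma_{k,j_k}^K R_{k,j_k}$, your choice gives $u_k^d\,\varepsilon(u_k)=R_{k,j_k}^2$, hence
\[
\frac{h^{(S)}(s_J)}{\prod_k R_{k,j_k}^2}
=\frac{\varepsilon\bigl(\prod_k u_k\bigr)}{\prod_k \varepsilon(u_k)}.
\]
For the prototype $\varepsilon(t)=\log(1/t)$ of case (a) this ratio is $\bigl(\sum_k\log(1/u_k)\bigr)\big/\prod_k\log(1/u_k)$, which tends to $0$ as $N\to\infty$; no amount of ``slow variation'' or fast shrinking of the $R_{k,j_k}$ repairs this. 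Consequently $\sum_J\prod m_{k,j_k}\,h^{(S)}(s_J)\to 0$, so the construction actually yields $\H^{h^{(S)}}(E)=0$, not $\H^{h^{(S)}}(E)\approx 1$.

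The paper avoids this entirely: by imposing the cumulative identity \eqref{SigmaAsFunctionOfRGeneralizedHausdorffMeasure} exactly at every level, one has $h^{(S)}(s_J)=\prod_k R_{k,j_k}^2$ on the nose, and both the covering estimate and the packing inequality \eqref{PackingSourceGeneralizedHausdorffMeasures} reduce to genuine area computations via the dilation $\widetilde{g^{(H-1)}_{i_0}}\circ\bigl(g^{(H-1)}_{i_0}\bigr)^{-1}$ (see \eqref{ProofPackingSourceGaneralizedHausdorffMeasures}). The packing bound then becomes the pointwise inequality \eqref{FinalConditionForSourceInTermsOfTheta}, namely $\theta\,\varepsilon^{1/(2-d)}(s)\lesssim\varepsilon^{1/(2-d)}(\theta s)$ for $0<\theta\le 1$, which is where the monotonicity hypothesis in (a) and the decreasing-quotient hypothesis in (b) are actually used. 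Your sketch never isolates this inequality, and your assertion that the packing lemma ``survives verbatim thanks to the identity $h^{(S)}(s_{k,j_k})=R_{k,j_k}^2$'' invokes the single-step identity where the cumulative one is needed. The comparison of $h^{(T)}$ with $\widetilde{h^{(T)}}$ via $t_J^K\le s_J\le t_J$ that you outline at the end is correct in spirit and matches \eqref{PowerComparisonOfSNAndTN}--\eqref{ComparisonForH(T)AndWidetildeH(T)}, but it sits on top of the cumulative framework, not the single-step one.
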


Let us comment briefly on the notation. The superscripts $(S)$ or $(T)$ for the gauge function stand correspondingly, for the source or the target set. The reason we use a tilde on top of the target gauge function is that the actual gauge function that we get for the target $h^{(T)}$ is more complicated than $\widetilde{h^{(T)} }$, but is frequently equivalent to $\widetilde{h^{(T)} }$ and, as we will see, is always related to $\widetilde{h^{(T)} }$ via an appropriate inequality. 

The case $\varepsilon (t) =1$ has already been dealt with in Theorem \ref{TheoremConstructionForUsualHausdorffMeasures}. A typical example for case (a) in Theorem \ref{SharpExampleTheoremForGeneralizedHausdorffMeasures} is $\varepsilon (t) = \log^{\beta} \left( \frac{1}{t} \right)$, with $\beta >0$. And for case (b), a typical example is $\varepsilon (t) = \frac{1}{\log^{\beta} \left( \frac{1}{t} \right)}$, with $\beta >0$. The requirement that the gauge functions be strictly monotone (as opposed to just monotone) causes no loss of generality for the purpose of proving that the Hausdorff measure of a set is zero, positive or finite, as is well-known, so we will assume henceforth that all gauge functions are strictly monotone.

\begin{proof} (Of Theorem \ref{SharpExampleTheoremForGeneralizedHausdorffMeasures}.)
The proof of Theorem \ref{SharpExampleTheoremForGeneralizedHausdorffMeasures} is similar to that of Theorem \ref{TheoremConstructionForUsualHausdorffMeasures}, but more technical, so we will just indicate the changes needed, leaving the details for the reader.

First of all, notice that in case $(a)$, the hypothesis that $h^{(S)} (t) = t^d \varepsilon(t)$ be a gauge function implies that it is positive and non-decreasing (for $t<t_0$.) Since $t \to t^\alpha$ for $\alpha >0$ is positive and increasing for $t \geq 0$, then $\widetilde{h^{(T)} } = t^{d'} \varepsilon^{\frac{2}{2+(K-1)d}} (t^K)$ is also non-decreasing (for $t<t_0$.) In case $(b)$, it is obvious that $\widetilde{h^{(T)} }$ is non-decreasing.

We repeat the basic construction as in section \ref{BasicConstruction}. The equation corresponding to \eqref{GeneralEquationSourceSetAtScaleN} now reads
\begin{equation}\label{GeneralEquationSourceSetAtScaleNGeneralizedHausdorffMeasure}
\sum_{j_1,...,j_N}  m_{1,j_1} m_{2,j_2} \ldots m_{N,j_N} h^{(S)}( s_{j_1,...,j_N}) \approx 1
\end{equation}

and in parallel to equation \eqref{GeneralEquationTargetSetAtScaleN} we have
\begin{equation}\label{GeneralEquationTargetSetAtScaleNGeneralizedHausdorffMeasure}
\sum_{j_1,...,j_N}  m_{1,j_1} m_{2,j_2} \ldots m_{N,j_N} h^{(T)}( t_{j_1,...,j_N}) \approx 1
\end{equation}

for a certain gauge function $h^{(T)}( t)$ to be determined later.

In analogy to equation \eqref{SigmaAsFunctionOfR}, we inductively define the parameters $\sigma_{k,j_k}$ by
\begin{equation}\label{SigmaAsFunctionOfRGeneralizedHausdorffMeasure}
\left[ (\sigma_{1,j_1})^K \, R_{1,j_1} \dots  (\sigma_{N,j_N})^K \, R_{N,j_N} \right]^d \, \varepsilon \left\{ (\sigma_{1,j_1})^K \, R_{1,j_1} \dots  (\sigma_{N,j_N})^K \, R_{N,j_N}   \right\} = \left(  R_{1,j_1} \dots R_{N,j_N} \right)^2
\end{equation}

One technicality appearing now is that, in general, the $\varepsilon (t)$ terms are not going to be multiplicative as we run from one step to the next, whereas in the case $\varepsilon (t) =1$, they are multiplicative. The inductive definition \eqref{SigmaAsFunctionOfRGeneralizedHausdorffMeasure} can be made, since $h^{(S)}(t) = t^d \varepsilon (t)$, being a measure function, is strictly increasing for $t<t_0$, hence it is injective. We will later be more specific regarding further restrictions on the choice of the parameters, but for the time being let us notice that if need be, we can choose the radii $R_{k,j_k}$ inductively so that $R_{l,i} < R_{m,j}$ if $l>m$, for all $i,j$. The parameters $\sigma_{k,j_k}$ can be taken as small as we wish by taking the $R_{k,j_k}$ sufficiently small. One (coarse) way to verify this in case (b) of Theorem \ref{SharpExampleTheoremForGeneralizedHausdorffMeasures}, is to notice that for $t<t_0$, and for $0< \delta < 2-d$, one has $t^d > h^{(S)}(t) = t^d \varepsilon (t) > t^{d+\delta}$. Consequently, a comparison of the solutions of the equation $h^{(S)}(t) = (R_{1,j_1})^2$ with the corresponding equations where the $h^{(S)}(t)$ is replaced by $t^d$ and $t^{d+\delta}$, and an inductive argument using \eqref{SigmaAsFunctionOfRGeneralizedHausdorffMeasure}, gives
\begin{eqnarray}\label{EstimateForSigmaNFirstEquation}
 \left[ (\sigma_{1,j_1})^K \, R_{1,j_1} \right. & \dots & \left.  (\sigma_{N,j_N})^K \, R_{N,j_N} \right]^d > \nonumber \\
&>& \left[ (\sigma_{1,j_1})^K \, R_{1,j_1} \dots  (\sigma_{N,j_N})^K \, R_{N,j_N} \right]^d \, \varepsilon \left\{ (\sigma_{1,j_1})^K \, R_{1,j_1} \dots  (\sigma_{N,j_N})^K \, R_{N,j_N}   \right\} > \nonumber \\
&>& \left[ (\sigma_{1,j_1})^K \, R_{1,j_1} \dots  (\sigma_{N,j_N})^K \, R_{N,j_N} \right]^{d+\delta}
\end{eqnarray}
and
\begin{equation}\label{EstimateForSigmaNSecondEquation}
\sigma_{N,j_N} < \frac{ \left( R_{1,j_1} \dots  R_{N,j_N}  \right)^{ \frac{2-d-\delta}{K(d+\delta)} }  }{ \sigma_{1,j_1} \dots \sigma_{N-1,j_{N-1}} } 
< \left(  R_{N,j_N}  \right)^{ \frac{2-d-\delta}{K(d+\delta)} } \left( R_{1,j_1} \dots R_{N-1,j_{N-1}}  \right)^{ \frac{2-d-\delta}{K(d+\delta)} - \frac{2-d}{Kd} }
\end{equation}

which shows that, once the $R_{k,j_k}$ have been chosen for $1 \leq k \leq N-1$, then $\sigma_{N,j_N}$ can be made as small as we want by choosing $R_{N,j_N}$ sufficiently small.

A parallel reasoning for the case (a) of Theorem \ref{SharpExampleTheoremForGeneralizedHausdorffMeasures} gives
\begin{equation}\label{EstimateForSigmaNThirdEquation}
\sigma_{N,j_N} < \frac{ \left( R_{1,j_1} \dots  R_{N,j_N}  \right)^{
\frac{2-d}{Kd}} }{ \sigma_{1,j_1} \dots \sigma_{N-1,j_{N-1}} } 
< \left(  R_{N,j_N}  \right)^{ \frac{2-d}{Kd}    } \left( R_{1,j_1} \dots R_{N-1,j_{N-1}}  \right)^{ \frac{2-d}{Kd} - \frac{2-d+\delta}{K(d-\delta)}  }
\end{equation}

so that here we would choose $0< \delta <d$ to reach the same conclusion (that the parameters $\sigma_{k,j_k}$ can be taken as small as we wish by taking the $R_{k,j_k}$ sufficiently small.)

%(If we assume that $h^{(S)} (t)$ is strictly increasing in a neighbourhood of $t=0$, which for the purposes of measure functions it can be assumed, then the previous argument that the parameters $\sigma_{k,j_k}$ can be taken as small as we wish follows immediately by noting that $\left(h^{(S)} \right)^{-1} (t) \to 0 $ as $t \to 0$.): FALSE. From the knowledge that $R_{N,j_N}$ is very small and \eqref{SigmaAsFunctionOfRGeneralizedHausdorffMeasure}, it would follow that $(\sigma_{N,j_N})^K \; R_{N,j_N}$ is very small, but without any quantitative control on how small it is, one cannot deduce that $\sigma_{N,j_N}$ is getting small as $R_{N,j_N}$ gets small. It could be that $R_{N,j_N}$ gets small and $\sigma_{N,j_N}$ stays comparable to $1$, say.

Going back to the main thread of the argument, now the same reasoning as in Theorem \ref{TheoremConstructionForUsualHausdorffMeasures}
where the role of \eqref{SimplificationOfTargetEquationToArea} is played by \eqref{SigmaAsFunctionOfRGeneralizedHausdorffMeasure}
yields as in \eqref{UpperHausdorffMeasureEstimateForSource} that $\H^{h^{(S)}} (E) \lesssim 1$.

Regarding the target set $\phi(E)$, in parallel to \eqref{SimplificationOfTargetEquationToArea}, the definition we made of the parameters $\sigma_{k,j_k}$ in \eqref{SigmaAsFunctionOfRGeneralizedHausdorffMeasure} implies (recall $d' = \frac{2Kd}{2+(K-1)d} $):
\begin{equation}\label{SigmaAsFunctionOfRGeneralizedHausdorffMeasureForTargetSet}
\left[ \sigma_{1,j_1} \, R_{1,j_1} \dots  \sigma_{N,j_N} \, R_{N,j_N} \right]^{d'} \, 
\varepsilon^{ \frac{2}{2+(K-1)d} } \left\{ (\sigma_{1,j_1})^K \, R_{1,j_1} \dots  (\sigma_{N,j_N})^K \, R_{N,j_N}   \right\} 
= \left(  R_{1,j_1} \dots R_{N,j_N} \right)^2
\end{equation}

i.e. we have (see \eqref{RadiusSourceNthStep} and \eqref{RadiusTargetNthStep})
\begin{equation}\label{DefinitionOfH(T)}
h^{(T)} (t_{j_1,...,j_N}) = (t_{j_1,...,j_N})^{d'}   \varepsilon^{ \frac{2}{2+(K-1)d} } \left( s_{j_1,...,j_N} \right) = \left(  R_{1,j_1} \dots R_{N,j_N} \right)^2
\end{equation}

which can be used as the definition of the gauge function $h^{(T)} (t)$ in a countable number of points $t_{j_1,...,j_N} \to 0$, as $N \to \infty$, with the intention of later extending it in a continuous strictly increasing fashion, provided some technical nuances are taken care of.

If this extension can be done, we readily have $\H^{h^{(T)}} (\phi(E)) \lesssim 1$, in parallel to Lemma \ref{UpperBoundsForHausdorffMeasureInSourceAndTarget}.

The first point to be taken care of is that it is not a priori clear that the correspondence $t_{j_1,...,j_N} \to s_{j_1,...,j_N}$ is actually a function, i.e. a priori a certain choice of parameters $\sigma_{k,j_k}$ and $R_{k,j_k}$ could be made such that $t_{j_1,...,j_N} = t_{i_1,...,i_M}$ but $s_{j_1,...,j_N} \neq s_{i_1,...,i_M}$, which would create a problem. One way to avoid this problem is to first choose $\widetilde{\varepsilon_1}$ as a candidate for $\varepsilon_1$, then choose $R_{1,1} < \min\{ \frac{1}{100}, t_0 \}$ and so small that $\sigma_{1,1} < \min\{ \frac{1}{100}, t_0 \}$. Now choose $R_{1,2}$ so small that both $R_{1,2}$ and $\sigma_{1,2}$ are smaller than the previously chosen parameters $R_{1,1}$ and $\sigma_{1,1}$. Proceed in this way, each time making sure that the parameters $R_{1,j}$ and $\sigma_{1,j}$ are smaller than all the previously chosen parameters $R_{1,k}$ and $\sigma_{1,k}$. Notice that the proof of Lemma \ref{FillingAreaOfDiskWithDisks} part (b) works no matter how small the radii $R_{1,k}$ are taken. Once the proportion of area of $\D$ occupied in this way 
by the disks of radius $R_{1,k}$, for $1 \leq k \leq l_1$ is larger than $(1-\widetilde{\varepsilon_1})$, stop and call that proportion of area $(1-\varepsilon_1)$. This way $0< \varepsilon_1 < \widetilde{\varepsilon_1}$.

Next choose $\widetilde{\varepsilon_2}$ as a candidate for $\varepsilon_2$. Choose now $R_{2,1}$ so small that both $R_{2,1}$ and $\sigma_{2,1}$ are smaller than all previously chosen parameters $R_{1,k}$ and $\sigma_{1,k}$. Now choose $R_{2,2}$ so small that both $R_{2,2}$ and $\sigma_{2,2}$ are smaller than the previously chosen parameters $R_{1,k}$ and $\sigma_{1,k}$ and $R_{2,1}$ and $\sigma_{2,1}$. We also require from $R_{2,2}$ and $\sigma_{2,2}$ that $R_{2,2}$ be chosen so small that
$$
(\sigma_{2,2} \; R_{2,2}) (\sigma_{1,k} \; R_{1,k} ) < \min_{l} \{ (\sigma_{2,1} \; R_{2,1}) (\sigma_{1,l} \; R_{1,l} )   \}, \text{  for all }k.
$$
Proceed in this way for the choice of parameters. I.e., choose inductively $R_{N,j_N}$ so small that both $R_{N,j_N}$ and $\sigma_{N,j_N}$ are smaller than all previously chosen parameters $R_{m,k}$ and $\sigma_{m,k}$, and also that all possible products of the form $t_{j_1,...,j_N}=\left( \sigma_{1,j_1} \, R_{1,j_1} \right) \dots \left( \sigma_{N,j_N} \, R_{N,j_N} \right)$ are smaller than all possible products of the same form that are formed with all possible combinations of parameters $R_{m,k}$ and $\sigma_{m,k}$ previously chosen. Also, since $0< \varepsilon_n < \widetilde{\varepsilon_n}$, if $\ds \prod_{n=1}^{\infty} \left( 1- \widetilde{\varepsilon_n} \right) \approx 1$, then 
$\ds \prod_{n=1}^{\infty} \left( 1- \varepsilon_n \right) \approx 1$. 

This way of choosing the parameters ensures that for any given value of $t_{j_1,...,j_N}$, there is a unique choice of $N$, $\sigma_{k,j_k}$ and $R_{k,j_k}$ such that \eqref{RadiusTargetNthStep} holds. And hence, that unique choice of $N$, $\sigma_{k,j_k}$ and $R_{k,j_k}$ yields a unique $s_{j_1,...,j_N}$ satisfying \eqref{RadiusSourceNthStep}.

Another point to take care of is that $h^{(T)} (t)$ be a strictly increasing function on the points $t_{j_1,...,j_N}$ which then allows for a continuous, strictly increasing extension to $[0, +\infty)$. In case (b) of Theorem \ref{SharpExampleTheoremForGeneralizedHausdorffMeasures}, $\varepsilon (t)$ is positive and increasing, and since the choice of parameters just described when explaining how to ensure that the correspondence $t_{j_1,...,j_N} \to s_{j_1,...,j_N}$ is actually a function can be made in the way we described so that $t_{j_1,...,j_N} \to s_{j_1,...,j_N}$ is a strictly increasing function on the set $T := \{t_{j_1,...,j_N} \}_{\substack{j_1,...,j_N \\N=1,2, \dots } }$, then $h^{(T)} (t)$ (see \eqref{DefinitionOfH(T)}) is also a strictly increasing function on $T$. In case (a) of Theorem \ref{SharpExampleTheoremForGeneralizedHausdorffMeasures}, notice that, given that all $\sigma_{k,j_k}$ and $R_{k,j_k}$ are $<1$, and $K>1$, comparing \eqref{RadiusSourceNthStep} and \eqref{RadiusTargetNthStep}, we see that for all possible choices of $N$, and $j_1,...,j_N$,
\begin{equation}\label{PowerComparisonOfSNAndTN}
\left(  t_{j_1,...,j_N}  \right)^K < s_{j_1,...,j_N} < t_{j_1,...,j_N}
\end{equation}

Note that this comparison \eqref{PowerComparisonOfSNAndTN} is what allows to conclude the expression for $\widetilde{ h^{(T)}} (t)$. Thinking of the logarithmic examples and \eqref{PowerComparisonOfSNAndTN}, the substitution of $h^{(T)} (t)$ by $\widetilde{ h^{(T)}} (t)$ is actually sharper than it might seem at first sight. (This substitution was made for convenience so that the statement of Theorem \ref{SharpExampleTheoremForGeneralizedHausdorffMeasures} looked more self-contained and to avoid explaining in the statement of the Theorem that the correspondence $ t_{j_1,...,j_N} \to  s_{j_1,...,j_N}$ is actually a function with the appropriate choice of parameters.) So, as a consequence of \eqref{PowerComparisonOfSNAndTN}, we get 
\begin{eqnarray}\label{ComparisonForH(T)AndWidetildeH(T)}
h^{(T)} (t_{j_1,...,j_N}) &=& (t_{j_1,...,j_N})^{d'}   \varepsilon^{ \frac{2}{2+(K-1)d} } \left( s_{j_1,...,j_N} \right) \ < \nonumber \\
&<& (t_{j_1,...,j_N})^{d'}   \varepsilon^{ \frac{2}{2+(K-1)d} } \left(  \left( t_{j_1,...,j_N} \right)^K   \right) = \widetilde{ h^{(T)}} (t_{j_1,...,j_N})
\end{eqnarray}

Then, to insure that $h^{(T)} (t)$ is a strictly increasing function on $S$, we can further restrict the choice of parameters previously described, and insist that $R_{1,2}$ be so small that, recalling that $t_2 = \sigma_{1,2} R_{1,2}$, then  $\widetilde{ h^{(T)}} (t_2) < h^{(T)} (t_1)$, and so on. So that in general, inductively, the right-hand side of \eqref{ComparisonForH(T)AndWidetildeH(T)} for the parameters we are choosing at a given time, is forced to be smaller than the left-hand side of \eqref{ComparisonForH(T)AndWidetildeH(T)} for all the parameters previously chosen. Alternatively, one can inductively choose $R_{k,j_k}$ to insure that $h^{(T)}$ is strictly increasing simply based on \eqref{DefinitionOfH(T)}.

As a consequence, as we mentioned above, we have that $\H^{h^{(T)}} (\phi(E)) \lesssim 1$, in parallel to Lemma \ref{UpperBoundsForHausdorffMeasureInSourceAndTarget}.

The analogous equations to \eqref{SummingInAllChildrenOfD} and \eqref{SummingInAllNGenerationDescendantsOfASubfamilyOfD} are easily seen to hold with analogous proof for $h^{(S)} (t)$ instead of $h^{d'}(t)=t^{d'}$, considering \eqref{SigmaAsFunctionOfRGeneralizedHausdorffMeasure} instead of \eqref{SimplificationOfTargetEquationToArea}, to go from an expression with $h^{(S)} (t)$ to an expression with the radii of the balls involved and back to an expression with $h^{(S)} (t)$. 

In parallel to Lemma \ref{LemmaPackingTarget}, and under the same hypotheses as Lemma \ref{LemmaPackingTarget}, the packing condition 
\begin{equation}\label{PackingSourceGeneralizedHausdorffMeasures}
\sum_{ B_n \in \mathcal{C}  } h^{(S)} \left(r(B_n) \right) \leq C_1 \, h^{(S)} \left( r(B) \right)
\end{equation}\\
for some absolute constant $C_1$ is proved in a similar way that we will now sketch.

With the same reasoning as and similar notation to Lemma \ref{LemmaPackingTarget}, we can assume that $\mathcal{C}$ is a finite family and that all the $B_n \in \mathcal{C}$ are descendants of some siblings $B^H_{k_0}, B^H_{k_1}, \dots , B^H_{k_m}$, with $m \geq 1$ (i.e. at least there are two siblings), which have a common father $B^{H-1}_{i_0}$. We assume that $\{ B^H_{k_p} \}_{p=0}^{m}$ is a complete list of ancestors of generation $H$ of the family $\{ B_n \}$. We can also similarly assume without loss of generality that, in parallel to \eqref{wlogRBLessThanRBHMinus1}, 
\begin{equation}\label{wlogRBLessThanRBHMinus1GeneralizedHausdorffMeasures}
r(B) \leq r(B^{H-1}_{i_0}) = s_{j_1,...,j_{H-1}}= \left( (\sigma_{1,j_1})^K \, R_{1,j_1} \right) \dots \left( (\sigma_{H-1,j_{H-1}})^K \, R_{H-1,j_{H-1}} \right).
\end{equation}

where, in analogy to \eqref{RadiusBHkpAsFunctionOfP} we denote
\begin{equation}\label{RadiusBHkpAsFunctionOfPGeneralizedHausdorffMeasures}
r(B^H_{k_p})= s_{j_1,...,j_{H-1},j_{H_{k_p}}}=\left( (\sigma_{1,j_1})^K \, R_{1,j_1} \right) \dots \left( (\sigma_{H-1,j_{H-1}})^K \, R_{H-1,j_{H-1}} \right) \left( (\sigma_{H,j_{H_{k_p}}})^K \, R_{H,j_{H_{k_p}}} \right).
\end{equation}

and as in \eqref{DilatedBCoversBHkp}, we have that for $p=0,1, \dots , m $, with the corresponding definition for $\widetilde{B^H_{k_p}}$ (using the parameters $\sigma^K$ instead of the parameters $\sigma$),
\begin{equation}\label{DilatedBCoversBHkpGeneralizedHausdorffMeasures}
B^H_{k_p} \subset 2B \,\,\,\,\, \text{ and } \,\,\,\,\, \widetilde{B^H_{k_p}} \subset 4B.
\end{equation}

However, we change slightly the dilation argument in \eqref{ProofPackingTarget} and \eqref{ProofPackingTargetPart2}, since the one in those equations is not best suited for the function $\varepsilon (t)$. Namely, if $\{ B_n \}$ is a disjoint finite family of building blocks contained in $B$, then as in \eqref{ProofPackingTarget} and \eqref{ProofPackingTargetPart2}, using \eqref{RadiusBHkpAsFunctionOfPGeneralizedHausdorffMeasures} and \eqref{SigmaAsFunctionOfRGeneralizedHausdorffMeasure},
\begin{eqnarray}\label{ProofPackingSourceGaneralizedHausdorffMeasures}
& \ds{\sum_{B_n \in \mathcal{C} }} & h^{(S)} \left(r(B_n) \right) \; \leq \; 
\sum_{p=0}^{m} h^{(S)} \left( r(B^H_{k_p}) \right) = \nonumber  \\
& \ds{= \sum_{p=0}^{m}} & \left[ (\sigma_{1,j_1})^K \, R_{1,j_1} \dots  
(\sigma_{H,j_{H_{k_p}}})^K \, R_{H,j_{H_{k_p}}} \right]^d \, \varepsilon \left\{ (\sigma_{1,j_1})^K \, R_{1,j_1} \dots  (\sigma_{H,j_{H_{k_p}}})^K \, R_{H,j_{H_{k_p}}}   \right\} =  \nonumber  \\
& \ds{= \sum_{p=0}^{m}} & \left(  R_{1,j_1} \dots R_{H,j_{H_{k_p}}} \right)^2 \lesssim
\left\{ \frac{r(B)}{ \left(   \sigma_{1,j_1} \; \dots \;  \sigma_{H-1,j_{H-1}} \right)^K } \right\}^2
\end{eqnarray}\\
where in the last step we used \eqref{DilatedBCoversBHkpGeneralizedHausdorffMeasures} and an appropriate argument using a dilation of ratio $\frac{1}{\left(   \sigma_{1,j_1} \; \dots \;  \sigma_{H-1,j_{H-1}} \right)^K}$ composed with a translation that we will describe momentarily. 
Since we have that $B^{H-1}_{i_0} = \varphi^{i_1}_{1,j_1}  \circ \dots \circ \varphi^{i_{H-1}}_{H-1,j_{H-1}} (\overline{\D}) = g^{(H-1)}_{i_0} ( \, \overline{\D} \, )$, for some choice of $i_1, \dots ,i_{H-1}$ and of $j_1, \dots ,j_{H-1}$, where $\varphi^{i_k}_{k,j_k}(z) = z^{i_k}_{k,j_k} + (\sigma_{k,j_k})^K \, R_{k,j_k} \, z$, if we denote by $\widetilde{ \varphi^{i_k}_{k,j_k}(z) } = z^{i_k}_{k,j_k} + R_{k,j_k} \, z$, and $\widetilde{ g^{(H-1)}_{i_0} } = \widetilde{ \varphi^{i_1}_{1,j_1} } \circ \dots \circ \widetilde{ \varphi^{i_{H-1}}_{H-1,j_{H-1}} }$, then $\pi \left(  R_{1,j_1} \dots R_{H,j_{H_{k_p}}} \right)^2$ is the area of $\left[ \widetilde{ g^{(H-1)}_{i_0} } \circ \left( g^{(H-1)}_{i_0}  \right)^{-1}  \right]    \left( \widetilde{B^H_{k_p}} \right)$, and 
the dilation argument follows by \eqref{DilatedBCoversBHkpGeneralizedHausdorffMeasures}, since the area of $\left[ \widetilde{ g^{(H-1)}_{i_0} } \circ \left( g^{(H-1)}_{i_0}  \right)^{-1}  \right]  \left( 4B \right)$ is $16 \pi \left\{ \frac{r(B)}{ \left(   \sigma_{1,j_1} \; \dots \;  \sigma_{H-1,j_{H-1}} \right)^K } \right\}^2$. I.e. the dilation composed with the translation $ \left[ \widetilde{ g^{(H-1)}_{i_0} } \circ \left( g^{(H-1)}_{i_0}  \right)^{-1}  \right] $ corresponds to repeating the basic construction with all the parameters $\sigma$ taken to be $=1$.

In order to complete the proof of the packing condition \eqref{PackingSourceGeneralizedHausdorffMeasures}, we will show that 
\begin{equation}\label{ProofPackingSourceGaneralizedHausdorffMeasures2}
\left\{ \frac{r(B)}{ \left(   \sigma_{1,j_1} \; \dots \;  \sigma_{H-1,j_{H-1}} \right)^K } \right\}^2 \lesssim h^{(S)} \left( r(B) \right)
\end{equation}

By \eqref{wlogRBLessThanRBHMinus1GeneralizedHausdorffMeasures}, we can write 
\begin{equation}\label{RadiusOfBInTermsOfThetaSourceGeneralizedHausdorffMeasures}
r(B)=  (\sigma_{1,j_1})^K \, R_{1,j_1} \; \dots \; (\sigma_{H-1,j_{H-1}})^K \, R_{H-1,j_{H-1}} \; \; \theta_s = s_{j_1, \dots ,j_{H-1}} \; \; \theta_s
\end{equation}
where $\ds{\max_{0 \leq p \leq m} } \left\{ (\sigma_{H,j_{H_{k_p}}})^K \, R_{H,j_{H_{k_p}}} \right\}   \leq \theta_s \leq 1$, by the equation analogous to \eqref{CompareRadiusBWithRadiusWideTildeBHkp}, and the subindex $s$ in $\theta_s$ corresponds to ``source". Using the definition that $h^{(S)} (t) = t^d \varepsilon(t)$, we see that \eqref{ProofPackingSourceGaneralizedHausdorffMeasures2} holds if and only if
\begin{equation}\label{ConditionThetaS}
\frac{ \left(   R_{1,j_1} \; \dots \;  R_{H-1,j_{H-1}} \right)^{2-d} }{ \left(   \sigma_{1,j_1} \; \dots \;  \sigma_{H-1,j_{H-1}} \right)^{Kd} } (\theta_s)^{2-d} \lesssim \varepsilon (r(B))
\end{equation}

which in turn, by \eqref{SigmaAsFunctionOfRGeneralizedHausdorffMeasure} and \eqref{RadiusOfBInTermsOfThetaSourceGeneralizedHausdorffMeasures} holds if and only if
\begin{equation}\label{FinalConditionForSourceInTermsOfTheta}
\theta_s \; \varepsilon^{\frac{1}{2-d}} (s_{j_1, \dots ,j_{H-1}} ) \; \lesssim \; \varepsilon^{\frac{1}{2-d}} (\theta_s \; s_{j_1, \dots ,j_{H-1}} )
\end{equation}

A parallel argument for the target set yields the corresponding equations to \eqref{SummingInAllChildrenOfD} and \eqref{SummingInAllNGenerationDescendantsOfASubfamilyOfD} without difficulty (since the only radii involved are those of the building blocks, i.e. those of the form $t_{j_1,...,j_N}$, for which $h^{(T)}$ is defined by \eqref{DefinitionOfH(T)}.) And also, using the parameters $\sigma$ instead of the parameters $\sigma^K$, and denoting (see \eqref{DefinitionOfH(T)})
\begin{equation}\label{DefinitionVarepsilonPrime}
\varepsilon'(t_{j_1,...,j_N})=  \varepsilon^{ \frac{2}{2+(K-1)d} } \left( s_{j_1,...,j_N} \right)
\end{equation}
then, following the proof of Lemma \ref{LemmaPackingTarget}, we can assume without loss of generality that
\begin{equation}\label{wlogRBLessThanRBHMinus1TargetGeneralizedHausdorffMeasures}
r(B) \leq r(B^{H-1}_{i_0}) = t_{j_1,...,j_{H-1}}= \left( \sigma_{1,j_1} \, R_{1,j_1} \right) \dots \left( \sigma_{H-1,j_{H-1}}\, R_{H-1,j_{H-1}} \right).
\end{equation}

Then we can write
\begin{equation}\label{RadiusOfBInTermsOfThetaTargetGeneralizedHausdorffMeasures}
r(B)=  \sigma_{1,j_1} \, R_{1,j_1} \; \dots \; \sigma_{H-1,j_{H-1}} \, R_{H-1,j_{H-1}} \; \; \theta_t = t_{j_1, \dots ,j_{H-1}} \; \; \theta_t
\end{equation}
where $\ds{\max_{0 \leq p \leq m} } \left\{ \sigma_{H,j_{H_{k_p}}} \, R_{H,j_{H_{k_p}}} \right\}   \leq \theta_t \leq 1$, by \eqref{CompareRadiusBWithRadiusWideTildeBHkp}. The subindex $t$ in $\theta_t$ corresponds to ``target".

Using \eqref{SigmaAsFunctionOfRGeneralizedHausdorffMeasureForTargetSet}, a reasoning similar to the one done previously towards \eqref{FinalConditionForSourceInTermsOfTheta} yields
\begin{equation}\label{FinalConditionForTargetInTermsOfThetaNoThetaTilde}
(\theta_t)^{ \frac{ 2(2-d) }{2+(K-1)d }  } \; \; \varepsilon^{\frac{ 2 }{2+(K-1)d } } (s_{j_1, \dots ,j_{H-1}}) \; \lesssim \; \varepsilon' (r(B))
\end{equation}
which can be rewritten as (see \eqref{DefinitionVarepsilonPrime} and \eqref{RadiusOfBInTermsOfThetaTargetGeneralizedHausdorffMeasures}) (notice the resemblance to \eqref{FinalConditionForSourceInTermsOfTheta})
\begin{equation}\label{ResultingEquationWithThetaTilde}
\theta_t \; \varepsilon^{\frac{ 1 }{2-d } }  (s_{j_1, \dots ,j_{H-1}}) \; \lesssim \;  \varepsilon^{\frac{ 1 }{2-d } }  ( \widetilde{\theta_t}  \; s_{j_1, \dots ,j_{H-1}})
\end{equation}
where $\widetilde{\theta_t}$ is defined (somewhat abusing notation) by
\begin{equation}\label{EquationDefiningThetaTilde}
\varepsilon'( t_{j_1, \dots ,j_{H-1}} \; \; \theta_t)=  \varepsilon^{ \frac{2}{2+(K-1)d} } \left( s_{j_1,...,j_{H-1}} \; \; \widetilde{\theta_t} \right)
\end{equation}
in whatever strictly increasing extension of $h^{(T)}(t)$ we choose (see \eqref{DefinitionOfH(T)} and the comment below it.) I.e. $\widetilde{\theta_t}$ is defined so that $t_{j_1,...,j_{H-1}} \theta_t \to s_{j_1,...,j_{H-1}} \widetilde{\theta_t}$ under (the continuous extension of) the map $t_{j_1,...,j_N} \to s_{j_1,...,j_N}$. We will momentarily take care of such extension. The abuse of notation comes from the fact that given that the correspondence $t_{j_1,...,j_N} \to s_{j_1,...,j_N}$ is actually a function the way we defined it, it is convenient (and that is how we do it) to define the correspondence $\theta_t \to \widetilde{\theta_t}$ piecewise and inductively in between two consecutive points of the set $T := \{t_{j_1,...,j_N} \}_{\substack{j_1,...,j_N \\N=1,2, \dots } }$. However, as it is written in \eqref{EquationDefiningThetaTilde}, given that $ \ds{\max_{0 \leq p \leq m} } \left\{ \sigma_{H,j_{H_{k_p}}} \, R_{H,j_{H_{k_p}}} \right\}  \leq \theta_t \leq 1$, and, consequently, $ \ds{\max_{0 \leq p \leq m} } \left\{ (\sigma_{H,j_{H_{k_p}}})^K \, R_{H,j_{H_{k_p}}} \right\}  \leq \widetilde{\theta_t} \leq 1$, it looks as if one should define the correspondence $\theta_t \to \widetilde{\theta_t}$ from a radius $t_{j_1,...,j_N}$ of a ball to the radii $t_{j_1,...,j_N,j_{N+1}}$ of the children of that ball, which are not in general adjacent points to $t_{j_1,...,j_N}$ in the set $T$, due to the way we chose our parameters (see the comment below \eqref{ComparisonForH(T)AndWidetildeH(T)}.)

In any case, the correspondence $\theta_t \to \widetilde{\theta_t}$ is increasing and $\theta_t =1$ corresponds to $\widetilde{\theta_t}=1$. Thinking of \eqref{PowerComparisonOfSNAndTN}, we also demand that the correspondence $\theta_t \to \widetilde{\theta_t}$ satisfies that 
\begin{equation}\label{PowerComparisonOfSNAndTNWithThetas}
\left(  t_{j_1,...,j_N} \; \theta_t \right)^K < s_{j_1,...,j_N} \; \widetilde{\theta_t} < t_{j_1,...,j_N} \; \theta_t
\end{equation}
which is easily seen to be feasible if one thinks graphically: \eqref{PowerComparisonOfSNAndTN} means that the graph of the correspondence $t_{j_1,...,j_N} \to s_{j_1,...,j_N}$ is trapped in between the graph of $t_{j_1,...,j_N} \to (t_{j_1,...,j_N})^K$ (i.e. $x \to x^K$) and $t_{j_1,...,j_N} \to t_{j_1,...,j_N}$ (i.e. $x \to x$), and \eqref{PowerComparisonOfSNAndTNWithThetas} simply requires this trapping between those graphs to continue holding in order to extend the correspondence $t_{j_1,...,j_N} \to s_{j_1,...,j_N}$ to a correspondence $t \to s$ for all $t< t_0$.
This defines an extension of $h^{(T)}$ from $T := \{t_{j_1,...,j_N} \}_{\substack{j_1,...,j_N \\N=1,2, \dots } }$ to $t< t_0$ suitable for our purposes. Namely,
\begin{equation}\label{DefinitionOfHTExtended}
h^{(T)} (t_{j_1,...,j_{H-1}} \theta_t ) = (t_{j_1,...,j_{H-1}} \theta_t )^{d'}   \varepsilon^{ \frac{2}{2+(K-1)d} } \left( s_{j_1,...,j_{H-1}} \widetilde{\theta_t}  \right) \ = (t_{j_1,...,j_{H-1}} \theta_t )^{d'} \varepsilon' (t_{j_1,...,j_{H-1}} \theta_t ).
\end{equation}
This definition of  $h^{(T)}$  and \eqref{PowerComparisonOfSNAndTNWithThetas} yield the ``appropriate" inequality relating $h^{(T)}$ and $\widetilde{h^{(T)}}$ that we mentioned right after the statement of Theorem \ref{SharpExampleTheoremForGeneralizedHausdorffMeasures}.

Going back to the source set, we readily have the proof of case $(a)$ in Theorem \ref{SharpExampleTheoremForGeneralizedHausdorffMeasures} (proving \eqref{FinalConditionForSourceInTermsOfTheta}):
\begin{equation}\label{ProofOfCaseATheoremGeneralizedHausdorffMeasuresSource}
\theta_s \; \varepsilon^{\frac{1}{2-d}} (s_{j_1, \dots ,j_{H-1}} ) \; \leq \;
\varepsilon^{\frac{1}{2-d}} (s_{j_1, \dots ,j_{H-1}} ) \; \leq \;
\varepsilon^{\frac{1}{2-d}} (\theta_s \; s_{j_1, \dots ,j_{H-1}} )
\end{equation}
since $\theta_s  \leq 1$ and $\varepsilon (t)$ is strictly decreasing. Consequently, as in the proof of Theorem \ref{TheoremConstructionForUsualHausdorffMeasures}, $\H^{ h^{(S)} } (E) >0$. Correspondingly, at the target set, for analogous reasons the proof of \eqref{ResultingEquationWithThetaTilde} is
\begin{equation}\label{ProofOfCaseATheoremGeneralizedHausdorffMeasuresTarget}
\theta_t \; \varepsilon^{\frac{1}{2-d}} (s_{j_1, \dots ,j_{H-1}} ) \; \leq \;
\varepsilon^{\frac{1}{2-d}} (s_{j_1, \dots ,j_{H-1}} ) \; \leq \;
\varepsilon^{\frac{1}{2-d}} (\widetilde{\theta_t} \; s_{j_1, \dots ,j_{H-1}} ).
\end{equation}

Consequently, $\H^{ h^{(T)} } (E) >0$ for $h^{(T)}(t)$ as we have implicitly defined it. Since $h^{(T)}(t) \leq \widetilde{h^{(T)}(t)}$ for all $t<t_0$ by \eqref{PowerComparisonOfSNAndTNWithThetas} (see also \eqref{ComparisonForH(T)AndWidetildeH(T)}), then $\H^{ \widetilde{h^{(T)}} } (E) >0$.

The proof of case $(b)$ in Theorem \ref{SharpExampleTheoremForGeneralizedHausdorffMeasures} is also quite simple now. Indeed \eqref{FinalConditionForSourceInTermsOfTheta} follows immediately from the hypothesis that $\frac{\varepsilon^{\frac{1}{2-d}}(t)}{t}$ is a decreasing function of $t$, since $\theta_s \leq 1$. As a consequence, $\H^{ h^{(S)} } (E) >0$. And \eqref{ResultingEquationWithThetaTilde} can be proven by using that $\frac{\varepsilon^{\frac{1}{2-d}}(t)}{t}$ is a decreasing function of $t$ (since $\theta_t \leq 1$), \eqref{PowerComparisonOfSNAndTN}, that $\varepsilon(t)$ is increasing, the logarithmic-type hypothesis of Theorem \ref{SharpExampleTheoremForGeneralizedHausdorffMeasures}, and \eqref{PowerComparisonOfSNAndTNWithThetas} as follows
\begin{eqnarray}\label{ProofOfCaseBTheoremGeneralizedHausdorffMeasuresTarget}
\theta_t \; \varepsilon^{\frac{1}{2-d}} (s_{j_1, \dots ,j_{H-1}} ) \; \leq \;
\varepsilon^{\frac{1}{2-d}} (\theta_t \; s_{j_1, \dots ,j_{H-1}} ) \; \leq \;
\varepsilon^{\frac{1}{2-d}} (\theta_t \; t_{j_1, \dots ,j_{H-1}} ) \; \lesssim \; \nonumber \\
\lesssim \varepsilon^{\frac{1}{2-d}} \left\{ (\theta_t \; t_{j_1, \dots ,j_{H-1}} )^K \right\} \; \leq \;
\varepsilon^{\frac{1}{2-d}} (\widetilde{\theta_t} \; s_{j_1, \dots ,j_{H-1}} ).
\end{eqnarray}

Consequently, in a similar fashion to case $(a)$, $\H^{ \widetilde{h^{(T)}} } (E) >0$. Notice however that in case $(b)$ of Theorem \ref{SharpExampleTheoremForGeneralizedHausdorffMeasures}, due to the logarithmic-type hypothesis of Theorem \ref{SharpExampleTheoremForGeneralizedHausdorffMeasures}, $\widetilde{h^{(T)}} (t) \approx h^{(T)} (t)$, (see \eqref{PowerComparisonOfSNAndTNWithThetas} and \eqref{DefinitionOfHTExtended}) so that in this case we also have $0< \H^{ \widetilde{h^{(T)}} } (E) < \infty $ (see below \eqref{DefinitionOfH(T)}.)

This finishes the proof of Theorem \ref{SharpExampleTheoremForGeneralizedHausdorffMeasures}.

\end{proof}

The next step is to show that the technical hypotheses of Theorem \ref{SharpExampleTheoremForGeneralizedHausdorffMeasures} are actually not that restrictive, in that we can always reduce to them if our purpose is to prove that Theorem \ref{theorem1.1inACMOU} has a sharp conclusion and that Theorem \ref{theorem2.5inACMOU} has a sharp hypothesis. This is essentially the content of the next two lemmata.

\begin{lemma}\label{CanAlwaysReduceToTheoremGeneralizedHausdorffMeasuresCaseA}\textbf{[Corresponding to case (a) of Theorem \ref{SharpExampleTheoremForGeneralizedHausdorffMeasures}]}
Assume that $h^{(S)} (t) = t^d \varepsilon(t)$ is a gauge function where $0<d<2$, and $\varepsilon(t) \to \infty$ as $t \to 0$, but for all $\alpha >0$, we have that $t^\alpha \varepsilon(t) \to 0$ as $t \to 0$.

Then there exists $\widetilde{\varepsilon} (t)$ satisfying the same conditions as $\varepsilon(t)$ but also $\widetilde{\varepsilon} (t)$ is strictly decreasing, and  $\widetilde{\varepsilon} (t) \leq  \varepsilon(t)$.
\end{lemma}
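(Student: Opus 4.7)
[Proof proposal for Lemma \ref{CanAlwaysReduceToTheoremGeneralizedHausdorffMeasuresCaseA}]
The plan is to first produce a non-increasing continuous minorant of $\varepsilon$ with the required growth properties, and then perturb it so that it becomes strictly decreasing without losing any of the properties. Concretely, restrict attention to $t \in (0, t_0]$ (which is all that matters for a gauge function near $0$) and define
\[
\widetilde{\varepsilon}_0(t) \;:=\; \inf_{0 < s \leq t} \varepsilon(s).
\]
This is the natural ``decreasing envelope'' of $\varepsilon$ from the left, and it immediately satisfies $\widetilde{\varepsilon}_0(t) \leq \varepsilon(t)$ and the non-increasing property. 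First I would check its continuity: since $\varepsilon$ is continuous on $(0, t_0]$ (because $h^{(S)}(t) = t^d \varepsilon(t)$ is a gauge function and $t^d > 0$ for $t > 0$), the shrinking family of sets $(0, t+h]$ as $h \to 0^+$ and the expanding family $(0, t-h]$ as $h \to 0^+$ together with continuity of $\varepsilon$ force $\widetilde{\varepsilon}_0(t+) = \widetilde{\varepsilon}_0(t-) = \widetilde{\varepsilon}_0(t)$. Next, $\widetilde{\varepsilon}_0(t) \to \infty$ as $t \to 0$ because $\varepsilon(s) \to \infty$ as $s \to 0$: for any $M$ there is $\delta > 0$ with $\varepsilon(s) \geq M$ for $s < \delta$, and then $\widetilde{\varepsilon}_0(t) \geq M$ for all $t < \delta$. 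Finally, $t^\alpha \widetilde{\varepsilon}_0(t) \leq t^\alpha \varepsilon(t) \to 0$ for every $\alpha > 0$, so $\widetilde{\varepsilon}_0$ already satisfies the hypotheses on $\varepsilon$ listed in the statement, plus monotonicity, except for \emph{strict} monotonicity.

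The main obstacle, as I see it, is precisely the strictness: $\widetilde{\varepsilon}_0$ may have arbitrarily long plateaus (the pointwise infimum collapses flat segments wherever $\varepsilon$ dips transiently) and we are not allowed to raise it above $\varepsilon$ to break them. The trick I would use is to multiply by a tiny strictly decreasing factor bounded above by $1$. Choose $\eta$ with $0 < \eta < 1/t_0$ and set
\[
\widetilde{\varepsilon}(t) \;:=\; \widetilde{\varepsilon}_0(t)\,(1 - \eta t), \qquad t \in (0, t_0].
\]
Then $0 < 1 - \eta t < 1$, so $\widetilde{\varepsilon}(t) \leq \widetilde{\varepsilon}_0(t) \leq \varepsilon(t)$ as required. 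For strict monotonicity, given $0 < s < t \leq t_0$, I would argue
\[
\widetilde{\varepsilon}(s) \;=\; \widetilde{\varepsilon}_0(s)(1 - \eta s) \;\geq\; \widetilde{\varepsilon}_0(t)(1 - \eta s) \;>\; \widetilde{\varepsilon}_0(t)(1 - \eta t) \;=\; \widetilde{\varepsilon}(t),
\]
using that $\widetilde{\varepsilon}_0$ is non-increasing and $\widetilde{\varepsilon}_0(t) > 0$; the latter holds near $0$ because $\widetilde{\varepsilon}_0(t) \to \infty$, so after shrinking $t_0$ if necessary we may assume $\widetilde{\varepsilon}_0(t) \geq 1$ on $(0, t_0]$.

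It remains to confirm that $\widetilde{\varepsilon}$ inherits the asymptotic conditions. Continuity is clear from the continuity of $\widetilde{\varepsilon}_0$ and of $1 - \eta t$. Since $1 - \eta t \to 1$ as $t \to 0$, we have $\widetilde{\varepsilon}(t) \geq \tfrac{1}{2}\widetilde{\varepsilon}_0(t)$ for $t$ small, hence $\widetilde{\varepsilon}(t) \to \infty$ as $t \to 0$. And $t^\alpha \widetilde{\varepsilon}(t) \leq t^\alpha \widetilde{\varepsilon}_0(t) \leq t^\alpha \varepsilon(t) \to 0$ for every $\alpha > 0$. This gives all the required properties, completing the reduction.
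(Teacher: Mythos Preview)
Your first step, taking $\widetilde{\varepsilon}_0(t)=\inf_{0<s\le t}\varepsilon(s)$, matches the paper exactly and all the properties you verify for it are fine. The gap is in the second step. The phrase ``satisfying the same conditions as $\varepsilon(t)$'' includes the requirement that $t^d\widetilde{\varepsilon}(t)$ be a gauge function, i.e.\ non-decreasing; this is precisely what is needed to feed $\widetilde{\varepsilon}$ back into Theorem~\ref{SharpExampleTheoremForGeneralizedHausdorffMeasures}. You never check this, and the multiplicative perturbation $(1-\eta t)$ can destroy it. Indeed, on any interval where $\widetilde{\varepsilon}_0=\varepsilon$ (which happens wherever $\varepsilon$ is already at its running minimum), one has $t^d\widetilde{\varepsilon}(t)=h^{(S)}(t)(1-\eta t)$. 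If $h^{(S)}$ is constant (or increases arbitrarily slowly) on such an interval, this product is strictly decreasing there for \emph{every} $\eta>0$. A concrete instance: take $d=1$, let $h^{(S)}(t)=t\log(1/t)$ for $t\le a$ and $h^{(S)}(t)=a\log(1/a)$ on $[a,b]$; then $\varepsilon$ is strictly decreasing on $(0,b]$, so $\widetilde{\varepsilon}_0=\varepsilon$ there, and $t\,\widetilde{\varepsilon}(t)=a\log(1/a)(1-\eta t)$ is strictly decreasing on $[a,b]$. Adding a bump to $\varepsilon$ beyond $b$ makes the lemma nontrivial without affecting this interval.

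The paper avoids this by perturbing additively rather than multiplicatively: it replaces the plateaus of $\widetilde{\varepsilon}_0$ by line segments with slopes $m_i$ satisfying $|m_i|\le \tfrac12\tfrac{d}{t_0}\widetilde{\varepsilon}_0(t_0)$, and more generally arranges $|\widetilde{\varepsilon}\,'(t)|<\tfrac{d}{t}\widetilde{\varepsilon}(t)$, which gives $(t^d\widetilde{\varepsilon}(t))'>0$ directly. Your argument can be repaired along similar lines, but not with the global factor $(1-\eta t)$ as written.
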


\begin{proof}
Take $\widetilde{\varepsilon}(t) = \inf \{ \varepsilon(s) : s \leq t \}$. Then $\widetilde{\varepsilon}(t)$ is decreasing, and $0 \leq \widetilde{\varepsilon} (t) \leq  \varepsilon(t)$, and hence for all $\alpha >0$, $t^\alpha \widetilde{\varepsilon}(t) \to 0$ as $t \to 0$. Also, $\widetilde{\varepsilon}(t)  \to \infty$,  as $t \to 0$, by the definition of limit. From the definition of $\widetilde{\varepsilon}(t)$ it also follows readily that $t^d \widetilde{\varepsilon}(t)$ is strictly increasing, since whenever $\widetilde{\varepsilon}(t)$ is locally constant, $t^d$ is strictly increasing. More precisely, notice that the $\inf$ in the definition of $\widetilde{\varepsilon}(t)$ is actually attained. Then, pick $t_0$ such that $\widetilde{\varepsilon}(t_0) = \varepsilon(t_0)$, and consider only $t < t_0$. An elementary case by case consideration according as $\widetilde{\varepsilon}(t_i) = \varepsilon(t_i)$ or $\widetilde{\varepsilon}(t_i) \neq \varepsilon(t_i)$ for $t_1 < t_2 \leq t_0$ yields 
$(t_1)^d \widetilde{\varepsilon}(t_1) <  (t_2)^d \widetilde{\varepsilon}(t_2)$ (whenever $\widetilde{\varepsilon}(t)$ is locally constant at $t_i$, compare with the largest or smallest $t^{\ast}$ such that $\widetilde{\varepsilon}(t^{\ast}) = \widetilde{\varepsilon}(t_i)$.)

In order to get $\widetilde{\varepsilon}(t)$ to be strictly decreasing, substitute the locally constant pieces of $\widetilde{\varepsilon}(t)$ by straight line segments with strictly negative slopes $m_i$ satisfying $|m_i| \leq \frac{1}{2} \frac{d}{t_0} \widetilde{\varepsilon}(t_0)$, and with $m_i$ so small that the outcome is continuous and strictly decreasing (and the value of $\widetilde{\varepsilon}(t_0)$ has at most halved.) Now smoothen the outcome (let us keep calling it $\widetilde{\varepsilon}(t)$ ) in such a way that, for $t <t_0$, $|\widetilde{\varepsilon} \: '(t)| < \frac{d}{t} \widetilde{\varepsilon}(t)$, which ensures that $\tilde{g}(t)= t^d \widetilde{\varepsilon}(t)$ is an increasing function of $t$, since $\widetilde{g} \: '(t) >0$ for $t <t_0$.
\end{proof}

%
%
%
% LEMA PARA EL CASO B ABAJO
%
%
%

And analogously,

\begin{lemma}\label{CanAlwaysReduceToTheoremGeneralizedHausdorffMeasuresCaseB}\textbf{[Corresponding to case (b) of Theorem \ref{SharpExampleTheoremForGeneralizedHausdorffMeasures}]}
Assume that $h^{(S)} (t) = t^d \varepsilon(t)$ is a gauge function where $0<d<2$, and $\varepsilon(t) \to 0$ as $t \to 0$, but for all $\alpha >0$, we have that $\frac{t^\alpha}{\varepsilon(t)} \to 0$ as $t \to 0$.

Then there exists $\widetilde{\varepsilon} (t)$ satisfying the same conditions as $\varepsilon(t)$ but also $\widetilde{\varepsilon} (t)$ is strictly increasing and satisfies that $\frac{\widetilde{\varepsilon}^{\; \frac{1}{2-d}}(t)}{t}$ is a decreasing function of $t$, and the logarithmic-type condition that $\widetilde{\varepsilon} (t) \lesssim \widetilde{\varepsilon} (t^K)$ for $t<t_0$ and $\varepsilon(t) \leq \widetilde{\varepsilon} (t)$.
\end{lemma}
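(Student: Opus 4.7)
The plan is to follow the structure of Lemma \ref{CanAlwaysReduceToTheoremGeneralizedHausdorffMeasuresCaseA}, but since the logarithmic-type condition and the decreasing-ratio condition are genuinely restrictive shape constraints on $\widetilde\varepsilon$, a single monotonization step is no longer enough; I would add a second regularization by an inf-convolution performed in \emph{doubly}-logarithmic coordinates.

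First I set $\varepsilon_0(t):=\sup\{\varepsilon(s):s\le t\}$. Exactly as in Lemma \ref{CanAlwaysReduceToTheoremGeneralizedHausdorffMeasuresCaseA}, this $\varepsilon_0$ is non-decreasing, dominates $\varepsilon$, tends to $0$ as $t\to 0$, and satisfies $t^\alpha/\varepsilon_0(t)\to 0$ for every $\alpha>0$. Change variables by letting $x=\log(1/t)$, $u=\log x$, and $\psi(u):=-\log\varepsilon_0\bigl(\exp(-e^u)\bigr)$. Then $\psi$ is non-decreasing with $\psi(u)\to\infty$ and $\psi(u)=o(e^u)$. Fix $\beta\in(0,2-d)$ and set
\[
\widetilde\psi(u):=\inf_{v\le u}\bigl[\psi(v)+\beta(u-v)\bigr],\qquad \widetilde\varepsilon(t):=\exp\bigl(-\widetilde\psi(\log\log(1/t))\bigr)
\]
for $t<t_0$ (the size of $t_0$ will be pinned down below). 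Standard inf-convolution arguments then yield: (i) $\widetilde\psi\le\psi$ (take $v=u$); (ii) $\widetilde\psi$ is non-decreasing (short case-split on whether a near-minimizer for $u_2$ lies in $[u_1,u_2]$ or below $u_1$, using $\psi$ non-decreasing and $\widetilde\psi\le\psi$); (iii) $\widetilde\psi(u)\to\infty$ (any bounded $\widetilde\psi(u_n)$ with $u_n\to\infty$ would force near-minimizers $v_n\ge u_n-O(1)$, contradicting $\psi(v_n)\to\infty$); (iv) $\widetilde\psi(u)=o(e^u)$, since $\widetilde\psi\le\psi$; and, most importantly, (v) $\widetilde\psi$ is $\beta$-Lipschitz in $u$.

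Translating (i)--(iv) back yields immediately $\widetilde\varepsilon\ge\varepsilon$, $\widetilde\varepsilon$ non-decreasing, $\widetilde\varepsilon(t)\to 0$, and $t^\alpha/\widetilde\varepsilon(t)\to 0$. Using the identity $\log\log(1/t^K)=\log K+\log\log(1/t)$, property (v) gives
\[
\widetilde\varepsilon(t)/\widetilde\varepsilon(t^K)=\exp\bigl(\widetilde\psi(u+\log K)-\widetilde\psi(u)\bigr)\le e^{\beta\log K}=K^\beta,
\]
which is the logarithmic-type condition. For the decreasing-ratio condition set $\widetilde\phi(x):=\widetilde\psi(\log x)$, so by (v) and the chain rule $\widetilde\phi'(x)\le\beta/x$ a.e.; since the condition ``$\widetilde\varepsilon(t)/t^{2-d}$ is non-increasing in $t$'' is equivalent to ``$(2-d)x-\widetilde\phi(x)$ is non-decreasing in $x$'', choosing $t_0$ so small that $\log(1/t_0)\ge\beta/(2-d)$ makes it automatic. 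A final small strictly-increasing perturbation $\widetilde\varepsilon(t)+\delta\,t^{2-d}g(t)$, with $g$ slowly decreasing and $\delta>0$ tiny (in the spirit of the final smoothing step in Lemma \ref{CanAlwaysReduceToTheoremGeneralizedHausdorffMeasuresCaseA}), upgrades monotonicity to strict without disturbing any of the other conditions.

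The main obstacle, and the reason the argument does not parallel Lemma \ref{CanAlwaysReduceToTheoremGeneralizedHausdorffMeasuresCaseA} in a straightforward way, is the logarithmic-type condition: a linear inf-convolution performed in $x$ instead of $u=\log x$ would only give $\widetilde\phi(Kx)-\widetilde\phi(x)=O(x)$, far too weak. Passing to the doubly-logarithmic variable converts $t\mapsto t^K$ into the unit translation $u\mapsto u+\log K$, so the logarithmic-type condition becomes ordinary Lipschitz continuity; the resulting $1/x$-decay of $\widetilde\phi'(x)$ is exactly what the decreasing-ratio condition can absorb, provided $t_0$ is taken small enough.
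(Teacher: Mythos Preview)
Your proof is correct and takes a genuinely different route from the paper's. The paper works with $\varepsilon^{1/(2-d)}$ throughout: it first monotonizes, then takes a concave envelope (to force the ratio $\varepsilon_3(t)/t$ to be decreasing), and then builds by hand a step function along the sequence $t_n=t_0^{K^n}$, imposing $\varepsilon_4(t_n)\ge \tfrac1K\varepsilon_4(t_{n-1})$ at each step to get the logarithmic-type condition; two further passes (a multiplicative shift and linear interpolation) turn this into a continuous strictly increasing function, and only at the very end is everything raised to the power $2-d$. Your approach replaces all of this by a single analytic regularization: the change of variables $u=\log\log(1/t)$ is exactly the one that turns $t\mapsto t^K$ into a translation, so the logarithmic-type condition becomes a Lipschitz bound, which the inf-convolution $\widetilde\psi(u)=\inf_{v\le u}[\psi(v)+\beta(u-v)]$ delivers automatically; the same Lipschitz bound, read back as $\widetilde\phi'(x)\le\beta/x$ in the single-log variable $x=\log(1/t)$, gives the decreasing-ratio condition for free once $x$ is large enough. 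This is more conceptual and considerably shorter than the paper's six-stage construction, though the paper's argument is more elementary in that it never leaves the $t$-variable and uses only concave envelopes and piecewise-linear interpolation. Two small points you should tighten: first, the final perturbation can simply be $\widetilde\varepsilon(t)+\delta\,t^{2-d}$ (that is, $g\equiv 1$), since then the ratio $\widetilde\varepsilon(t)/t^{2-d}$ just shifts by $\delta$ and remains non-increasing, strict monotonicity is immediate, and the logarithmic-type condition survives because $t^{2-d}/\widetilde\varepsilon(t)\to 0$ forces $\delta\,t^{2-d}\le\widetilde\varepsilon(t)\lesssim\widetilde\varepsilon(t^K)$ for small $t$; second, you should note that $\psi$ is bounded below (since $\varepsilon_0(t)<1$ for $t$ small), so the infimum defining $\widetilde\psi$ is finite and your argument in (iii) goes through.
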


\begin{proof}
We will successively modify $\varepsilon(t)$ to get each one of the desired properties. Let $\varepsilon_{1}(t) = \varepsilon^{\; \frac{1}{2-d}}(t)$. Now let $\varepsilon_2(t) = \sup \{ \varepsilon_1(s) : s \leq t \}$. If need be, modify $\varepsilon_2(t)$ slightly substituting the locally constant pieces by straight line segments with small strictly positive slope, to make $\varepsilon_2(t)$ strictly increasing. So far $\varepsilon_2(t) \geq \varepsilon_1(t)$. Hence for all $\alpha >0$, we have that $\frac{t^\alpha}{\varepsilon_2(t)} \to 0$ as $t \to 0$. Note also that $\varepsilon_2(t) \to 0$ as $t \to 0$, and that $\varepsilon_2(t)$ is strictly increasing.

Let now $\varepsilon_3(t)$ be the concave envelope of $\varepsilon_2(t)$ (i.e. $\varepsilon_2(t)$ defines the set $\{ (t,y): 0 \leq y \leq  \varepsilon_2(t) \; ; \; 0 \leq t \leq t_0 \}$, take the convex hull of that set and let the outcome define $\varepsilon_3(t)$.) Note that we are only defining these functions for $t<t_0$, and only in the very last step do they get extended to $t>0$. Then $\varepsilon_3(t) \geq \varepsilon_2(t)$, hence for all $\alpha >0$, we have that $\frac{t^\alpha}{\varepsilon_3(t)} \to 0$ as $t \to 0$. An easy argument proves by contradiction that $\varepsilon_3(t)$ is strictly increasing for $t < t_0 ' \leq t_0$. Relabel $t_0 ' $ as $t_0$. Starting from $t_0$ towards $0$, the definition of limit applied to the decreasing sequence along the $y$ axis $y_1=\varepsilon_2(t_0), y_2=2^{-M}, y_3=2^{-M-1}, \dots $ gives a corresponding sequence of values $x_1, x_2, x_3, \dots $ along the $t$ axis so that $\varepsilon_2(t) < y_i$ if $t < x_i$. These sequences determine a sequence of rectangles with vertexes $ (x_{i+1},0), (x_i,0), (x_i,y_i), (x_{i+1},y_i) .$ These rectangles lie above the graph of $\varepsilon_2(t)$. Consider for each $y_k$ the lines joining the point $(0,y_k)$ with the points $(x_{n+1},y_n), \ n=1,2, \dots , k-1$. The line (among these) with largest slope lies above $\varepsilon_3(t)$ and provides a $\delta_k$ so that $t < \delta_k \Longrightarrow \varepsilon_3(t) < y_{k-1}$. Hence  $\varepsilon_3(t) \to 0$ as $t \to 0$. Since $\varepsilon_3(t)$ is concave, then we get that $\frac{\varepsilon_3(t)}{t}$ (which is the slope of the line joining $(0,0)$ and $(t,\varepsilon_3(t))$) is a decreasing function of $t$.

Now let $t_1 = (t_0)^K$ and in general $t_n = (t_0)^{K^n} \to 0$ in a decreasing manner (we can assume that $t_0<1$), and let us define inductively the step function
\begin{equation}\label{DefinitionVarepsilon4}
\varepsilon_4(t) =
\begin{cases}
\varepsilon_3(t_0) &t\in I_0 = (t_1,t_0 ]\\
\max \{ \varepsilon_3(t_1), \frac{1}{K} \: \varepsilon_4(t_0) \} \; &t\in I_1 = (t_2,t_1 ]\\
\max \{ \varepsilon_3(t_2), \frac{1}{K} \: \varepsilon_4(t_1) \} \; &t\in I_2 = (t_3,t_2 ]\\
\vdots \\
\max \{ \varepsilon_3(t_n), \frac{1}{K} \: \varepsilon_4(t_{n-1}) \} \; &t\in I_n = (t_{n+1},t_n ]\\
\vdots \\
\end{cases}
\end{equation}

Then $\varepsilon_4(t) \geq \varepsilon_3(t)$ and hence for all $\alpha >0$, we have that $\frac{t^\alpha}{\varepsilon_4(t)} \to 0$ as $t \to 0$. Since $\varepsilon_3(t)$ is strictly increasing and $K>1$, an easy splitting into cases argument, according as $\varepsilon_4(t_k) = \varepsilon_3(t_k) $ or $\varepsilon_4(t_k) = \frac{1}{K} \: \varepsilon_4(t_{k-1})$ for $k=n-1,n$ gives that the steps of $\varepsilon_4(t)$ are strictly increasing, i.e. that $\varepsilon_4(t)$ is strictly increasing along the sequence $t_n$. 

Let us assign to $I_n$ the colour red if $\varepsilon_4(t_n) =  \frac{1}{K} \: \varepsilon_4(t_{n-1})$ and the colour black otherwise. Since $\varepsilon_4(t)$ is strictly increasing along the sequence $t_n$, and $\varepsilon_3(t) \to 0$ as $t \to 0$, if there are infinitely many black intervals $I_n$, then $\varepsilon_4(t) \to 0$ as $t \to 0$. If not, then for a sufficiently large $n_0$, $\varepsilon_4(t_{n_0 + m}) =  \frac{1}{K^m} \: \varepsilon_4(t_{n_0}) \to 0$ as $m \to \infty$. So $\varepsilon_4(t) \to 0$ as $t \to 0$.

The logarithmic-type condition $\varepsilon_4 (t) \leq K \varepsilon_4 (t^K)$ for $t<t_0$ follows readily since it holds for $t=t_n$. Let us now explicitly check that $\frac{\varepsilon_4(t)}{t}$ is decreasing along the sequence $\{ t_n \}$, since a restriction on $t_0$ will follow. Thinking of the geometric interpretation, we will call $\frac{\varepsilon_4(t)}{t}$ ``the slope at $t$ (of $\varepsilon_4(t))$" since it is the slope of the line joining $(0,0)$ and $(t,\varepsilon_4(t))$. If both $I_n$ and $I_{n+1}$ are black, then  $\frac{\varepsilon_4(t_n)}{t_n} \leq  \frac{\varepsilon_4(t_{n+1})}{t_{n+1}}$ since $\varepsilon_3(t)$ is concave. If $I_{n+1}$ is red, then the slope at $t_n$ is $\frac{\varepsilon_4(t_n)}{t_n}$ and the slope at $t_{n+1}$ is $\frac{\varepsilon_4(t_n)}{t_n} \frac{t_n}{K \: t_{n+1}}$, so we want $t_n > K \: t_{n+1} = K \: (t_n)^K$. Since $f(x)= x - Kx^K >0$ in the interval $(0, \left( \frac{1}{K} \right)^{ \frac{1}{K-1} } )$, it suffices to restrict $t_0$ to be $ t_0 < \left( \frac{1}{K} \right)^{ \frac{1}{K-1} } $. If $I_n$ is red and $I_{n+1}$ is black, then $\varepsilon_3(t_{n+1}) \geq \frac{1}{K} \varepsilon_4(t_n) $, and thus the slope at $t_n$, is bounded above as follows
$$
\frac{\varepsilon_4(t_n)}{t_n} = \frac{t_{n+1}}{t_n} \frac{\varepsilon_4(t_n)}{t_{n+1}} <
\frac{K t_{n+1}}{t_n} \frac{\varepsilon_4(t_{n+1})}{t_{n+1}} < \frac{\varepsilon_4(t_{n+1})}{t_{n+1}}
$$

again using $t_0 < \left( \frac{1}{K} \right)^{ \frac{1}{K-1} } $.

Consider now $\varepsilon_5(t)= 10 K \varepsilon_4(t)$. Since the jumps in $\varepsilon_4(t)$ from one step to the next are given by a multiplicative factor of at most $K$, $\varepsilon_5(t_{n+1}) > \varepsilon_4(t_n)$, i.e. the graphs of $\varepsilon_4(t)$ and $\varepsilon_5(t)$ leave an empty ``corridor" in between them. Of course, $\varepsilon_5(t)$ satisfies the same properties as $\varepsilon_4(t)$.

Define now $\varepsilon_6(t_n) = \varepsilon_5(t_n)$, and otherwise let $\varepsilon_6(t)$ be defined by the line segments joining $(t_{n+1},\varepsilon_5(t_{n+1}) )$ and $(t_n,\varepsilon_5(t_n))$ for all $n$. Since $\varepsilon_4(t) \leq \varepsilon_6(t) \leq \varepsilon_5(t)$, it is easy to see that $\varepsilon_6(t)$ is strictly increasing, that $\varepsilon_6(t) \to 0$ as $t \to 0$, that for all $\alpha >0$, we have that $\frac{t^\alpha}{\varepsilon_6(t)} \to 0$ as $t \to 0$, that $\varepsilon_6 (t) \leq \frac{K}{10} \: \varepsilon_6 (t^K)$ for $t<t_0$, and thinking of the geometric interpretation of the slopes, it is also easy to see that $\frac{\varepsilon_6(t)}{t}$ is decreasing in $t$.

Finally, $\widetilde{\varepsilon}(t) = (\varepsilon_6)^{2-d} (t)$ satisfies the required conditions.

\end{proof}

Putting together Lemmata \ref{CanAlwaysReduceToTheoremGeneralizedHausdorffMeasuresCaseA} and \ref{CanAlwaysReduceToTheoremGeneralizedHausdorffMeasuresCaseB}, and Theorem \ref{SharpExampleTheoremForGeneralizedHausdorffMeasures}, we readily get

\begin{thm}\label{SharpnessOfTheorems1.1inACMOUand2.5inACMOUDifferentSense}
\begin{enumerate}
\item[(a)] Consider the statement \eqref{abscont}, i.e. that for any compact set $E \subset \C$ and any $K$-quasiconformal mapping $\phi:\C \to \C$, we have that $$\H^d(E)=0\,\,\,\Longrightarrow\,\,\,\H^{d'}(\phi(E))=0,$$
with $d'=\frac{2Kd}{2+(K-1)d}$ and $0<d<2$. If such a statement is true, then it is sharp in the sense that no strengthening of the conclusion (in terms of Hausdorff gauge functions) is possible with the same hypothesis. More precisely, for any gauge function $h(t)$ such that $\frac{t^{d'}}{h(t)} \to 0$ as $t \to 0$, there exists a compact set $E \subset \C$ and a $K$-quasiconformal mapping $\phi:\C \to \C$ such that $\H^d(E)=0$ but $\H^{h(t)}(\phi(E))>0$.

Notice that the statement \eqref{abscont} is true in the particular case $d=\frac{2}{K+1}$, $d'=1$, (Theorem \ref{theorem1.1inACMOU}), which is the relevant case for removability (see Theorems \ref{theorems1.2and4.3inACMOU} and \ref{theorem5.1inACMOU}), and it is conjectured to be true for all $0<d<2$ (Question 4.4 in \cite{astalaareadistortion} and Conjecture 2.3 in \cite{astalaclopmateuorobitguriartepreprint}.)

\item[(b)] Consider the statement that for any compact set $E \subset \C$ and any $K$-quasiconformal mapping $\phi:\C \to \C$, we have that 
\begin{equation}\label{StatementSigmaFiniteGoesToSigmaFiniteInAllDimensions}
\H^d(E) \ is \ \sigma-finite \,\,\,\Longrightarrow\,\,\,\H^{d'}(\phi(E)) \ is \ \sigma-finite, 
\end{equation}
with $d'=\frac{2Kd}{2+(K-1)d}$ and $0<d<2$. If such a statement is true, then it is sharp in the sense that no weakening of the hypothesis (in terms of Hausdorff gauge functions) is possible with the same conclusion. More precisely, for any gauge function $g(t)$ such that $\frac{g(t)}{t^d} \to 0$ as $t \to 0$, there exists a compact set $E \subset \C$ and a $K$-quasiconformal mapping $\phi:\C \to \C$ such that $\H^{g(t)}(E)=0$ but $\H^{d'}(\phi(E))$ is non-$\sigma$-finite.

Notice that the statement \eqref{StatementSigmaFiniteGoesToSigmaFiniteInAllDimensions} is true in the particular case $d=\frac{2}{K+1}$, $d'=1$, (Theorem \ref{theorem2.5inACMOU}), which is the relevant case for removability, (see Theorems \ref{theorems1.2and4.3inACMOU} and \ref{theorem5.1inACMOU}), and we conjecture it is true for all $0<d<2$.
\end{enumerate}

\end{thm}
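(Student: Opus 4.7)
The plan is to deduce Theorem \ref{SharpnessOfTheorems1.1inACMOUand2.5inACMOUDifferentSense} by combining Theorem \ref{SharpExampleTheoremForGeneralizedHausdorffMeasures} with Lemmata \ref{CanAlwaysReduceToTheoremGeneralizedHausdorffMeasuresCaseA} and \ref{CanAlwaysReduceToTheoremGeneralizedHausdorffMeasuresCaseB} and the standard comparison theorem for generalized Hausdorff measures recalled just after \eqref{PartialOrderForGaugeFunctions1}--\eqref{PartialOrderForGaugeFunctions2}. For part (a), given a target gauge $h(t)=t^{d'}\varepsilon^{*}(t)$ with $t^{d'}/h(t)\to 0$ (equivalently $\varepsilon^{*}(t)\to\infty$), I would first replace $\varepsilon^{*}$ by $\min\{\varepsilon^{*}(t),\log(1/t)\}$; the resulting function still tends to infinity but now satisfies $t^{\alpha}\varepsilon^{*}(t)\to 0$ for every $\alpha>0$, and the corresponding target gauge can only have decreased. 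Then I would invert the source--target relation of Theorem \ref{SharpExampleTheoremForGeneralizedHausdorffMeasures}(a) by setting
\[
\varepsilon(s):=\bigl(\varepsilon^{*}(s^{1/K})\bigr)^{\frac{2+(K-1)d}{2}},
\]
so that the theorem's target gauge $t^{d'}\varepsilon^{\frac{2}{2+(K-1)d}}(t^{K})$ is precisely $t^{d'}\varepsilon^{*}(t)$. The function $\varepsilon$ still satisfies the hypotheses of case (a), so Lemma \ref{CanAlwaysReduceToTheoremGeneralizedHausdorffMeasuresCaseA} produces a strictly decreasing $\tilde\varepsilon\leq\varepsilon$ sharing these properties.

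Next I would apply Theorem \ref{SharpExampleTheoremForGeneralizedHausdorffMeasures} to the source gauge $t^{d}\tilde\varepsilon(t)$, obtaining $E$ and a $K$-quasiconformal $\phi$ with $\H^{t^{d}\tilde\varepsilon}(E)\approx 1$ and $\H^{\widetilde{h^{(T)}}}(\phi(E))>0$. Since $\tilde\varepsilon(t)\to\infty$ we have $t^{d}\tilde\varepsilon\prec t^{d}$ in the partial order \eqref{PartialOrderForGaugeFunctions1}--\eqref{PartialOrderForGaugeFunctions2}, and the comparison theorem yields $\H^{d}(E)=0$. On the target side, the resulting $\widetilde{h^{(T)}}$ satisfies $\widetilde{h^{(T)}}(t)\leq t^{d'}\varepsilon^{*}(t)=h(t)$ (because $\tilde\varepsilon\leq\varepsilon$), so $\H^{h}(\phi(E))\geq\H^{\widetilde{h^{(T)}}}(\phi(E))>0$, as required.

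For part (b), given $g(t)=t^{d}\varepsilon^{\sharp}(t)$ with $g(t)/t^{d}\to 0$ (equivalently $\varepsilon^{\sharp}(t)\to 0$), I would introduce the auxiliary source
\[
\varepsilon_{0}(t):=\sqrt{\varepsilon^{\sharp}(t)+\frac{1}{\log(1/t)}},
\]
which simultaneously satisfies $\varepsilon_{0}(t)\to 0$, the slow-variation condition $t^{\alpha}/\varepsilon_{0}(t)\to 0$ for every $\alpha>0$ (thanks to the $1/\log(1/t)$ floor), and $\varepsilon^{\sharp}(t)/\varepsilon_{0}(t)\leq\sqrt{\varepsilon^{\sharp}(t)}\to 0$. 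Lemma \ref{CanAlwaysReduceToTheoremGeneralizedHausdorffMeasuresCaseB} then upgrades $\varepsilon_{0}$ to $\tilde\varepsilon_{0}\geq\varepsilon_{0}$ satisfying all the monotonicity, concavity and logarithmic-type conditions of case (b), while the comparison $\varepsilon^{\sharp}/\tilde\varepsilon_{0}\leq\varepsilon^{\sharp}/\varepsilon_{0}\to 0$ is preserved. Applying Theorem \ref{SharpExampleTheoremForGeneralizedHausdorffMeasures} with source gauge $t^{d}\tilde\varepsilon_{0}$ produces $E$ and $\phi$ with $\H^{t^{d}\tilde\varepsilon_{0}}(E)\approx 1$ and $\H^{\widetilde{h^{(T)}}}(\phi(E))>0$ for $\widetilde{h^{(T)}}(t)=t^{d'}\tilde\varepsilon_{0}^{\frac{2}{2+(K-1)d}}(t)$. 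Since $t^{d}\tilde\varepsilon_{0}\prec g$ and $t^{d'}\prec\widetilde{h^{(T)}}$ (both relations following from the respective limits $\varepsilon^{\sharp}/\tilde\varepsilon_{0}\to 0$ and $\tilde\varepsilon_{0}\to 0$), the comparison theorem gives $\H^{g}(E)=0$ and that $\H^{d'}(\phi(E))$ is non-$\sigma$-finite.

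The main obstacle will be step one of each part: arranging that an arbitrary gauge $h$ or $g$ (which may approach $t^{d'}$ or $t^{d}$ at any rate) can be dominated by, or used to dominate, a gauge meeting the rigid slowly varying requirement $t^{\alpha}\varepsilon\to 0$ (respectively $t^{\alpha}/\varepsilon\to 0$) of Theorem \ref{SharpExampleTheoremForGeneralizedHausdorffMeasures}. The explicit substitutions $\min\{\varepsilon^{*},\log(1/t)\}$ and $\sqrt{\varepsilon^{\sharp}+1/\log(1/t)}$ do exactly this, and once one checks that the exponent $\frac{2+(K-1)d}{2}$ appearing in \eqref{DefinitionOfH(T)} preserves slow variation, Lemmata \ref{CanAlwaysReduceToTheoremGeneralizedHausdorffMeasuresCaseA} and \ref{CanAlwaysReduceToTheoremGeneralizedHausdorffMeasuresCaseB} together with Theorem \ref{SharpExampleTheoremForGeneralizedHausdorffMeasures} conclude the argument automatically.
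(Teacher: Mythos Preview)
Your proposal is correct and follows essentially the same route as the paper's own proof: in both parts you invert the source--target relation of Theorem \ref{SharpExampleTheoremForGeneralizedHausdorffMeasures}, invoke Lemma \ref{CanAlwaysReduceToTheoremGeneralizedHausdorffMeasuresCaseA} or \ref{CanAlwaysReduceToTheoremGeneralizedHausdorffMeasuresCaseB} to meet the technical hypotheses, and finish with the comparison theorem for Hausdorff measures. The only difference is cosmetic: where the paper says one may ``get closer to $t^{d'}$'' (resp.\ $t^{d}$) to force the slow-variation condition $t^{\alpha}\varepsilon\to 0$ (resp.\ $t^{\alpha}/\varepsilon\to 0$), you make this explicit via $\min\{\varepsilon^{*},\log(1/t)\}$ and $\sqrt{\varepsilon^{\sharp}+1/\log(1/t)}$, and the paper separates the square-root step from the monotonization step in part (b) whereas you merge them.

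One small point worth tightening in part (b): to conclude that $\H^{d'}(\phi(E))$ is non-$\sigma$-finite from the comparison theorem you need $\H^{\widetilde{h^{(T)}}}(\phi(E))$ to be not only positive but $\sigma$-finite. This is indeed the case (in case (b) of Theorem \ref{SharpExampleTheoremForGeneralizedHausdorffMeasures} one actually has $\widetilde{h^{(T)}}\approx h^{(T)}$ and hence $0<\H^{\widetilde{h^{(T)}}}(\phi(E))<\infty$, as noted at the end of that proof), but you should state it. Also check that $t^{d}\varepsilon_{0}(t)$ is a gauge function before feeding it to Lemma \ref{CanAlwaysReduceToTheoremGeneralizedHausdorffMeasuresCaseB}; if $\varepsilon^{\sharp}$ is not monotone this is not automatic, and the paper handles it by first passing to an increasing majorant (your $1/\log(1/t)$ floor nearly does this, but a one-line monotonization as in the paper is safest).
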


\begin{proof}
For case $(a)$, given a gauge function $h(t)$ such that $\frac{t^{d'}}{h(t)} \to 0$ as $t \to 0$, we can assume, if necessary by ``getting closer" to $t^{d'}$, that for all $\alpha>0$, we have that $t^\alpha \: \frac{h(t)}{t^{d'}} \to 0$ as $t \to 0$. Define $\varepsilon (t)$ by the condition
\begin{equation}
\varepsilon^{ \frac{2}{2+(K-1)d} } (t^K) = \frac{h(t)}{t^{d'}} 
\end{equation}
it is then easy to see (thinking of compositions with functions of the type $t \to t^\beta$ for $\beta >0$) that $\varepsilon (t) \to \infty$ as $t \to 0$; that for all $\alpha>0$, $t^\alpha \varepsilon (t) \to 0$ as $t \to 0$; and that $t^d \varepsilon (t)$ is increasing in $t$ (since $h(t)$ is a gauge function.) Apply now Lemma \ref{CanAlwaysReduceToTheoremGeneralizedHausdorffMeasuresCaseA} and Theorem \ref{SharpExampleTheoremForGeneralizedHausdorffMeasures}.

For case $(b)$, given $g(t)$, we can assume, if necessary by getting ``closer" to $t^d$ that for all $\alpha>0$, we have that $\frac{t^\alpha}{ \delta_1 (t)} \to 0$ as $t \to 0$, where $\delta_1 (t) = \frac{g(t)}{t^d}$. As in the proof of Lemma \ref{CanAlwaysReduceToTheoremGeneralizedHausdorffMeasuresCaseB}, take $\delta_2 (t) \geq \delta_1 (t)$ so that $\delta_2 (t)$ is strictly increasing and $\delta_2 (t) \to 0$ as $t \to 0$. Now take $\delta_3 (t) = \sqrt{\delta_2 (t)}$, and apply to $t \to t^d \delta_3 (t)$ Lemma \ref{CanAlwaysReduceToTheoremGeneralizedHausdorffMeasuresCaseB} and Theorem \ref{SharpExampleTheoremForGeneralizedHausdorffMeasures}. The result follows by standard comparison theorems for Hausdorff measures (see right below \eqref{PartialOrderForGaugeFunctions2}.)

\end{proof}

\begin{remark}\label{NotPossibleToProveShaprnessByOneSingleExampleZeroGoesToZero}
One might wonder if it is possible to prove Theorem \ref{SharpnessOfTheorems1.1inACMOUand2.5inACMOUDifferentSense} $(a)$ ``in one shot", without having to resource to comparison theorems for Hausdorff measures (as in Theorem \ref{TheoremConstructionForUsualHausdorffMeasures} and Corollary \ref{Theorem1.1inACMOUSharpInSenseOfRelaxingHypothesis}). More precisely, whether a compact set $E$ would exist so that $\H^d(E)=0$ and $\H^{d'}(\phi E)=0$ for a certain $K$-quasiconformal mapping $\phi$, but so that $\H^{h}(\phi E)=\infty$ for all gauge functions $h(t)$ such that $\frac{t^{d'}}{h(t)} \to 0$ as $t \to 0$. This happens to be impossible due to a theorem of Besicovitch (\cite{rogershausdorffmeasures}, Theorem 42), which says that for any compact set $E$ such that $\H^f(E)=0$ for some gauge function $f$, there exists another gauge function $g$ with $g \prec f$ so that $\H^g(E)=0$.

\end{remark}

The sharp examples presented in this paper should help as test cases in terms of understanding the following Conjectures.

\begin{conjecture}\label{AstalaConjecture}\textbf{[Question 4.4. in \cite{astalaareadistortion}, Conjecture 2.3 in \cite{astalaclopmateuorobitguriartepreprint}]}
If $E \subset \C$ is a compact set, $\phi$ is a planar $K$-quasiconformal mapping, $0<d<2$ and $d'=\frac{2Kd}{2+(K-1)d}$, then 
\begin{equation}\label{abscont2}
\H^d(E)=0\,\,\,\Longrightarrow\,\,\,\H^{d'}(\phi(E))=0,
\end{equation}
or equivalently, $\phi^\ast\H^{d'}\ll\H^d$.
\end{conjecture}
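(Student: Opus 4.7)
The plan is to attack the conjecture via the Beltrami equation and weighted estimates for the Beurling transform $S$. Recall that if $\phi$ is $K$-quasiconformal with suitable normalization, its Beltrami coefficient $\mu = \phi_{\bar z}/\phi_z$ satisfies $\|\mu\|_\infty \leq k := (K-1)/(K+1)$, and the derivative $h := \phi_{\bar z}$ is the unique $L^2$ solution of $(I-\mu S)h = \mu$. Neumann expansion gives $\phi_z = 1 + Sh = \sum_{n\geq 0} (S\mu)^n \cdot 1$, so the pushforward of measures by $\phi$, and therefore the distortion of Hausdorff content, is governed by the mapping properties of $S$ and $(I-\mu S)^{-1}$ in suitably weighted spaces.

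First I would recast $\H^d(E)=0$ in capacitary language. By Frostman's lemma, $\H^d(E)>0$ iff $E$ supports a nonzero positive measure $\nu$ with $\nu(B(x,r))\leq r^d$ for all $x,r$, so it suffices to show that any putative nontrivial growth-$d'$ measure on $\phi(E)$ transports under $\phi^{-1}$ to a nontrivial growth-$d$ measure on $E$, contradicting $\H^d(E)=0$. This transport rule can be read off from the Jacobian $J_\phi = |\phi_z|^2 - |\phi_{\bar z}|^2$ together with the series expansion above, provided one has adequate control of $\phi_z$ tested against the relevant Frostman weights.

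The core step is to establish a two-weight bound of the form
\[
\int_\C |Sf|^2\, w\, dA \;\leq\; C \int_\C |f|^2\, \sigma\, dA,
\]
where $w,\sigma$ are capacitary weights tied to $\H^{d'}_\infty$ on the target and $\H^d_\infty$ on the source, linked through $\phi$. I would try to verify this by the Sawyer--Nazarov--Treil--Volberg testing philosophy, checking the inequality on characteristic functions of disks against Frostman equilibrium measures, and using a dyadic model (Petermichl's random shifts) for $S$. The known case $d'=1$ in Theorem \ref{theorem1.1inACMOU} corresponds to using analytic capacity as the capacitary tool on the target, via David's and Tolsa's theory; for general $d'\in(0,2)$ one needs a Riesz-type fractional substitute, perhaps via a $T1$-style argument adapted to non-doubling measures of prescribed growth.

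The main obstacle is the sharpness imposed by the Cantor examples of Sections \ref{ConstructionForUsualHausdorffMeasures} and \ref{ConstructionForGeneralizedHausdorffMeasures}: equality holds in \eqref{SimplificationOfTargetEquationToArea} and \eqref{CoveringByBuildingBlocksConditionAtScaleNForSourceAndTarget} up to the product $\prod_n(1-\varepsilon_n)\approx 1$, so no loss can be tolerated at the critical exponent. Any successful argument must therefore respect the algebraic identity \eqref{EquationSigmaR} underlying these extremal sets, and the capacitary weight $w$ must encode the area-based uniform distribution of children highlighted in \eqref{SimplificationOfTargetEquationToArea}. A complementary attack I would pursue in parallel is interpolation between the known endpoints $d'=1$ and $d'=2$ (Astala's area distortion $|\phi(E)|\leq C|E|^{1/K}$), viewing both as two-weight estimates for $S$; the obstruction there is that Hausdorff measure is not itself the norm of a natural interpolation scale, so one must substitute Choquet integrals against $\H^s_\infty$ and verify that the Beurling transform interpolates cleanly on that scale.
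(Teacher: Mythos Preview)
The statement you are addressing is labeled as a \emph{Conjecture} in the paper, and the paper does not prove it. The paper's contribution is orthogonal: it constructs extremal examples (Theorems \ref{TheoremConstructionForUsualHausdorffMeasures} and \ref{SharpExampleTheoremForGeneralizedHausdorffMeasures}) showing that \emph{if} the implication \eqref{abscont2} holds, then it is sharp in both the ``sharp hypothesis'' and ``sharp conclusion'' senses. The only case of \eqref{abscont2} actually established is $d'=1$, and that is quoted from \cite{astalaclopmateuorobitguriartepreprint} (Theorem \ref{theorem1.1inACMOU} here), not proved in this paper. So there is no paper proof to compare your proposal against.

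Your proposal itself is not a proof but a research program: you sketch the Beltrami/Neumann-series setup, propose a two-weight inequality for the Beurling transform with Frostman-type weights, invoke Sawyer--NTV testing, and flag interpolation between the $d'=1$ and $d'=2$ endpoints as a parallel attack. Each of these is a reasonable heuristic, and the two-weight/Beurling route is indeed the natural line along which the problem was eventually attacked, but as written nothing is verified. The concrete gap is that you never establish the claimed two-weight bound, nor do you specify the weights $w,\sigma$ precisely enough to even state a checkable testing condition; the phrase ``capacitary weights tied to $\H^{d'}_\infty$ on the target and $\H^d_\infty$ on the source, linked through $\phi$'' hides exactly the difficulty, since $\phi$ is the unknown object whose distortion you are trying to control. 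Likewise the interpolation idea founders on the point you yourself raise: Hausdorff measures do not sit in a standard interpolation scale, and replacing them by Choquet integrals against $\H^s_\infty$ does not obviously repair this. In short, you have identified plausible machinery but have not closed any step; the statement remains, as the paper presents it, a conjecture.
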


Although perhaps somewhat optimistic, we think it is reasonable to conjecture that, given that the examples we presented are sharp for the cases in which Conjecture \ref{AstalaConjecture} is known to be true, then maybe they are also sharp for the other cases, i.e.

\begin{conjecture}\label{ConjectureSigmaFiniteGoesToSigmaFinite}
If $E \subset \C$ is a compact set, $\phi$ is a planar $K$-quasiconformal mapping, $0<d<2$ and $d'=\frac{2Kd}{2+(K-1)d}$ then
\begin{equation}\label{EquationConjectureSigmaFiniteGoesToSigmaFinite}
\H^d(E) \ is \ \sigma-finite \ \Longrightarrow \H^{d'}(\phi(E))  \ is \ \sigma-finite 
\end{equation}
\end{conjecture}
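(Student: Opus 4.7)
The plan is to mimic the route used by Astala, Clop, Mateu, Orobitg and the author for Theorem \ref{theorem2.5inACMOU} (the case $d=\tfrac{2}{K+1}$, $d'=1$) and attempt to extend it to general $0<d<2$. First I would reduce to the finite measure case: write $E=\bigcup_n E_n$ with $\H^d(E_n)<\infty$; since sigma-finiteness of $\H^{d'}$ is preserved under countable unions, it suffices to show that $\H^d(E)<\infty$ implies $\H^{d'}(\phi(E))$ is sigma-finite (ideally, even finite). Given such $E$, I would take an almost-optimal countable covering $\{B(z_i,r_i)\}$ with $\sum r_i^d \lesssim \H^d(E)$, and try to show that $\sum_i (\operatorname{diam}\phi(B(z_i,r_i)))^{d'}$ is controlled, perhaps after discarding a thin exceptional set on which $\phi$ behaves wildly.

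The quantitative engine would be Astala's area distortion $|\phi(A)|\leq C|A|^{1/K}$ together with a Frostman-type duality: associate to $E$ a probability measure $\mu$ with $\mu(B(x,r))\lesssim r^d$, push it forward by $\phi$, and try to establish $\phi_*\mu(B(y,s))\lesssim s^{d'}$ for $s>0$ (up to a negligible set). Combined with the converse direction of Frostman's lemma this would give $\H^{d'}(\phi(E))\lesssim 1$. The sharp Cantor sets of Theorem \ref{TheoremConstructionForUsualHausdorffMeasures}, built with the calibration $\sigma^{dK}=R^{2-d}$, would serve as non-trivial test cases: any candidate argument must specialize correctly to them, and in particular the area-packing identity \eqref{SimplificationOfTargetEquationToArea} is exactly what makes $\phi_*\mu$ behave Frostman-like in those examples.

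The main obstacle, and the reason the statement is only a conjecture, is that the ACMOU treatment of $d'=1$ relied essentially on two endpoint ingredients which do not have clean analogs at general $d'$: first, Stoilow factorization combined with Tolsa's semiadditivity of analytic capacity at dimension $1$, which allows one to trade quasiconformal distortion for analytic removability; and second, the $W^{1,p}$ regularity of $\phi$ for $p<\tfrac{2K}{K-1}$, which is tight precisely at the borderline image dimension $1$. For $d'\neq 1$ one needs either a capacity theory at fractional dimensions adapted to $K$-quasiregular maps (Bessel capacities, as in Remark \ref{RemarkOnConsequenceForBesselCapacities}, only give partial information, and their failure to detect $\H^1$-positive sets is exactly what Corollary \ref{Theorem1.1inACMOUSharpInSenseOfRelaxingHypothesis} exploits) or a direct proof of Conjecture \ref{AstalaConjecture} in the $\H^d$-null case, from which the sigma-finite version would follow by standard exhaustion.

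I therefore expect the hardest step to be precisely closing the gap between the packing/covering estimates one can extract from Astala's area distortion, formulated at the level of the measure $\phi_*\mu$, and a Hausdorff-measure statement that remains sharp under the extremal configurations constructed in this paper. A reasonable first subcase to attempt would be the range $0<d<\tfrac{2}{K+1}$ (so $0<d'<1$), where the analytic-capacity endpoint is replaced by the simpler Painlev\'e-type bound $\H^{d'}(F)=0 \Rightarrow F$ removable for Lipschitz $K$-quasiregular functions, potentially allowing an argument in the spirit of David's Theorem adapted by Stoilow factorization; the opposite range $\tfrac{2}{K+1}<d<2$ appears considerably more delicate because no analogue of Tolsa's semiadditivity is currently available at dimensions $d'>1$.
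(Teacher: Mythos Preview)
The statement you were asked to prove is not a theorem in the paper: it is Conjecture~\ref{ConjectureSigmaFiniteGoesToSigmaFinite}, stated without proof and explicitly flagged as open. The paper offers no argument for it beyond noting that the case $d=\tfrac{2}{K+1}$, $d'=1$ is Theorem~\ref{theorem2.5inACMOU} and that the sharp examples of Theorem~\ref{TheoremConstructionForUsualHausdorffMeasures} and Corollary~\ref{Theorem2.5inACMOUSharpInSenseOfStrengtheningConclusion} are consistent with it. So there is no ``paper's own proof'' to compare your proposal against.

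Your write-up is not a proof either, and you acknowledge as much: it is a survey of possible strategies (Frostman measures, push-forward growth bounds, Stoilow factorization plus capacity theory) together with an honest accounting of why each runs into trouble away from $d'=1$. That diagnosis is accurate and matches the state of affairs in the paper. In particular, your identification of the two endpoint ingredients that make the $d'=1$ case work---Tolsa's semiadditivity and the borderline Sobolev exponent $\tfrac{2K}{K-1}$---and the absence of fractional-dimension substitutes is exactly the obstruction the paper leaves unresolved. The only misleading phrasing is your opening ``The plan is to mimic the route\ldots and attempt to extend it'': this suggests a proof is forthcoming, when in fact what follows is an explanation of why no such extension is currently available. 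If the assignment was to supply a proof, the correct answer is that none is known; if it was to assess the problem, your discussion is on target.
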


and also

\begin{conjecture}\label{ConjectureLlogL}
If $E \subset \C$ is a compact set, $\phi$ is a planar $K$-quasiconformal mapping, $0<d<2$ and $d'=\frac{2Kd}{2+(K-1)d}$ and 
$S(t)=t^d \log^\beta \left( \frac{1}{t} \right)$ for $\beta \neq 0$ (positive or negative), and $T(t) = t^{d'} \log^{ \frac{2\beta}{2+(K-1)d} } \left( \frac{1}{t} \right) $, then
\begin{equation}\label{EquationConjectureLlogL}
\H^{S}(E) =0 \,\,\,\Longrightarrow \,\,\, \H^{T}(\phi E) =0.
\end{equation}
\end{conjecture}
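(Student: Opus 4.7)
The natural strategy is to adapt the proof of Theorem \ref{theorem1.1inACMOU} from \cite{astalaclopmateuorobitguriartepreprint} so as to keep precise track of logarithmic factors. Concretely, I would first use the Stoilow factorization $\phi = F \circ \phi_0$ together with the principal solution formalism to reduce to a mapping whose Beltrami coefficient has compact support and $\|\mu\|_\infty \leq (K-1)/(K+1)$. For any efficient cover $\{D(z_i, r_i)\}$ of $E$ with $\sum S(2r_i) < \epsilon$, the goal is then to produce a cover of $\phi(E)$ by disks $D(w_i, \rho_i)$ with $\sum T(2\rho_i) \lesssim_{K,\beta} \epsilon$.

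The main step would be a weighted, quantitatively sharp area-distortion inequality. Using Astala's area-distortion theorem together with the exponential integrability of $\log|D\phi|$ (so that $|D\phi|$ sits at the borderline of the sharp Sobolev exponent $p = 2K/(K-1)$), one should aim to show, for a disk $B$ of radius $r$,
\begin{equation*}
|\phi(B)|^{d'/2}\,\log^{\frac{2\beta}{2+(K-1)d}}\!\left(\frac{1}{\diam\phi(B)}\right) \;\lesssim\; |B|^{d/2}\,\log^\beta\!\left(\frac{1}{r}\right).
\end{equation*}
The exponent $2/(2+(K-1)d)$ is natural: it is precisely the interpolation exponent between the trivial area scaling and the sharp H\"older exponent $1/K$, and it is consistent with (and forced by) the algebraic identity \eqref{EquationSigmaR}, as the sharp examples constructed in this paper demonstrate. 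Summing over the cover and passing from single-disk estimates to a global one by a Vitali--Besicovitch argument would then yield the conjectured implication. In the critical case $d = 2/(K+1)$, $d' = 1$, one could further invoke the Kaufman--Kr\'al characterization of $\H^1$-removability for $BMO$ analytic functions, in parallel to Corollary \ref{Corollary4.1inACMOU}.

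The main obstacle is that the pure power version (Conjecture \ref{AstalaConjecture}) is open for $d' \neq 1$, so Conjecture \ref{ConjectureLlogL} in full generality is at least as hard. I would first attack the case $d = 2/(K+1)$, $d' = 1$, where the tools of \cite{astalaclopmateuorobitguriartepreprint} (analytic capacity, Tolsa's theorem, curvatures of measures) are available and should admit a weighted extension producing the target log-exponent $\beta(K+1)/(2K)$. For general $0 < d < 2$, the realistic aim would be a conditional result: assuming Conjecture \ref{AstalaConjecture}, deduce Conjecture \ref{ConjectureLlogL} by interpolating between the exponents that bracket the sharp Sobolev regularity $p = 2K/(K-1)$, using the exponential integrability of the Jacobian to generate the logarithmic weight with precisely the right power. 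An unconditional proof for $d' \neq 1$ would require a genuinely new attack on the power case first.
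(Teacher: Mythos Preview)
The statement you are addressing is a \emph{conjecture} in the paper, not a theorem; the paper offers no proof of it whatsoever. So there is nothing to compare your argument against. The paper merely formulates Conjecture \ref{ConjectureLlogL} as a natural extension of Conjecture \ref{AstalaConjecture} (Astala's question on absolute continuity of Hausdorff measures), motivated by the sharp examples built in Sections \ref{ConstructionForUsualHausdorffMeasures} and \ref{ConstructionForGeneralizedHausdorffMeasures}, and then remarks that the single special case $d=\tfrac{2}{K+1}$, $\beta<0$ has been established by Clop in his thesis.

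Your write-up is honest about this: you correctly identify that the general case is at least as hard as the open Conjecture \ref{AstalaConjecture}, and you frame the realistic goal as either attacking the critical case $d'=1$ with the analytic-capacity machinery of \cite{astalaclopmateuorobitguriartepreprint}, or proving a conditional result modulo Conjecture \ref{AstalaConjecture}. That is a reasonable research outline, but it is not a proof, and the paper does not claim one either. The displayed weighted area-distortion inequality you propose as the ``main step'' is itself an open-ended heuristic: no mechanism in \cite{astalaclopmateuorobitguriartepreprint} currently produces logarithmic corrections with the precise exponent $\tfrac{2\beta}{2+(K-1)d}$, and the exponential integrability of $\log|D\phi|$ does not by itself pin down that power. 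So while your strategy is plausible in spirit, the key analytic step remains entirely conjectural.
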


Note that Conjecture \ref{ConjectureLlogL} has been proven for $d=\frac{2}{K+1}$ and $\beta <0$ by Clop (\cite{clopthesis} p.69.) We thank him for this comment.

Of course one could formulate the corresponding conjectures for gauge functions of the type $h(t)= t^d \log^\beta \left( \frac{1}{t} \right) \left[ \log \log \left( \frac{1}{t} \right) \right]^\gamma $, etc. These ``$L^\alpha \left( \log L \right)^\beta$" gauge functions appear naturally in the context of mappings of finite distortion.

Note that in this paper we also recover Theorem 5.1 in \cite{astalaclopmateuorobitguriartepreprint}, even without the need for the auxiliary function $v(t)$ appearing in (5.10) in that paper. However, this comment is somewhat vacuous, since the construction in the present paper is based on the proof of Theorem 5.1 in \cite{astalaclopmateuorobitguriartepreprint}, but introduces a good number of modifications which makes it substantially more technical than Theorem 5.1 in \cite{astalaclopmateuorobitguriartepreprint}.

The construction presented here has applications to similar sharpness and removability problems in the contexts of H\"{o}lder continuous $K$-quasiconformal mappings (see \cite{clopremovablesingularitiesholderquasiregular} and \cite{clopnonremovablesetsholderquasiregular}) and mappings of finite distortion, among other contexts. We expect these applications to appear in a forthcoming paper.

%anyadido por m'i
\bibliographystyle{alpha}
\bibliography{references}

\newcommand{\etalchar}[1]{$^{#1}$}
\def\cprime{$'$} \def\cprime{$'$} \def\cprime{$'$}
\begin{thebibliography}{ACM{\etalchar{+}}}

\bibitem[ACM{\etalchar{+}}]{astalaclopmateuorobitguriartepreprint}
Kari Astala, Albert Clop, Joan Mateu, Joan Orobitg, and Ignacio Uriarte-Tuero.
\newblock Distortion of {H}ausdorff measures and improved {P}ainlev\'{e}
  removability for bounded quasiregular mappings.
\newblock {\em Duke Math. J.}, [to appear].

\bibitem[AH96]{adamshedberg}
David~R. Adams and Lars~Inge Hedberg.
\newblock {\em Function spaces and potential theory}, volume 314 of {\em
  Grundlehren der Mathematischen Wissenschaften [Fundamental Principles of
  Mathematical Sciences]}.
\newblock Springer-Verlag, Berlin, 1996.

\bibitem[Ahl47]{ahlforsboundedanalyticfunctions}
Lars~V. Ahlfors.
\newblock Bounded analytic functions.
\newblock {\em Duke Math. J.}, 14:1--11, 1947.

\bibitem[Ahl66]{ahlfors}
Lars~V. Ahlfors.
\newblock {\em Lectures on quasiconformal mappings}.
\newblock Manuscript prepared with the assistance of Clifford J. Earle, Jr. Van
  Nostrand Mathematical Studies, No. 10. D. Van Nostrand Co., Inc., Toronto,
  Ont.-New York-London, 1966.

\bibitem[AIM]{astalaiwaniecmartin}
Kari Astala, Tadeusz Iwaniec, and Gaven Martin.
\newblock {\em Elliptic Partial Differential Equations and Quasiconformal
  Mappings in the Plane}.
\newblock [In preparation].

\bibitem[Ast]{astalapersonalcommunication}
Kari Astala.
\newblock Personal communication.

\bibitem[Ast94]{astalaareadistortion}
Kari Astala.
\newblock Area distortion of quasiconformal mappings.
\newblock {\em Acta Math.}, 173(1):37--60, 1994.

\bibitem[Bis]{bishopdistortionofdisksbyconformalmaps}
Christopher~J. Bishop.
\newblock Distortion of disks by conformal maps.
\newblock {\em [Preprint]}.

\bibitem[Cal77]{calderonlipschitzcurves}
A.-P. Calder{\'o}n.
\newblock Cauchy integrals on {L}ipschitz curves and related operators.
\newblock {\em Proc. Nat. Acad. Sci. U.S.A.}, 74(4):1324--1327, 1977.

\bibitem[Clo06]{clopthesis}
Albert Clop.
\newblock {\em Removable singularities for planar quasiregular mappings},
  volume 9/2006 of {\em Tesis Doctorals, Departament de Matem\'{a}tiques,
  Universitat Aut\'{o}noma de Barcelona}.
\newblock Departament de Matem\'{a}tiques, Universitat Aut\'{o}noma de
  Barcelona, Barcelona, 2006.

\bibitem[Clo07a]{clopnonremovablesetsholderquasiregular}
Albert Clop.
\newblock Nonremovable sets for {H}\"{o}lder continuous quasiregular mappings
  in the plane.
\newblock {\em Michigan Math. J.}, 55(1):195--208, 2007.

\bibitem[Clo07b]{clopremovablesingularitiesholderquasiregular}
Albert Clop.
\newblock Removable singularities for {H}\"older continuous quasiregular
  mappings in the plane.
\newblock {\em Ann. Acad. Sci. Fenn. Math.}, 32(1):171--178, 2007.

\bibitem[Dav98]{davidunrectifiable1setszeroanalyticcapacity}
Guy David.
\newblock Unrectifiable {$1$}-sets have vanishing analytic capacity.
\newblock {\em Rev. Mat. Iberoamericana}, 14(2):369--479, 1998.

\bibitem[Gar49]{garabedian}
P.~R. Garabedian.
\newblock Schwarz's lemma and the {S}zeg\"{o} kernel function.
\newblock {\em Trans. Amer. Math. Soc.}, 67:1--35, 1949.

\bibitem[Hut81]{hutchinson}
John~E. Hutchinson.
\newblock Fractals and self-similarity.
\newblock {\em Indiana Univ. Math. J.}, 30(5):713--747, 1981.

\bibitem[IM93]{iwaniecmartinquasiregularevendimensions}
Tadeusz Iwaniec and Gaven Martin.
\newblock Quasiregular mappings in even dimensions.
\newblock {\em Acta Math.}, 170(1):29--81, 1993.

\bibitem[IM01]{iwaniecmartin}
Tadeusz Iwaniec and Gaven Martin.
\newblock {\em Geometric function theory and non-linear analysis}.
\newblock Oxford Mathematical Monographs. The Clarendon Press Oxford University
  Press, New York, 2001.

\bibitem[Kau82]{kaufman}
Robert Kaufman.
\newblock Hausdorff measure, {BMO}, and analytic functions.
\newblock {\em Pacific J. Math.}, 102(2):369--371, 1982.

\bibitem[Kr{\'a}84]{kral}
Josef Kr{\'a}l.
\newblock Semielliptic singularities.
\newblock {\em \v Casopis P\v est. Mat.}, 109(3):304--322, 1984.

\bibitem[LV73]{lehtovirtanen}
O.~Lehto and K.~I. Virtanen.
\newblock {\em Quasiconformal mappings in the plane}.
\newblock Springer-Verlag, New York, second edition, 1973.
\newblock Translated from the German by K. W. Lucas, Die Grundlehren der
  mathematischen Wissenschaften, Band 126.

\bibitem[Mat95]{mattila}
Pertti Mattila.
\newblock {\em Geometry of sets and measures in {E}uclidean spaces}, volume~44
  of {\em Cambridge Studies in Advanced Mathematics}.
\newblock Cambridge University Press, Cambridge, 1995.
\newblock Fractals and rectifiability.

\bibitem[MM88]{martinmattilakdimensional}
Miguel~{\'A}ngel Mart{\'{\i}}n and Pertti Mattila.
\newblock {$k$}-dimensional regularity classifications for {$s$}-fractals.
\newblock {\em Trans. Amer. Math. Soc.}, 305(1):293--315, 1988.

\bibitem[Mor56]{mori}
Akira Mori.
\newblock On an absolute constant in the theory of quasi-conformal mappings.
\newblock {\em J. Math. Soc. Japan}, 8:156--166, 1956.

\bibitem[Rog98]{rogershausdorffmeasures}
C.~A. Rogers.
\newblock {\em Hausdorff measures}.
\newblock Cambridge Mathematical Library. Cambridge University Press,
  Cambridge, 1998.
\newblock Reprint of the 1970 original, With a foreword by K. J. Falconer.

\bibitem[Tol03]{tolsasemiadditivityanalyticcapacity}
Xavier Tolsa.
\newblock Painlev\'e's problem and the semiadditivity of analytic capacity.
\newblock {\em Acta Math.}, 190(1):105--149, 2003.

\bibitem[Tol05]{tolsabilipschitzinvarianceanalyticcapacity}
Xavier Tolsa.
\newblock Bilipschitz maps, analytic capacity, and the {C}auchy integral.
\newblock {\em Ann. of Math. (2)}, 162(3):1243--1304, 2005.

\bibitem[Wil70]{willard}
Stephen Willard.
\newblock {\em General topology}.
\newblock Addison-Wesley Publishing Co., Reading, Mass.-London-Don Mills, Ont.,
  1970.

\end{thebibliography}

\vskip 1cm
\begin{itemize}
%\item[]{Department of Mathematics and Statistics, University of Helsinki, P.O.
%Box 68, FIN-00014, University of Helsinki, Finland\\
%{\it E-mail address:} { kari.astala@helsinki.fi}}

%\item[]{Departament de Matem\`atiques, Facultat de Ci\`encies, Universitat Aut\`onoma de Barcelona, 08193-Bellaterra, Barcelona, Catalonia\\
%{\it E-mail address:} {albertcp@mat.uab.cat}, {mateu@mat.uab.cat}, {orobitg@mat.uab.cat}}

\item[]{Mathematics Department, 202 Mathematical Sciences Bldg., University of Missouri, Columbia, MO 65211-4100, USA 
{\it E-mail address:} {ignacio@math.missouri.edu}}
\end{itemize}

\end{document}